\documentclass{article}
\usepackage[utf8]{inputenc} 
\usepackage[T1]{fontenc}    
\usepackage[english]{babel}

\usepackage{amsfonts}
\usepackage{nicefrac}       
\usepackage{microtype}      
\usepackage{lmodern}
\usepackage{amssymb,amsmath,amsthm}

\usepackage{stmaryrd}
\SetSymbolFont{stmry}{bold}{U}{stmry}{m}{n}

\usepackage{bbm,bm}
\usepackage{latexsym}
\usepackage{xcolor}

\usepackage{enumerate}
\usepackage{verbatim}
\usepackage{booktabs}       

\usepackage{url}            
\usepackage[colorlinks=true]{hyperref}
\usepackage[numbers,sort&compress,square,comma]{natbib}
\usepackage[capitalise]{cleveref}

\usepackage{geometry}
\usepackage{parskip}
\makeatletter
\def\thm@space@setup{
  \thm@preskip=\parskip
  \thm@postskip=0pt
}
\makeatother

\usepackage[short]{optidef}
\usepackage{algorithmic,algorithm}

\usepackage{thmtools}

\declaretheorem{theorem}
\declaretheorem{corollary}
\declaretheorem{lemma}

\declaretheorem{proposition}

\declaretheoremstyle[qed=$\square$]{definitionwithend}
\declaretheorem[style=definitionwithend]{definition}

\declaretheorem[style=definitionwithend]{example}
\declaretheorem[style=definitionwithend]{remark}

\crefname{assumption}{Assumption}{Assumptions}
\crefname{conjecture}{Conjecture}{Conjectures}
\crefname{fact}{Fact}{Facts}

\newcommand{\abs}[1]{\ensuremath{\left\lvert #1 \right\rvert}}

\newcommand{\by}{\times}
 
\newcommand{\norm}[1]{\ensuremath{\left\lVert #1 \right\rVert}}
\newcommand{\ip}[1]{\ensuremath{\left\langle #1 \right\rangle}}
\newcommand{\grad}{\ensuremath{\nabla}}

\newcommand{\set}[1]{\left\{#1\right\}}

\newcommand{\bs}{\boldsymbol}

\newcommand{\bbm}{\mathbbm}

\def\C{{\mathbb{C}}}

\def\R{{\mathbb{R}}}
\def\S{{\mathbb{S}}}

\def\cI{{\cal I}}

\def\cL{{\cal L}}

\def\cP{{\cal P}}
\def\cQ{{\cal Q}}
\def\cR{{\cal R}}

\DeclareMathOperator{\Diag}{Diag}
\DeclareMathOperator{\diag}{diag}
\DeclareMathOperator{\tr}{tr}

\DeclareMathOperator{\range}{range}

\DeclareMathOperator{\spann}{span}

\newcommand{\framedheader}[3]{
  \framebox[\textwidth]{
    \vbox{
      \vspace{2mm}
      \hbox to \textwidth {\hspace{1em}\today \hfill #1\hspace{1em}}
      \vspace{4mm}
      \hbox to \textwidth {\hfill \Large{#2} \hfill}
      \vspace{2mm}
    }
  }
  \vspace*{4mm}
}
\usepackage{soul}

\input{alex-dots}

\let\bs\undefined
\let\by\undefined
\let\bf\undefined

\newcommand{\bf}{\mathbf{f}}
\newcommand{\bg}{\mathbf{g}}
\newcommand{\bh}{\mathbf{h}}
\newcommand{\bs}{\mathbf{s}}
\newcommand{\bo}{\bbm{1}}

\newcommand{\bx}{\mathbf{x}}
\newcommand{\by}{\mathbf{y}}

\DeclareMathOperator{\sym}{sym}

\begin{document}

\title{A Unified Theory of H-Duality in First-Order Methods}
\author{
    Kevin Shu\footnote{University of Waterloo, Combinatorics and Optimization, \texttt{k6shu@uwaterloo.ca}}
    \and
    Alex L.\ Wang\footnote{Purdue University, Daniels School of Business, \texttt{wang5984@purdue.edu}}
}
	\date{\today}
\maketitle

\begin{abstract}
    We provide two complementary explanations of H-duality in smooth strongly convex optimization and contractive fixed-point problems.
H-duality refers to the phenomenon where the worst-case performance of many fixed-step first-order methods (FSFOMs) 
for a given performance setup
are exactly equal to the worst-case performance of their ``time-reversed'' counterparts and a paired performance setup.
This concept gained interest
as a mechanism for automatically converting an algorithm
with guarantees on the final suboptimality to algorithms with guarantees on the final gradient norm.
Since its initial discovery, additional H-dual performance setup pairings have been discovered empirically, 
yet a general explanation for these phenomena remained elusive.

Our first explanation of H-duality shows that, on a class of instances with extremal curvature properties, FSFOMs produce a final iterate that is a scalar multiple of the initial iterate, and that this scalar is invariant under the time reversal of the underlying FSFOM.
This, together with the empirical observation that many FSFOMs have such an instance with extremal curvature as their worst case, gives one explanation for why H-duality often occurs.
The second explanation is complementary and uses the notion of a performance estimation certificate of the convergence rate of a FSFOM. 
We exhibit an explicit transformation of such certificates that preserves many of the properties that define such a certificate, and we show that in many numerical and analytical examples, this transformation in fact transforms certificates of the convergence rate of an algorithm into certificates for its time-reversal.
As an application, we prove a convergence guarantee for the H-dual of ITEM matching the conjectured optimal gradient-to-gradient guarantee.

\end{abstract}

\section{Introduction}

This paper offers two explanations of H-duality~\cite{Kim2024-Hduality,kim2023mirror}, a recurring empirically observed phenomenon~\cite{kim2016optimized,grimmer2024composing,yoon2026theory,kim2024proof,Kim2021gradient} in first-order methods for smooth (strongly) convex optimization and contractive fixed-point problems.
For simplicity, this introduction focuses on smooth convex optimization while our results cover both settings.

H-duality was first observed as a numerical phenomenon in~\cite{Kim2021gradient,kim2016optimized}.
\citet{kim2016optimized} built on the the performance estimation problem (PEP) framework introduced in \cite{drori2012PerformanceOF} and gave a proof of the convergence rate of the Optimized Gradient Method (OGM). OGM is a first-order method providing a guarantee on the \emph{final suboptimality gap relative to the initial distance to the optimizer}. That is, OGM guarantees
\begin{align}
    \label{eq:DS_guarantee}
    h(y_N) - h(y_\star) \leq \tau_N^\textup{OGM}\cdot \frac{L}{2}\norm{y_0-y_\star}^2,
\end{align}
where $h$ is any $L$-smooth convex function $h:\R^d\to\R$ with minimizer $y_\star$, $y_0\in\R^d$ is any initial iterate, and $y_N$ is the output of OGM on the function $h$. Furthermore, assuming that $d\geq N+2$, \citet{drori2017exact} showed that OGM is optimal: no first-order method can guarantee 
\eqref{eq:DS_guarantee} with a strictly smaller constant $\tau_N$.

In follow-up work, \citet{Kim2021gradient} designed a first-order method OGM-G for minimizing the \emph{final gradient norm relative to the initial suboptimality}, guaranteeing
\begin{align}
    \label{eq:SG_guarantee}
    \frac{1}{2L}\norm{\grad h(y_N)}^2 \leq \tau_{N}^{\textup{OGM-G}} \cdot \left(h(y_0)- h(y_\star)\right).
\end{align}

While \eqref{eq:DS_guarantee} and \eqref{eq:SG_guarantee} both capture the idea that $y_N$ is ``near optimal'', it is not clear that there should be a direct relationship between these two guarantees. Surprisingly, \citet{Kim2021gradient} observed that the optimized rates were identical $\tau_N^\textup{OGM} = \tau_N^\textup{OGM-G}$. Furthermore, the algorithm OGM-G is \emph{exactly} the antitranspose (intuitively a ``time-reversed'' version) of OGM.

Beyond the example above, H-duality is more generally a phenomenon concerning the convergence guarantees of fixed-step first-order methods (FSFOMs).
A FSFOM is any algorithm that updates the query point by a predetermined linear combination of the previously observed gradients and can be represented by (or identified with) a strictly lower triangular matrix $\tilde H$.
Many familiar algorithms including Nesterov's fast gradient method~\cite{Nesterov1983} and gradient descent fall in this class.

H-duality is the following observation

\fbox{\parbox{\textwidth}{\centering Worst-case performance is often, but not always, preserved when replacing a method $\tilde H$ 
and a performance setup (initial and terminal performance criteria) with its dual method $\tilde H^A$ and its dual performance setup.}}

Here, the dual method is the \emph{antitranspose} $\tilde H^A$, obtained by flipping $\tilde H$ across its antidiagonal (see \cref{sec:preliminaries} for formal definitions).
In the OGM and OGM-G example, the performance setups were
initial distance to terminal suboptimality and initial suboptimality to terminal gradient respectively.

In general, there are many ways of measuring the 
``quality'' of a given point. For example, in the smooth convex setting, natural crtiera are the suboptimality, distance to the minimizer, and gradient norm:
\begin{gather}
    h(y) - h(y_\star) \tag{S}\\
    \|y - y_\star\|^2 \tag{D}\\
    \|\nabla h(y)\|^2 \tag{G}.
\end{gather}
A performance setup then consists of a pair of these crtieria. The 9 possible performance setups in $\set{\textup{S},\textup{D},\textup{G}}^2$ have been arranged in the following conjectured H-dual pairs based on numerical and theoretical evidence~\cite{taylor2022optimal,gupta2023branch,kim2024proof}:
\begin{align}
    \label{eq:function_dualities}
    &\textup{(D,S)}\leftrightarrow\textup{(S,G)}
    &&\textup{(G,S)}\leftrightarrow\textup{(S,D)}
    &&\textup{(S,S)}\leftrightarrow\textup{(S,S)}\\
    \label{eq:classical_dualities}
    &\textup{(D,G)}\leftrightarrow\textup{(D,G)}
    &&\textup{(D,D)}\leftrightarrow\textup{(G,G)}
    &&\textup{(G,D)}\leftrightarrow\textup{(G,D)}.
\end{align}

\begin{remark}
\label{rem:}
The dualities in the first row are defined only in the function minimization setting, whereas the dualities in the second row are well-defined in both the function minimization and fixed-point settings (the ``gradient'' in the operator setting should be interpreted as the fixed point residual; see \cref{sec:preliminaries}). For this reason, the main body of this paper will focus on the dualities in the second row.
Due to the historical significance of the (D,S)-(S,G) pairing, we translate our results from the main body to that setting in \cref{app:suboptimality}. 
We expect that analogues of our results hold for the remaining two pairings.
\end{remark}

\subsection{Our contributions}



While there is a relatively simple algebraic relationship mapping a FSFOM $\tilde H$ to its antitransposed FSFOM $\tilde H^A$, the mechanism by which the guarantees are preserved has remained incompletely understood.
Indeed, most analyses of H-dual pairs treat the two algorithms entirely separately~\cite{kim2016optimized,Kim2021gradient,grimmer2024composing}.
In this paper, we offer two explanations for when this phenomenon holds (and when it does not) and the underlying mechanisms.

Our first explanation (\cref{sec:extremal}) is based on the worst-case instances for a given method.
The key observation is that for many algorithms and performance setups, the worst-case behavior is attained on a function or operator exhibiting extremal curvature at the queried iterates.
We show that FSFOMs on these extremal trajectories are governed by a single scalar parameter that is invariant under antitransposition.
Whenever the worst-case instances for these algorithms have extremal curvature in this sense and the performance setups satisfy a simple compatibility condition, this invariance establishes H-duality.
This argument applies simultaneously to both the smooth strongly convex function setting and the contractive operator setting.
Moreover, this argument \emph{predicts H-dualities beyond the dualities in \eqref{eq:function_dualities} and \eqref{eq:classical_dualities}}.

Our second explanation (\cref{sec:congruences}) is complementary and works directly with PEP certificates.
Informally, a PEP certificate proves a convergence rate for $\tilde H$ by aggregating interpolation inequalities governing the problem class with nonnegative multipliers and verifying that the resulting quadratic form is PSD. These multipliers can be arranged into a matrix $\Lambda$ satisfying certain sign conditions.
For each of the dualities in \eqref{eq:classical_dualities}, we show that there is a common explicit transformation that maps any given matrix $\Lambda$ to another matrix $\Phi$ for which the quadratic forms associated with $(\tilde H, \Lambda)$ and $(\tilde H^A,\Phi)$ are congruent.
This algebraic correspondence does not rely on any sparsity assumption on $\Lambda$ or $\Phi$ and reduces the question of H-duality to whether $\Lambda$ and $\Phi$ satisfy the necessary sign conditions. 
In numerical experiments, among the (D,G)-(D,G) H-dual pairs $\tilde H$ and $\tilde H^A$ that we tested, nearly every pair had PEP certificates $\Lambda$ and $\Phi$ satisfying this algebraic relationship (see \cref{rem:beyond_tridiagonal}).

\cref{sec:tridiagonal} identifies the setting in which $\Lambda$ or $\Phi$ is tridiagonal as an important and easily verifiable setting in which the map $\Lambda\leftrightarrow\Phi$ \emph{automatically} preserves the sign conditions. We study this setting in detail because it makes the sign preservation automatic and includes certificates from many optimized/optimal FSFOMs. 
As an application, we use this transformation to prove a (G,G) convergence guarantee for the H-dual of ITEM~\cite{drori2017exact,Drori2021OnTO}, which had previously been conjectured and only confirmed numerically for small iteration counts.

\subsection{Prior work}



Following the discovery of OGM and OGM-G, \citet{Kim2024-Hduality} coined the term H-duality to describe this phenomenon and provided the first explanation of H-duality for the initial distance to terminal suboptimality (D,S) and initial suboptimality to terminal gradient (S,G) pairings.
Interpreted in our language, \citet{Kim2024-Hduality} provide an explicit invertible transformation between bidiagonal (D,S) PEP certificates $\Lambda$ with prescribed optimizer multipliers and bidiagonal-plus-last-row (S,G) PEP certificates $\Phi$.
They provide a two-way correspondence $\Lambda\leftrightarrow \Phi$ between PEP certificates with the prescribed supports.

More recently,~\citet{kim2024proof} extended these results to the smooth strongly convex setting as part of proving the exact convergence rate of constant step gradient descent.
\citet{kim2024proof} describes a general procedure for searching for a (D,S) PEP certificate $\Lambda$ 
given a (S,G) PEP certificate $\Phi$ (and vice versa). Specifically,~\cite{kim2024proof} shows that the relevant quadratic forms are congruent if $\Lambda$ and $\Phi$ satisfy a number of bilinear constraints.
Thus, when $\Phi$ is fixed, a candidate $\Lambda$ can be found by solving a linear system (and vice versa). It is unclear, however, 
whether this linear system is invertible and what to do if it is not uniquely solvable.
This is clarified in the setting where $\Phi$ has tridiagonal-plus-last-row support where an explicit map $\Phi\mapsto \Lambda$ is constructed.


The prior results of~\cite{kim2024proof,Kim2024-Hduality} are designed for the (D,S)--(S,G) pairing and exploit multiplier structures special to that setting. Thus, the results therein do not apply to the dualities in \eqref{eq:classical_dualities} nor to the contractive operator setting. Moreover, the explicit transformations $\Lambda\mapsto \Phi$ and $\Phi\mapsto\Lambda$ are only provided assuming particular sparsity patterns.
To our knowledge, there is no prior explanation for any of the dualities in \eqref{eq:classical_dualities} nor in the contractive operator setting.

We extend this certificate-level viewpoint in three directions. First, we identify the natural subspaces for the PEP certificates in \eqref{eq:classical_dualities} and provide an explicit invertible map between general invertible (reduced) PEP certificates. We make no assumption on the sparsity pattern or the multipliers involving the optimizer. Second, a single mechanism applies to both the contractive operator and smooth strongly convex function setting and across all three dualities in \eqref{eq:classical_dualities}. Third, the transformation isolates a single remaining obstruction to H-duality: whether $\Phi$ satisfies the necessary sign conditions.

Concurrent work by~\citet{yoon2026theory} builds upon an initial draft of the current manuscript, which we shared privately, to fully characterize the minimax optimal algorithms in the nonexpansive operator setting. Specifically, they take \cref{thm:DG} as a starting point and study a combinatorial structure underlying when the map $\Lambda\leftrightarrow \Phi$ preserves the necessary sign conditions on optimal algorithms.
Their results complement our framework: the present paper identifies the common certificate transformation across settings and performance criteria, whereas their work analyzes the sign preservation question for particular FSFOMs in a fixed performance setup.



\section{Preliminaries: PEP, interpolation, and antitransposition}
\label{sec:preliminaries}

This section introduces the notation and definitions that we will use to simultaneously describe the smooth strongly convex function setting and the contractive operator setting (henceforth, the function and operator settings).

\subsection{Problem classes and fixed-step first-order methods}

\paragraph{The smooth strongly convex setting.} In this setting, our goal is to find an approximate minimizer to
    $\min_{y\in\R^d}h(y)$
where $h:\R^d\to\R$ is assumed to be $L$-smooth and $\mu$-strongly convex for some $\mu\in[0,L)$ and to possess a minimizer $y_\star$. Without loss of generality, we will assume $L=1$ and, by translating and shifting if necessary, that $y_\star = 0$ and $h(y_\star) = 0$.

The standard oracle model in this setting takes a query point $y_i$ and returns the gradient
\begin{align*}
    s_i = \grad h(y_i).
\end{align*}
We will also use the shorthand $h_i = h(y_i)$.

\paragraph{The contractive operator setting.}
In this setting, our goal is to find a fixed point of an operator $T:\R^d\to\R^d$ that is assumed to be $\frac{1-\mu}{1+\mu}$-contractive, where $\mu\in[0,1)$. That is
\begin{align}
    \label{eq:contraction}
    \norm{T(y) - T(z)} \leq \frac{1-\mu}{1+\mu}\norm{y-z}\qquad\text{for all $y,z\in\R^d$}.
\end{align}
We will assume that $T$ has a fixed point $y_\star$. By translating if necessary, we may assume without loss of generality that $y_\star = 0$.

The standard oracle model in this setting takes a query point $y_i$ and returns $T(y_i)$. We will associate to this query the scaled residual $s_i = \frac{1+\mu}{2}(y_i - T(y_i))$.

\begin{remark}
Our parameterization of the contraction parameter in \eqref{eq:contraction} and the scaled residual $s_i$ in the operator
setting aligns it with the function setting: Given a $1$-smooth $\mu$-strongly convex function, define the gradient
descent map $T_h(y) \coloneqq y -\frac{2}{1+\mu} \grad h(y)$. Then, $y_\star$ is a minimizer of $h$ if and only if it is a fixed point of $T_h(y)$. Furthermore,  the operator $T_h$ is $\frac{1-\mu}{1+\mu}$-contractive and
\begin{equation*}
    \grad h(y) = \frac{1+\mu}{2}(y - T_h(y)) \qquad\text{for all $y\in\R^d$}.\qedhere
\end{equation*}
\end{remark}

\paragraph{Fixed-step first-order methods.}
Now, let $\cI_N = \set{0,1,\dots,N}$.
In both settings, a fixed-step first-order method (FSFOM) is represented by a strictly lower triangular matrix $\tilde H \in\R^{\cI_N\times \cI_N}$. This matrix defines the following algorithm:
\begin{align*}
    &y_0\in\R^d\qquad\text{given}\\
    &y_n = y_{n-1} - \sum_{i=0}^{n-1} \tilde H_{n,i} s_i\in\R^d\qquad \text{for }n = 1,\dots,N.
\end{align*}
Recall that in the function setting $s_i$ is the gradient of the underlying function $h$, whereas $s_i$ is the scaled residual in the operator setting.

Throughout, we will interpret elements of $\R^d$ (such as $y_i$ and $s_i$) as row vectors.
Let $\by$ and $\bs$ denote matrices in $\R^{\cI_N\times d}\simeq (\R^d)^{\cI_N}$ with $i$th row $y_i$ and $s_i$.
For the index $\star$, define $y_\star = 0$ and $s_\star = 0$.

In the function setting, additionally define $\bh$ to be the vector with $i$th entry $\bh_i = h_i$ and set $h_\star = 0$.

We can rewrite the FSFOM as
\begin{equation*}
    \begin{bmatrix}
    y_0\\
    y_1 - y_0\\
    \vdots\\
    y_N - y_{N-1}
    \end{bmatrix} = - \tilde H \bs + e_0 y_0.
\end{equation*}
Let $S$ denote the cumulative sum operator, i.e., the square matrix with ones on and below the diagonal.
Left-multiplying both sides of the above equation by $S$ gives the compact expression
\begin{align*}
 \by = -S\tilde H\bs + \bbm{1} y_0.   
\end{align*}

\subsection{Antitransposition}

Let $R$ denote the reversal operator and let $P\coloneqq SR$. Pictorially,
\begin{align*}
    R = \begin{bmatrix}
    &&1\\
    &\bddots\\
    1
    \end{bmatrix},\qquad P = SR = \begin{bmatrix}
        &  &1\\
         & \bddots  & \vdots\\
        1  & \dots & 1
        \end{bmatrix}.
\end{align*}

\begin{definition}
Given a square matrix $\tilde H$, define its antidiagonal transpose (antitranspose) to be
\begin{align*}
    \tilde H^A  \coloneqq R \tilde H^\intercal R.
\end{align*}
If $X^A= X$, we say that $X$ is \emph{self-antitranspose}.
\end{definition}

\subsection{The transformed trajectory data}

The trajectory data $(\by,\bh,\bs)$ in the function setting and $(\by,\bs)$ in the operator setting are awkward to work with directly. We define transformed data that will simplify calculations later on.

In both settings, parameterize $\mu = \frac{q}{1+q}$, where $q\in[0,\infty)$. Equivalently, $q = \frac{\mu}{1-\mu}$. Then define
$x_i \coloneqq y_i - s_i$ and 
$g_i \coloneqq s_i - q x_i$.
Collecting these objects into their own elements of $\R^{\cI_N\times d}$, these definitions are equivalently
\begin{gather*}
    \bx \coloneqq \by - \bs,\qquad
    \bg \coloneqq \bs - q\bx.
\end{gather*}
The corresponding quantities for the index $\star$ are 
$x_\star = y_\star = 0$, $g_\star = s_\star = 0$.

In the function setting, additionally define
    $f_i \coloneqq h_i - \frac{1}{2}\norm{s_i}^2 - \frac{q}{2}\norm{x_i}^2$, or, in vector notation,
\begin{align*}
    \bf &\coloneqq \bh - \frac{1}{2}\diag(\bs\bs^\intercal) - \frac{q}{2}\diag({\bx\bx^\intercal}).
\end{align*}
The corresponding quantity for the index $\star$ is $f_\star = h_\star = 0$.

Below, we write the trajectory $\bx$ as a function of $\bg$ and the initial query point $y_0$.
Define
\begin{align}
    \label{eq:H_vH_definition}
    H &\coloneqq (S^{-1} + \tilde H)((1+q)I + q S\tilde H)^{-1}\qquad\text{and}\qquad
    v_H \coloneqq (I - q S H)\bbm{1}.
\end{align}

\begin{lemma}
    \label{lem:x_identity}
It holds that
    $\bx = -S H \bg + v_H y_0$.
\end{lemma}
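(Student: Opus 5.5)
The plan is to turn the FSFOM recursion $\by = -S\tilde H\bs + \bbm{1} y_0$ into a linear equation for $\bx$ in terms of $\bg$ and $y_0$, solve it, and then match the result with the definitions of $H$ and $v_H$ in \eqref{eq:H_vH_definition}. Throughout write $A \coloneqq S\tilde H$ and $M \coloneqq (1+q)I + qA$.

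First, eliminate $\by$ and $\bs$. From $\bx = \by - \bs$ and $\bg = \bs - q\bx$ we get $\bs = \bg + q\bx$ and $\by = \bx + \bs = (1+q)\bx + \bg$. Substituting both into the recursion gives
\begin{equation*}
(1+q)\bx + \bg = -A\bg - qA\bx + \bbm{1}y_0,
\end{equation*}
that is,
\begin{equation*}
M\bx = -(I + A)\bg + \bbm{1} y_0 .
\end{equation*}
The matrix $M$ is invertible. Indeed, $S$ is lower triangular and $\tilde H$ is strictly lower triangular, so $A$ is strictly lower triangular. Hence $M$ is lower triangular with diagonal entries $1+q>0$. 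This yields
\begin{equation*}
\bx = -M^{-1}(I+A)\bg + M^{-1}\bbm{1}y_0 .
\end{equation*}
The same invertibility shows that the inverse in the definition of $H$ exists.

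Next, identify the two coefficients. Since $I + A = S(S^{-1} + \tilde H)$, the definition of $H$ reads $SH = (I+A)M^{-1}$. Both $I+A$ and $M$ are polynomials in $A$, so they commute, and therefore $M^{-1}$ commutes with $I+A$. Hence $M^{-1}(I+A) = (I+A)M^{-1} = SH$, which is exactly the coefficient of $\bg$.

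For the $y_0$ term we compute
\begin{equation*}
I - qSH = \bigl(M - q(I+A)\bigr)M^{-1} = \bigl((1+q)I + qA - qI - qA\bigr)M^{-1} = M^{-1}.
\end{equation*}
Therefore $v_H = (I - qSH)\bbm{1} = M^{-1}\bbm{1}$, matching the coefficient of $y_0$, and the lemma follows.

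The only step that needs any care is the commutation $M^{-1}(I+A) = (I+A)M^{-1}$. The definition of $H$ places $((1+q)I + qS\tilde H)^{-1}$ on the right, while solving the linear system naturally puts $M^{-1}$ on the left. This is resolved by noticing that everything involved is a polynomial in the single matrix $A = S\tilde H$. The rest is routine algebra.
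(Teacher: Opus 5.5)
Your proposal is correct and follows essentially the same route as the paper: substitute $\bs = \bg + q\bx$ into the recursion to get $\bigl((1+q)I + qS\tilde H\bigr)\bx = -(I+S\tilde H)\bg + \bbm{1}y_0$, commute the inverse past $I + S\tilde H$ to obtain $SH$, and use $I - qSH = \bigl((1+q)I+qS\tilde H\bigr)^{-1}$ to identify $v_H$. Your explicit check that this matrix is invertible (lower triangular with diagonal $1+q>0$) is a small detail the paper leaves implicit.
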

\begin{proof}
In this proof only, set $B = (1+q)I + qS\tilde H$ so that $H = (S^{-1} + \tilde H)B^{-1}$.

We can write
\begin{align*}
    \bx &= \by - \bs=-\left(I + S\tilde H\right)\bs + \bbm{1} y_0= -\left(I +S\tilde H\right)(\bg +q\bx) + \bbm{1} y_0 = -(B-I)\bx -\left(I +S\tilde H\right)\bg + \bbm{1} y_0.
\end{align*}
Rearranging gives
\begin{align*}
    \bx &= -B^{-1}(I + S\tilde H)\bg + B^{-1}\bbm{1} y_0.
\end{align*}

The term involving $\bg$ is
\begin{align*}
    -B^{-1}(I + S\tilde H)\bg &= -(I + S\tilde H)B^{-1}\bg = - S H \bg,
\end{align*}
where the first equality follows as $I$ and $S\tilde H$ commute.

Next, note that
\begin{align*}
    I - qSH &= I - q(I + S\tilde H)B^{-1}
    = (B - q(I+ S\tilde H))B^{-1}
    = B^{-1}.
\end{align*}
Thus, the term involving $y_0$ is $B^{-1}\bbm{1} y_0= v_H y_0$.
\end{proof}

The H-duality phenomenon maps convergence guarantees between a FSFOM $\tilde H$ and its antitranspose $\tilde H^A$. The following lemma verifies that the operation of mapping $\tilde H$ to $H$ commutes with taking the antitranspose. Thus, we may equivalently study convergence guarantees that are preserved between $H$ and $H^A$.

\begin{lemma}
The operation $\tilde H\mapsto H$ commutes with the antitranspose operation.
\end{lemma}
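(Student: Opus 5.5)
We must show $(\tilde H^A)$ maps to $H^A$, i.e.
\[
(S^{-1}+\tilde H^A)\bigl((1+q)I+qS\tilde H^A\bigr)^{-1} = \Bigl[(S^{-1}+\tilde H)\bigl((1+q)I+qS\tilde H\bigr)^{-1}\Bigr]^A.
\]
The plan is to use the basic algebraic properties of the antitranspose $X\mapsto X^A = RX^\intercal R$. Since $R^2=I$ and $R^\intercal=R$, this map is linear, reverses products, $(XY)^A = Y^AX^A$, commutes with inversion, $(X^{-1})^A=(X^A)^{-1}$, and fixes $I$. The only matrix-specific fact needed is that $S$ is self-antitranspose. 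Indeed, $S^\intercal$ is the upper triangular all-ones matrix, and conjugating by $R$ reverses both rows and columns, which returns the lower triangular all-ones matrix. Hence $S^A=S$, and therefore $(S^{-1})^A=S^{-1}$.

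Applying these rules to the right-hand side gives
\[
H^A = \bigl((1+q)I + q\tilde H^A S\bigr)^{-1}(S^{-1}+\tilde H^A).
\]
So, writing $K=\tilde H^A$, it remains to prove the identity
\[
\bigl((1+q)I + qKS\bigr)^{-1}(S^{-1}+K) = (S^{-1}+K)\bigl((1+q)I+qSK\bigr)^{-1}.
\]
This identity holds for any strictly lower triangular $K$, and it is really a push-through identity. Multiplying on the left by $(1+q)I+qKS$ and on the right by $(1+q)I+qSK$, it becomes
\[
(S^{-1}+K)\bigl((1+q)I+qSK\bigr) = \bigl((1+q)I+qKS\bigr)(S^{-1}+K).
\]
Expanding, the left side is $(1+q)S^{-1} + qK + (1+q)K + qKSK$, and the right side is $(1+q)S^{-1}+(1+q)K + qK + qKSK$. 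The two sides agree.

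Invertibility of $(1+q)I+qSK$ and $(1+q)I+qKS$ is automatic. Both are lower triangular with diagonal entries $1+q>0$, because $SK$ and $KS$ are strictly lower triangular. This is also the invertibility already used in \eqref{eq:H_vH_definition}.

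There is no real obstacle here. The only step needing care is the bookkeeping of the order reversal under antitranspose, which turns $B^{-1}$ from a right factor into a left factor. The push-through identity then removes that asymmetry.
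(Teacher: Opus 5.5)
Your proof is correct and follows essentially the same route as the paper: both arguments use that the antitranspose reverses products, commutes with inversion, and fixes $S$ (hence $S^{-1}$), together with a push-through identity that moves the inverse factor from one side to the other. The only cosmetic difference is the order of steps. The paper first rewrites $H = \hat H(I+qS\hat H)^{-1}$ with $\hat H = \tilde H + S^{-1}$ and applies push-through before antitransposing, whereas you antitranspose first and then verify the equivalent identity for $K=\tilde H^A$ by direct expansion.
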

\begin{proof}
Let $\hat H = \tilde H + S^{-1}$.
By definition
\begin{align*}
    H = (\tilde H +  S^{-1})((1+q)I + q  S\tilde H)^{-1} = \hat H (I + q  S \hat H)^{-1}.
\end{align*}
Thus, we can write the map $\tilde H\mapsto  H$ as a composition of the maps $\tilde H\mapsto \hat H$ and $\hat H\mapsto H$. Within this proof, denote these two maps by $F_1(\tilde H)$ and $F_2(\hat H)$. Our goal is to show that
\begin{align*}
    F_2(F_1(\tilde H^A)) = F_2(F_1(\tilde H)^A) = F_2(F_1(\tilde H))^A.
\end{align*}

For the first equality, we compute
\begin{align*}
    F_1(\tilde H)^A &= (\tilde  H +  S^{-1})^A= R(\tilde H^\intercal +  S^{-\intercal})R\\
    &= R\tilde H^\intercal R + R S^{-\intercal}R= \tilde H^A +  S^{-A}\\
    &= F_1(\tilde  H^A).
\end{align*}
Here, we have used that $ S^{-1} =  S^{-A}$.

Now, let $\hat H = F_1(\tilde H)$. 
By definition,
\begin{align*}
     H = \hat H(I + q S\hat H)^{-1} = (I + q\hat H S)^{-1} \hat H.
\end{align*}
Here, the second identity can be verified by left-multiplying both sides by $(I+q\hat H S)$ and right-multiplying both sides by $(I + q S\hat H)$:
\begin{align*}
    \hat H + q\hat H S\hat  H = \hat H + q\hat H S\hat H.
\end{align*}
Then,
\begin{align*}
    F_2(\hat H)^A &= ((I + q\hat H S)^{-1} \hat H)^A= R \hat H^\intercal (I + q  S^\intercal\hat H^\intercal)^{-1}R\\
    &= \hat H^A(I + q  S\hat H^A)^{-1}=F_2(\hat H^A).\qedhere
\end{align*}
\end{proof}


\subsection{Interpolation}
The following results are the foundation of the performance estimation problem (PEP) literature~\cite{taylor2017interpolation,Ryu2020tightSplittingContraction}.
They provide necessary and sufficient conditions for a finite set of data to be interpolated by a smooth strongly convex function or a contractive operator.
The transformations on the trajectory data defined earlier in this section can be used to greatly simplify the expressions for these interpolation inequalities.


\begin{lemma}
    \label{lem:interpolation}
\begin{itemize}
    \item In the function setting, the data $(y_i, h_i, s_i)_{i\in \cI_N\cup\set{\star}}$ can be interpolated by a $1$-smooth $\mu$-strongly convex function if and only if
\begin{align*}
f_j - f_i - \ip{g_i, x_j - x_i}\geq 0\qquad\text{for all }i,j\in \cI_N\cup\set{\star}.
\end{align*}
\item In the operator setting, the data $(y_i, s_i)_{i\in \cI_N\cup\set{\star}}$ can be interpolated by a $\frac{1-\mu}{1+\mu}$-contractive operator if and only if
\begin{align*}
    \ip{g_i - g_j, x_i -x_j}\geq 0\qquad\text{for all }i,j\in \cI_N\cup\set{\star}.
\end{align*}
\end{itemize}
\end{lemma}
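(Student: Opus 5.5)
The plan is to reduce both parts to the known interpolation theorems: Taylor--Hendrickx--Glineur for smooth strongly convex functions, and Ryu--Taylor--Bergeling--Giselsson for contractive operators. In each case the work is to show that the classical inequality, written in $(y,h,s)$ and $L=1$, becomes the stated inequality after the change of variables $x_i = y_i - s_i$, $g_i = s_i - qx_i$, $f_i = h_i - \frac12\norm{s_i}^2 - \frac q2\norm{x_i}^2$, with $\mu = q/(1+q)$. I will treat the two settings separately. In each, I check that the substitution turns the classical condition into the stated one up to a positive scalar factor, which preserves the sign.

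\emph{Function setting.} The classical result says interpolability by a $1$-smooth $\mu$-strongly convex function is equivalent to
\begin{align*}
h_j \ge h_i + \ip{s_i, y_j-y_i} + \tfrac12\norm{s_i-s_j}^2 + \tfrac{\mu}{2(1-\mu)}\norm{y_i-y_j - (s_i-s_j)}^2
\end{align*}
for all $i,j\in\cI_N\cup\set{\star}$. Since $\frac{\mu}{1-\mu}=q$ and $y_i-y_j-(s_i-s_j) = x_i-x_j$, the last term is $\frac q2\norm{x_i-x_j}^2$. I will substitute $h_k = f_k + \frac12\norm{s_k}^2 + \frac q2\norm{x_k}^2$ and $y_k = x_k + s_k$, expand, and check that all quadratic terms cancel except $\ip{s_i - qx_i, x_j - x_i} = \ip{g_i, x_j-x_i}$. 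A convenient intermediate form is
\begin{align*}
h_j - h_i - \ip{s_i,y_j-y_i} - \tfrac12\norm{s_i-s_j}^2 = f_j - f_i - \ip{s_i,x_j-x_i} + \tfrac q2\left(\norm{x_j}^2-\norm{x_i}^2\right).
\end{align*}
This holds because $\frac12\norm{s_j}^2 - \frac12\norm{s_i}^2 - \ip{s_i,s_j-s_i} - \frac12\norm{s_i-s_j}^2 = 0$. Subtracting $\frac q2\norm{x_i-x_j}^2$ then leaves $f_j-f_i-\ip{s_i,x_j-x_i}+q\ip{x_i,x_j-x_i}$, which is the claimed expression. The $\star$ index needs no special care: the convention $x_\star=g_\star=f_\star=0$ is consistent with $y_\star=s_\star=0$ and $h_\star=0$.

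\emph{Operator setting.} Interpolability by a $\rho$-Lipschitz operator with $\rho=\frac{1-\mu}{1+\mu}$ holds iff $\norm{T_i-T_j}\le\rho\norm{y_i-y_j}$ pairwise. This is Kirszbraun's extension theorem, as used by Ryu et al. Write $T_i = y_i - \frac{2}{1+\mu}s_i$, $\Delta y = y_i-y_j$ and $\Delta s = s_i-s_j$. Expanding the condition and multiplying through by $(1+\mu)^2$, it becomes
\begin{align*}
4\mu\norm{\Delta y}^2 - 4(1+\mu)\ip{\Delta y,\Delta s} + 4\norm{\Delta s}^2 \le 0,
\end{align*}
that is, $(1+\mu)\ip{\Delta y,\Delta s} - \mu\norm{\Delta y}^2 - \norm{\Delta s}^2 \ge 0$.

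Separately, I will compute
\begin{align*}
\ip{\Delta g,\Delta x} = \ip{\Delta s - q(\Delta y-\Delta s), \Delta y-\Delta s} = (1+2q)\ip{\Delta y,\Delta s} - (1+q)\norm{\Delta s}^2 - q\norm{\Delta y}^2.
\end{align*}
With $\mu = q/(1+q)$, the previous expression times $(1+q)$ is $(1+2q)\ip{\Delta y,\Delta s} - q\norm{\Delta y}^2 - (1+q)\norm{\Delta s}^2$. This matches, so the two conditions are equivalent because the factor $1+q$ is positive.

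The main obstacle is bookkeeping rather than anything conceptual. The key points are getting the exact form and normalization of the cited interpolation theorems right, namely the $\frac{1}{2(1-\mu/L)}$ coefficient with $L=1$ and the curvature term, and confirming that the $s$-cross terms cancel exactly in the function case. I would verify the algebra by checking the one-dimensional case $\mu=0$, where $q=0$, $g=s$ and $x = y - s$, against the familiar cocoercivity inequality.
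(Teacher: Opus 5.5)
Your proposal is correct and takes the same route as the paper, which gives no written proof but cites \citet{taylor2017interpolation} and \citet{Ryu2020tightSplittingContraction} and remarks that the change of variables to $(x_i,g_i,f_i)$ simplifies the classical inequalities; your algebra checks out, both the exact cancellation of the $s$-terms in the function case and $\ip{\Delta g,\Delta x} = (1+q)\bigl[(1+\mu)\ip{\Delta y,\Delta s}-\mu\norm{\Delta y}^2-\norm{\Delta s}^2\bigr]$ in the operator case. Your operator computation is in fact an identity, $\ip{\Delta g,\Delta x} = \frac{(1+q)(1+\mu)^2}{4}\bigl(\bigl(\tfrac{1-\mu}{1+\mu}\bigr)^2\norm{\Delta y}^2-\norm{T_i-T_j}^2\bigr)$, which is exactly the equality version the paper later invokes in the proof of \cref{lem:primal_dual_extremal}.
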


Aggregating these inequalities gives the following result.

\begin{lemma}
    \label{lem:aggregated_interpolation}
Let $\tilde H\in \R^{\cI_N\times \cI_N}$ be strictly lower triangular.
Suppose $\Lambda\in \R^{\cI_N\times \cI_N}$ has nonnegative off-diagonal entries and $a, b\in\R^{\cI_N}$ are nonnegative and $\Lambda\bbm{1} = - a$.
In the operator setting, further assume that $\Lambda$ is symmetric and $a = b$.
Then,
\begin{align*}
        &\ip{\Lambda^\intercal\bbm{1}+b,\bf} +\bbm{1}^\intercal(a - b) f_\star
    - \tr(\bg^\intercal\Lambda\bx)\geq 0
\end{align*}
for any trajectory produced by $\tilde H$ on a $1$-smooth $\mu$-strongly convex function or $\frac{1-\mu}{1+\mu}$-contractive operator.
In the operator setting, the terms involving $\bf$ and $f_\star$ are identically zero and it is inconsequential that $\bf$ and $f_\star$ are undefined.
\end{lemma}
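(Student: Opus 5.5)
The plan is to obtain the inequality as a nonnegative combination of the interpolation inequalities of \cref{lem:interpolation}, using the off-diagonal entries of $\Lambda$ as multipliers for pairs $i\neq j$ in $\cI_N$ and the entries of $a$ and $b$ as multipliers for the pairs involving $\star$. The diagonal of $\Lambda$ gets no multiplier of its own; it is fixed by the constraint $\Lambda\bbm{1}=-a$. That constraint is exactly what makes the diagonal bilinear terms match.

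\textbf{Function setting.} Write $Q_{i\to j}\coloneqq f_j-f_i-\ip{g_i,x_j-x_i}\ge 0$ for $i,j\in\cI_N\cup\set{\star}$. Consider
\[
\sum_{i\neq j}\Lambda_{ij}Q_{i\to j}+\sum_i a_i Q_{i\to\star}+\sum_i b_i Q_{\star\to i}.
\]
It is nonnegative because every multiplier is nonnegative. Using $x_\star=0$ and $g_\star=0$, the two star terms simplify to
\[
Q_{i\to\star}=f_\star-f_i+\ip{g_i,x_i},\qquad Q_{\star\to i}=f_i-f_\star.
\]

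I then collect the bilinear terms. The off-diagonal part is $-\sum_{i\neq j}\Lambda_{ij}\ip{g_i,x_j}$. The diagonal part is $\sum_i\bigl(\sum_{j\neq i}\Lambda_{ij}+a_i\bigr)\ip{g_i,x_i}$. Since $\Lambda\bbm{1}=-a$, we have $\sum_{j\neq i}\Lambda_{ij}+a_i=-\Lambda_{ii}$. So the total bilinear part is $-\sum_{i,j}\Lambda_{ij}\ip{g_i,x_j}=-\tr(\bg^\intercal\Lambda\bx)$.

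Next I collect the $f$-terms. The coefficient of $f_j$ is
\[
\sum_{i\neq j}\Lambda_{ij}-\sum_{k\neq j}\Lambda_{jk}-a_j+b_j .
\]
Using the row-sum identity again, $-\sum_{k\neq j}\Lambda_{jk}-a_j=\Lambda_{jj}$. The coefficient therefore becomes $\sum_i\Lambda_{ij}+b_j=(\Lambda^\intercal\bbm{1}+b)_j$. The coefficient of $f_\star$ is $\bbm{1}^\intercal(a-b)$. Together these give exactly the stated expression.

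\textbf{Operator setting.} Here only the symmetric inequalities $\ip{g_i-g_j,x_i-x_j}\ge0$ are available. I will use multiplier $\Lambda_{ij}$ on each unordered pair $\{i,j\}\subseteq\cI_N$ with $i\neq j$ (symmetry makes this well defined), and $a_i$ on each pair $\{i,\star\}$. Formally, $\ip{g_i-g_j,x_i-x_j}=Q_{i\to j}+Q_{j\to i}$ with arbitrary placeholder values $f$, since the $f$'s cancel. The resulting combination is therefore the same as the function-setting combination with $\Lambda_{ij}$ split into two halves on the two orders and $b=a$.

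By the computation above, its $f$-coefficients are $\Lambda^\intercal\bbm{1}+b=\Lambda\bbm{1}+a=0$ and $\bbm{1}^\intercal(a-b)=0$. This confirms that the $f$-terms vanish identically and the inequality reduces to $-\tr(\bg^\intercal\Lambda\bx)\ge0$.

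The only delicate point is bookkeeping. One must check that the diagonal terms, which carry no multiplier of their own, are produced correctly by the star inequalities together with $\Lambda\bbm{1}=-a$. One must also check that the symmetric splitting in the operator case reproduces $\tr(\bg^\intercal\Lambda\bx)$ without a spurious factor of $2$. I would verify the latter by writing the pair term $\Lambda_{ij}\ip{g_i-g_j,x_i-x_j}$ and confirming that its cross terms contribute $-\Lambda_{ij}\ip{g_i,x_j}-\Lambda_{ji}\ip{g_j,x_i}$.
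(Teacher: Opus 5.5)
Your proposal is correct and matches the paper's proof. The paper aggregates $Q_{i,j}=f_j-f_i-\ip{g_i,x_j-x_i}$ over all $i,j\in\cI_N$ with weights $\Lambda_{ij}$ (using $Q_{i,i}=0$, where you sum over $i\neq j$), adds the star inequalities with weights $a$ and $b$, uses $\Lambda\bbm1=-a$ to absorb the diagonal bilinear terms, and handles the operator case by pairing $Q_{i,j}+Q_{j,i}$ using the symmetry of $\Lambda$ and $a=b$, just as you do.
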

\begin{proof}
For $i,j\in \cI_N^\star \coloneqq \cI_N \cup\set{\star}$, define
\begin{align*}
    Q_{i,j} &\coloneqq f_j - f_i - \ip{g_i, x_j - x_i},
\end{align*}
where $f_i$ can be undefined in the operator setting.

In the function setting, $Q_{i,j}\geq 0$ for all $i,j$ with equality whenever $i=j$. In the operator setting,
$(Q_{i,j} + Q_{j,i})/2 = \ip{g_j - g_i, x_j - x_i}/2 \geq 0$ for all $i,j$ with equality whenever $i=j$.

We aggregate the $Q_{i,j}$ for $i,j\in \cI_N$ with weights $\Lambda_{i,j}$:
\begin{align*}
    \sum_{i,j \in \cI_N}\Lambda_{i,j} Q_{i,j}
    & =\ip{\Lambda^\intercal\bbm{1} - \Lambda\bbm{1}, \bf} - \ip{\Lambda -\Diag(\Lambda\bbm{1})  , \bg\bx^{\intercal}}.
\end{align*}
This is nonnegative in the function setting since $\Lambda$ has nonnegative off-diagonal entries. In the operator setting, we additionally use the assumption that $\Lambda$ is symmetric.

We aggregate $Q_{i,\star}$ over $i\in \cI_N$ with weights $a$ and $Q_{\star,i}$ over $i\in \cI_N$ with weights $b$:
\begin{align*}
    &\sum_{i\in \cI_N} a_i Q_{i,\star}+\sum_{i\in \cI_N} b_i Q_{\star,i}=\bbm{1}^\intercal a f_\star - \ip{a, \bf}
    + \ip{\Diag(a), \bg \bx^{\intercal}} + \ip{b,\bf} - \bbm{1}^\intercal b f_\star.
\end{align*}
Again, this is nonnegative in the function setting since $a,b\geq 0$. In the operator setting, we additionally use the assumption that $a=b$.
\end{proof}

\subsection{PEP certificates and rates}

The following definitions are inspired by the PEP literature.

\begin{definition}
Let $M_0,M_N\in\S^2_+$ be fixed PSD matrices.
Given $\tilde H\in\R^{\cI_N\times \cI_N}$ strictly lower triangular and $\tau > 0$, we say that $\Lambda\in\R^{\cI_N\times \cI_N}$ is a \emph{PEP certificate of $(M_0,M_N)$ convergence} of $\tilde H$ with rate $\tau$ if
$\Lambda$ has nonnegative off diagonal entries, nonpositive row sums, nonpositive column sums and
\begin{align*}
    \textup{Slack}^{M_0,M_N}_{\tau,\Lambda,\tilde H}(y_0, \bg) &\coloneqq 
    \frac{\tau}{2}\tr\left(\begin{bmatrix}
    y_0\\
    s_0
    \end{bmatrix}^\intercal M_0\begin{bmatrix}
    y_0\\
    s_0
    \end{bmatrix}\right) - \frac{1}{2\tau}\tr\left(\begin{bmatrix}
    y_N\\
    s_N
    \end{bmatrix}^\intercal M_N\begin{bmatrix}
    y_N\\
    s_N
    \end{bmatrix}\right)  + \tr(\bg^\intercal\Lambda\bx)
\end{align*}
is nonnegative for all $(y_0,\bg)$.
Here, $s_0, y_N, s_N, \bx$ are linear functions of $(y_0,\bg)$.

In the operator setting, $\Lambda$ is additionally required to be symmetric.
\end{definition}

We collect the sign constraints into their own definition. Define
\begin{align*}
    \cP_\textup{cert}\coloneqq\set{\Lambda:\, \begin{array}{l}
    \Lambda_{i,j}\geq 0 \quad\forall i\neq j\\
    \Lambda\bbm1 \leq 0\\
    \Lambda^\intercal\bbm1 \leq 0
    \end{array}},
\end{align*}
so that $\Lambda\in\cP_\textup{cert}$ if and only if it has nonnegative off-diagonal entries and nonpositive row and column sums.
The condition $\Lambda\in\cP_\textup{cert}$ is defined for square matrices $\Lambda$ of any dimension.

The standard performance criteria are
\begin{align*}
    M_D \coloneqq \begin{bmatrix}
    1 &\\
    & 0
    \end{bmatrix}\qquad M_G \coloneqq \begin{bmatrix}
    0 &\\
    & 1
    \end{bmatrix},
\end{align*}
corresponding to a distance-type criterion and gradient-type criterion respectively.

We will abbreviate
\begin{align*}
    Q_{M}(y_i,s_i)\coloneqq \tr\left(\begin{bmatrix}
    y_i\\
    s_i
    \end{bmatrix}^\intercal M \begin{bmatrix}
    y_i\\
    s_i
    \end{bmatrix}\right).
\end{align*}

For fixed performance criteria $M_0,M_N$ and a FSFOM $\tilde H$, we can define the PEP rate to be the infimal $\tau$ for which a PEP certificate exists.

\begin{definition}
Let $M_0,M_N\in\S^2_+$ be fixed PSD matrices and let $\tilde H\in\R^{\cI_N\times \cI_N}$ be strictly lower triangular.
Define the \emph{PEP rate}
\begin{align}
    \label{eq:pep_rate}
    \tau^{M_0,M_N}_\textup{PEP}(\tilde H)&\coloneqq \inf_{\substack{\Lambda\in\R^{\cI_N\times \cI_N}\\\tau>0}}\set{\tau:\, \begin{array}{l}
    \textup{Slack}_{\tau,\Lambda,\tilde H}^{M_0,M_N}(y_0,\bg)\geq 0\quad\forall (y_0,\bg)\\
    \Lambda\in \cP_\textup{cert}
    \end{array}
    }.
\end{align}
In the operator setting, $\Lambda$ is additionally constrained to be symmetric.
\end{definition}

The SDP appearing in the definition of $\tau^{M_0,M_N}_\textup{PEP}(\tilde H)$ is often referred to as the PEP dual problem. The corresponding PEP primal problem has $(y_0,\bg)$ as its variables and seeks the worst-case value of the ratio of the performance criteria subject to the interpolation constraints. The following lemma is a restatement of weak duality in this context.

\begin{lemma}
    \label{lem:cert_to_rate}
Suppose we are in either the function setting or the operator setting and fix $M_0,M_N\in\S^2_+$.
Let $\tilde H\in\R^{\cI_N\times \cI_N}$ be strictly lower triangular and let $\tau > 0$. Suppose there exists a PEP certificate $\Lambda\in\R^{\cI_N\times \cI_N}$ of $(M_0,M_N)$ convergence for $\tilde H$ with rate $\tau$.
In the operator setting, further suppose that $\Lambda$ is symmetric.
Then,
\begin{align}
    \label{eq:dd_guarantee}
    \frac{\tau}{2}Q_{M_0}(y_0,s_0) - \frac{1}{2\tau}Q_{M_N}(y_N,s_N) \geq 0 \qquad\text{for any trajectory produced by $\tilde H$.}
\end{align}
\end{lemma}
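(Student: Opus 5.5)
The plan is to combine the definition of the slack with the aggregated interpolation inequality of \cref{lem:aggregated_interpolation}. Fix any trajectory produced by $\tilde H$ on an admissible instance, and write $(y_0,\bg)$ for its data. Since $\Lambda$ is a PEP certificate with rate $\tau$, we have $\textup{Slack}^{M_0,M_N}_{\tau,\Lambda,\tilde H}(y_0,\bg)\geq 0$ for every $(y_0,\bg)$, and in particular for this one. That is,
\begin{align*}
    \frac{\tau}{2}Q_{M_0}(y_0,s_0) - \frac{1}{2\tau}Q_{M_N}(y_N,s_N) \geq -\tr(\bg^\intercal\Lambda\bx).
\end{align*}
It therefore suffices to show that $-\tr(\bg^\intercal\Lambda\bx)\geq 0$ along any true trajectory. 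Note that $\bx$, $s_0$, $y_N$, $s_N$ are the genuine trajectory quantities here, by \cref{lem:x_identity} and the definitions $\bs=\bg+q\bx$ and $\by=\bx+\bs$.

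To obtain $-\tr(\bg^\intercal\Lambda\bx)\geq 0$, I will apply \cref{lem:aggregated_interpolation} with $a\coloneqq -\Lambda\bbm{1}$ and $b\coloneqq -\Lambda^\intercal\bbm{1}$. Membership $\Lambda\in\cP_\textup{cert}$ gives nonnegative off-diagonal entries and nonpositive row and column sums, so $a,b\geq 0$ and $\Lambda\bbm{1}=-a$ as required. In the operator setting, $\Lambda$ is symmetric, so $a=b$, and the lemma's extra hypotheses hold. The lemma then yields
\begin{align*}
    \ip{\Lambda^\intercal\bbm{1}+b,\bf} + \bbm{1}^\intercal(a-b)f_\star - \tr(\bg^\intercal\Lambda\bx)\geq 0.
\end{align*}
With this choice of $b$, the first term vanishes identically. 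The second term vanishes because $f_\star=0$ in the function setting; it is absent in the operator setting. What remains is exactly $-\tr(\bg^\intercal\Lambda\bx)\geq 0$, and chaining it with the slack inequality gives \eqref{eq:dd_guarantee}.

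The only real subtlety is the bookkeeping around $b$. It must be chosen as the negated column sums so that the function-value terms cancel. In the function setting this is legitimate because $a$ and $b$ are independent. In the operator setting the choice is forced by symmetry, so nothing further is needed. I will also check the normalization $f_\star=h_\star-\tfrac12\norm{s_\star}^2-\tfrac q2\norm{x_\star}^2=0$, which holds since $y_\star=s_\star=0$ and $h_\star=0$. I expect no genuine obstacle: this is weak duality, and the work was already done in \cref{lem:aggregated_interpolation}.
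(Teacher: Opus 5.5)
Your proposal is correct and is essentially the paper's proof: set $a=-\Lambda\bbm{1}$ and $b=-\Lambda^\intercal\bbm{1}$, invoke \cref{lem:aggregated_interpolation} (using symmetry to get $a=b$ in the operator setting) to obtain $-\tr(\bg^\intercal\Lambda\bx)\geq 0$, and add this to the nonnegative slack. Your extra bookkeeping, namely that the $\bf$-term cancels by the choice of $b$ and that $f_\star=0$, is accurate and simply makes explicit what the paper leaves implicit.
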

\begin{proof}
Fix such a $\Lambda$ and set $a = -\Lambda\bbm{1} \geq 0$ and $b = -\Lambda^\intercal\bbm{1} \geq 0$. In the operator setting, $\Lambda$ is additionally symmetric and $a = b$. Applying \cref{lem:aggregated_interpolation} gives
\begin{align*}
    -\tr(\bg^\intercal\Lambda\bx) \geq 0.
\end{align*}
Adding this inequality to
\begin{align*}
    \textup{Slack}^{M_0,M_N}_{\tau,\Lambda,\tilde H} =
    \frac{\tau}{2}Q_{M_0}(y_0,s_0) - \frac{1}{2\tau}Q_{M_N}(y_N,s_N) + \tr(\bg^\intercal\Lambda\bx),
\end{align*}
which is nonnegative by assumption, completes the proof.
\end{proof}

\begin{remark}
Under a high-dimensional assumption (usually $d\geq N+1$ or $d\geq N+2$) and a strong duality assumption (which holds under minor conditions), the converse of \cref{lem:cert_to_rate} also holds, i.e., 
$\tau_\textup{PEP}^{M_0,M_N}(\tilde H)$ is exactly the infimal $\tau$ for which \eqref{eq:dd_guarantee} holds.
Both assumptions are standard in the PEP literature. See, for example, \cite{taylor2017interpolation}.
\end{remark}

\section{A primal explanation: extremal trajectories}
\label{sec:extremal}
This section presents a conditional explanation for H-duality, assuming that the worst-case convergence behavior for a given FSFOM is witnessed by an ``extremal'' function or operator (to be defined below). We observe that this assumption holds for many optimized (and also unoptimized) FSFOMs in both the function and operator settings.
This argument is very general and will even allow us to predict nonstandard ``rate invariance'' relationships.


\subsection{Extremal trajectories}

In the function setting,
there are two natural extremal one-dimensional objects. Define
\begin{align*}
    \Omega_\textup{fun}\coloneqq\set{\mu,1}
\end{align*}
and for $\sigma\in \Omega_\textup{fun}$
define the univariate quadratic function
\begin{align*}
    h_\sigma(y) \coloneqq \frac{\sigma}{2}y^2.
\end{align*}
Then, $y_i$ and $s_i$ are related by $s_i = \grad h_\sigma(y_i) = \sigma y_i$.

The functions $h_\sigma$ are extremal in the sense that the interpolation constraints of \cref{lem:interpolation} are guaranteed to hold at equality. In fact, as we will show in \cref{lem:primal_dual_extremal}, any function for which the interpolation constraints hold exactly at equality can be decomposed into a direct sum of these $h_\sigma$.


The corresponding extremal objects in the operator setting additionally have a rotational component that naturally lives in two dimensions.\footnote{This rotational component comes from the fact that 
when $\Lambda$ is symmetric, the linear function encoded by $\Lambda$ is orthogonal to skew-symmetries. This will be apparent in the proof of \cref{lem:primal_dual_extremal}.} 
We identify $\R^2$ with $\C$. 
Define
\begin{align*}
    \Omega_\textup{op}\coloneqq \set{\sigma\in\C:\, \abs{\sigma - \frac{1+\mu}{2}} = \frac{1-\mu}{2}}.
\end{align*}
Note that $\Omega_\textup{fun}$ is the restriction of $\Omega_\textup{op}$ to the real line.
For $\sigma \in \Omega_\textup{op}$, define
\begin{align*}
    T_\sigma(y) \coloneqq \left(1 - \frac{2\sigma}{1+\mu}\right)y = \frac{1-\mu}{1+\mu}\left(\frac{\frac{1+\mu}{2}-\sigma}{\frac{1-\mu}{2}}\right)y.
\end{align*}
The expression on the right shows that $T_\sigma$ is a rotation in the complex plane followed by a contraction, so that $T_\sigma$ is $\frac{1-\mu}{1+\mu}$-contractive.
The corresponding scaled residual is
\begin{align*}
    s_i = \frac{1+\mu}{2}(y_i - T_\sigma (y_i)) = \sigma y_i.
\end{align*}
Again, the operators $T_\sigma$ are extremal in the sense that the interpolation constraints of \cref{lem:interpolation} are guaranteed to hold at equality and the proof of \cref{lem:primal_dual_extremal} will show that any such operator can be decomposed into a direct sum of these $T_\sigma$.







In the remainder of this section, we will simply write $\Omega$ with the understanding that it is $\Omega_\textup{fun}$ in the function setting and $\Omega_\textup{op}$ in the operator setting.
The above discussion shows that, on any of the extremal operators or functions, the pair $[y_i, s_i]$ lies on a line parameterized by $\sigma\in\Omega$. Formally, for $\sigma\in\Omega$, define
\begin{align*}
    d_\sigma \coloneqq \begin{bmatrix}
    1\\
    \sigma
    \end{bmatrix}.
\end{align*}
Then, $[y_i, s_i] = d_\sigma y_i$ for all $i\in \cI_N$.
Thus, on these extremal trajectories, it suffices to understand the behavior of the $y_i$.

\begin{definition}
Given $\sigma \in\Omega$ and a strictly lower triangular $\tilde H\in\R^{\cI_N\times \cI_N}$, define
\begin{align*}
    \kappa(\sigma,\tilde H)&\coloneqq e_N^\intercal (I + \sigma S\tilde H)^{-1}\bbm{1}.\qedhere
\end{align*}
\end{definition}

\begin{lemma}
Fix $\sigma\in\Omega$ and a FSFOM $\tilde H\in\R^{\cI_N\times \cI_N}$. Suppose $s_i = \sigma y_i$ for all $i\in \cI_N$. Then,
\begin{align*}
    y_N = \kappa(\sigma, \tilde H) y_0\qquad\text{and}\qquad
    s_N = \sigma \kappa(\sigma, \tilde H) y_0.
\end{align*}
\end{lemma}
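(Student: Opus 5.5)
We prove this by substituting $s_i=\sigma y_i$ into the compact form of the FSFOM and solving the resulting triangular linear system. Recall from \cref{sec:preliminaries} that any trajectory of $\tilde H$ satisfies
\begin{align*}
    \by = -S\tilde H\bs + \bbm{1} y_0.
\end{align*}
In the operator setting, $\sigma$ may be complex and the $y_i$ are identified with elements of $\C$ via $\R^2\simeq\C$. Then multiplication by $\sigma$ acts as a real-linear map on each row. Because this map commutes with the real matrices $S$ and $\tilde H$, all manipulations below remain valid if we treat the rows as complex scalars. The hypothesis $s_i=\sigma y_i$ for every $i\in\cI_N$ means $\bs = \sigma\by$, so
\begin{align*}
    (I + \sigma S\tilde H)\by = \bbm{1} y_0.
\end{align*}

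Next we check that $I+\sigma S\tilde H$ is invertible. Since $S$ is lower triangular and $\tilde H$ is strictly lower triangular, $S\tilde H$ is strictly lower triangular. Hence $I+\sigma S\tilde H$ is unit lower triangular, and it is invertible for every $\sigma\in\C$. Its inverse is the finite Neumann sum $\sum_{k=0}^{N}(-\sigma S\tilde H)^k$. Solving gives $\by = (I+\sigma S\tilde H)^{-1}\bbm{1}\,y_0$. Reading off the $N$th row yields $y_N = e_N^\intercal (I+\sigma S\tilde H)^{-1}\bbm{1}\, y_0 = \kappa(\sigma,\tilde H) y_0$. The hypothesis at index $N$ then gives $s_N = \sigma y_N = \sigma\kappa(\sigma,\tilde H)y_0$.

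There is no substantial obstacle. The only point requiring care is the complex case, where $y_0\in\R^2$ is viewed as a complex number and $\kappa(\sigma,\tilde H)y_0$ means complex multiplication, i.e., a rotation-scaling of $y_0$. This is justified because $\kappa$ is a polynomial in $\sigma$ with real coefficients coming from $S\tilde H$, and complex multiplication is $\R$-linear on $\R^2$.
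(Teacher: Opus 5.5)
Your proof is correct and follows the paper's own argument: substitute $\bs=\sigma\by$ into $\by=-S\tilde H\bs+\bbm{1}y_0$, solve the resulting linear system, and read off the $N$th component. Your additional remarks are sound but only elaborate on what the paper leaves implicit: invertibility of $I+\sigma S\tilde H$ via unit lower-triangularity, the identification $\R^2\simeq\C$ in the operator setting, and $s_N=\sigma y_N$.
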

\begin{proof}
It holds that
\begin{align*}
    \by = -S\tilde H \bs + \bbm{1} y_0 = -\sigma S\tilde H \by + \bbm{1} y_0.
\end{align*}
Solving for $\by$ and taking its $N$th component gives
    $y_N = e_N^\intercal (I + \sigma S\tilde H)^{-1}\bbm{1} y_0 = \kappa(\sigma, \tilde H) y_0$.
\end{proof}

\subsection{H-duality for extremal methods}

\begin{definition}
    \label{def:extremal_method}
    Fix performance criteria $M_0,M_N\in\S^2_+$.
We say that $\tilde H$ is an \emph{extremal method} if 
\eqref{eq:dd_guarantee} holds at equality for one of the extremal trajectories with $y_0\neq 0$ and $\tau =\tau_\textup{PEP}^{M_0,M_N}(\tilde H)$.
\end{definition}

The following lemma explicitly computes the PEP rate for an extremal method $\tilde H$.
\begin{lemma}
    \label{lem:rate_for_extremal}
Suppose $\tilde H \in\R^{\cI_N\times \cI_N}$ is an extremal method for $(M_0,M_N)$. 
Furthermore, suppose $d_\sigma^* M_0 d_\sigma\neq 0$ for all $\sigma\in\Omega$.
Then,
\begin{align*}
    \tau_\textup{PEP}^{M_0,M_N}(\tilde H) = \max_{\sigma\in\Omega}\abs{\kappa(\sigma, \tilde H)}\sqrt{\frac{d_\sigma^* M_N d_\sigma}{d_\sigma^* M_0 d_\sigma}}.
\end{align*}
\end{lemma}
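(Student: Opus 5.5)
Set $\tau^\star := \tau_\textup{PEP}^{M_0,M_N}(\tilde H)$ and
\[
\rho := \max_{\sigma\in\Omega}\abs{\kappa(\sigma,\tilde H)}\sqrt{\tfrac{d_\sigma^* M_N d_\sigma}{d_\sigma^* M_0 d_\sigma}}.
\]
The plan is to prove $\tau^\star\ge\rho$ and $\tau^\star\le\rho$ separately. The lower bound uses weak duality (\cref{lem:cert_to_rate}) tested on the extremal trajectories. The upper bound uses the extremality hypothesis of \cref{def:extremal_method}.

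\emph{Lower bound.} Fix $\sigma\in\Omega$ and a nonzero $y_0$. In the function setting take $y_0\in\R$; in the operator setting take $y_0\in\C\simeq\R^2$. Run $\tilde H$ on $h_\sigma$ or on $T_\sigma$. By the lemma preceding the definition of extremal methods, $[y_N,s_N]=d_\sigma\kappa(\sigma,\tilde H)y_0$. A complex scalar acts as a rotation-dilation on $\R^2$, so $Q_M(y_N,s_N)=\abs{\kappa}^2\,(d_\sigma^*Md_\sigma)\abs{y_0}^2$. In the same way, $Q_{M_0}(y_0,s_0)=(d_\sigma^*M_0d_\sigma)\abs{y_0}^2$, which is strictly positive by the nondegeneracy assumption.

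The only point needing care is that $M$ is real symmetric. Because of this, the real quadratic form on $\R^2$ coming from the complex scalar equals $d_\sigma^*Md_\sigma\abs{y_0}^2$ (the hermitian form), and I would verify this identity directly. Now take any certificate with rate $\tau$. By \cref{lem:cert_to_rate}, $\frac{\tau}{2}Q_{M_0}\ge\frac{1}{2\tau}Q_{M_N}$, which gives $\tau^2\ge\abs{\kappa}^2\frac{d^*M_Nd}{d^*M_0d}$. Taking the infimum over certificates and then the maximum over $\sigma$ yields $\tau^\star\ge\rho$. Note that $\Omega$ is compact and $\kappa$ is continuous, since it is a polynomial in $\sigma$ because $S\tilde H$ is nilpotent. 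So the maximum exists.

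\emph{Upper bound.} By extremality, there is some $\sigma_0\in\Omega$ and a nonzero $y_0$ for which \eqref{eq:dd_guarantee} holds with equality at $\tau=\tau^\star$. The computation above turns this equality into $(\tau^\star)^2=\abs{\kappa(\sigma_0)}^2\frac{d^*M_Nd}{d^*M_0d}$, again using $d^*M_0d\ne0$. Hence $\tau^\star\le\rho$, and combining the two bounds gives equality.

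The main subtlety is interpretive. The extremal trajectories are genuine instances in the problem class, so that \cref{lem:cert_to_rate} applies to them; this holds because $h_\sigma$ and $T_\sigma$ are $1$-smooth and $\mu$-strongly convex, respectively $\frac{1-\mu}{1+\mu}$-contractive. One also has to handle the fact that the extremal equality holds at $\tau^\star$, an infimum that may not be attained. For the lower bound, I would apply \cref{lem:cert_to_rate} with certificates of rate $\tau_k\downarrow\tau^\star$ and pass to the limit, so attainment is never needed.
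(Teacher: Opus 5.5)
Your proposal is correct and follows the paper's approach: compute $Q_{M_n}(y_n,s_n)=\norm{y_n}^2\, d_\sigma^* M_n d_\sigma$ on the extremal trajectories, where $y_N=\kappa(\sigma,\tilde H)\,y_0$. Then combine weak duality over all $\sigma\in\Omega$ with the equality case at $\tau_\textup{PEP}$ guaranteed by extremality.

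The paper compresses both inequalities into the single phrase ``by definition of an extremal method''. Your version states the lower bound (via the weak-duality lemma) and the upper bound (via the equality at $\sigma_0$) explicitly. It also checks the real-versus-Hermitian quadratic form identity that the paper asserts in one line.
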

\begin{proof}
First, note that for $n\in\set{0,N}$, we have $Q_{M_n}(y_n, s_n) = \tr\left(y_n^* d_\sigma^* M_n d_\sigma y_n\right) = \norm{y_n}^2 d_\sigma^* M_n d_\sigma$.

Then, by definition of an extremal method
\begin{align*}
    \tau_\textup{PEP}^{M_0,M_N}(\tilde H) &= \max_{\sigma \in \Omega}\sqrt{\frac{Q_{M_N}(y_N,s_N)}{Q_{M_0}(y_0,s_0)}}
    = \max_{\sigma \in \Omega}\frac{\norm{y_N}}{\norm{y_0}}\sqrt{\frac{d_\sigma^* M_N d_\sigma}{d_\sigma^* M_0 d_\sigma}}
    = \max_{\sigma \in \Omega}\abs{\kappa(\sigma,\tilde H)}\sqrt{\frac{d_\sigma^* M_N d_\sigma}{d_\sigma^* M_0 d_\sigma}}.\qedhere
\end{align*}
\end{proof}

The following lemma states that $\kappa(\sigma,\tilde H)$ is invariant under antitransposition of $\tilde H$. We view this as the key mechanism underlying H-duality for extremal methods.

\begin{lemma}
\label{lem:extremal_kappa_invariance}
Suppose $\sigma \in\C$ and $\tilde H$ is strictly lower triangular. Then,
\begin{align*}
    \kappa(\sigma,\tilde H) = \kappa(\sigma,\tilde H^A).
\end{align*}
\end{lemma}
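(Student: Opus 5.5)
We need $e_N^\intercal (I+\sigma S\tilde H)^{-1}\bbm{1} = e_N^\intercal (I+\sigma S\tilde H^A)^{-1}\bbm{1}$. The plan is to write the scalar on the left as a transpose, rewrite it in terms of the reversal $R$, and then use two commutation facts.

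First, transpose. Since $\kappa$ is a scalar, it equals its own transpose:
\begin{align*}
\kappa(\sigma,\tilde H)=\bbm{1}^\intercal (I+\sigma \tilde H^\intercal S^\intercal)^{-1} e_N .
\end{align*}

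Second, insert the reversal. We use $R^2=I$, $R e_N = e_0$ and $\bbm{1}^\intercal R=\bbm{1}^\intercal$, and write $\tilde H^\intercal = R\tilde H^A R$ and $S^\intercal = R S^A R$. This gives
\begin{align*}
\kappa(\sigma,\tilde H)=\bbm{1}^\intercal (I+\sigma \tilde H^A S^A)^{-1} e_0 .
\end{align*}
Here $S^A = RS^\intercal R = S$, since the lower-triangular all-ones matrix is symmetric about its antidiagonal. Hence
\begin{align*}
\kappa(\sigma,\tilde H)=\bbm{1}^\intercal (I+\sigma \tilde H^A S)^{-1} e_0 .
\end{align*}

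Third, compare with the target. Set $G=\tilde H^A$, which is strictly lower triangular. We must show
\begin{align*}
\bbm{1}^\intercal (I+\sigma G S)^{-1} e_0 = e_N^\intercal (I+\sigma S G)^{-1}\bbm{1}.
\end{align*}
The key observations are $\bbm{1} = S e_0$ and $\bbm{1}^\intercal = e_N^\intercal S$. Both hold because column $0$ and row $N$ of $S$ are all ones. We also use the push-through identity
\begin{align*}
S(I+\sigma GS)^{-1} = (I+\sigma SG)^{-1} S,
\end{align*}
which is verified by multiplying out, exactly as in the earlier lemma on commuting with antitransposition. Combining these,
\begin{align*}
\bbm{1}^\intercal (I+\sigma GS)^{-1} e_0 = e_N^\intercal S (I+\sigma GS)^{-1} e_0 = e_N^\intercal (I+\sigma SG)^{-1} S e_0 = e_N^\intercal (I+\sigma SG)^{-1}\bbm{1},
\end{align*}
and the right-hand side is $\kappa(\sigma,\tilde H^A)$.

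The remaining concerns are minor. Invertibility holds because $S\tilde H$ and $\tilde H S$ are strictly lower triangular, so all the matrices being inverted are unipotent and the identity is valid for every $\sigma\in\C$. We also use the plain transpose rather than the conjugate transpose, so a complex $\sigma$ causes no issue. The main care point is bookkeeping: checking $S^A=S$ and the direction of the reversal in $R e_N = e_0$. Both are verified directly from the pictorial definitions of $R$ and $S$.
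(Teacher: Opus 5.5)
Your proof is correct and is essentially the paper's argument. The paper transposes and then conjugates in one step by $P = SR$, using $\tilde H^\intercal S^\intercal = P^{-1}S\tilde H^A P$, $\bbm{1}^\intercal P^{-1} = e_N^\intercal$, and $Pe_N = \bbm{1}$. You perform the same similarity in two stages: first by $R$ (using $S^A = S$), then by $S$ (via the push-through identity together with $Se_0 = \bbm{1}$ and $e_N^\intercal S = \bbm{1}^\intercal$).
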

\begin{proof}
We compute
\begin{align*}
    \kappa(\sigma,\tilde H) &= e_N^\intercal(I + \sigma S \tilde H)^{-1}\bbm{1}\\
    &= \bbm{1}^\intercal(I + \sigma \tilde H^\intercal S^\intercal)^{-1}e_N\\
    &= \bbm{1}^\intercal(I + \sigma P^{-1}S\tilde H^AP)^{-1}e_N\\
    &= \bbm{1}^\intercal P^{-1}(I + \sigma S\tilde H^A)^{-1} Pe_N\\
    &= e_N^\intercal (I + \sigma S\tilde H^A)^{-1} \bbm1\\
    &= \kappa(\sigma, \tilde H^A).\qedhere
\end{align*}
\end{proof}

H-duality for extremal methods then follows as a corollary for any pair of performance criteria satisfying a single identity.

\begin{theorem}[H-Duality for Extremal Methods]
    \label{thm:h_duality_for_extremal}
Fix a pair of performance criteria $(M_0,M_N)$ and $(M_0', M_N')$. Let $\tilde H\in\R^{\cI_N\times \cI_N}$ and suppose $\tilde H$ is extremal for $(M_0,M_N)$ and $\tilde H^A$ is extremal for $(M_0',M_N')$.
Furthermore, suppose
\begin{align*}
    \frac{d_\sigma^* M_N d_\sigma}{d_\sigma^* M_0 d_\sigma} = \frac{d_\sigma^* M_N' d_\sigma}{d_\sigma^* M_0' d_\sigma}\qquad\forall \sigma \in\Omega
\end{align*}
and that the denominators are nonzero for all $\sigma\in\Omega$.
Then,
\begin{align*}
    \tau_\textup{PEP}^{M_0,M_N}(\tilde H) = \tau_\textup{PEP}^{M_0',M_N'}(\tilde H^A).
\end{align*}
\end{theorem}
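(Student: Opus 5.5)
The plan is to compute both sides with \cref{lem:rate_for_extremal} and match them term by term using \cref{lem:extremal_kappa_invariance}.

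First, apply \cref{lem:rate_for_extremal} to $\tilde H$ with criteria $(M_0,M_N)$. Its hypotheses hold: $\tilde H$ is extremal for $(M_0,M_N)$ by assumption, and $d_\sigma^* M_0 d_\sigma\neq 0$ for all $\sigma\in\Omega$ is one of the nonvanishing-denominator assumptions. This gives
\begin{align*}
    \tau_\textup{PEP}^{M_0,M_N}(\tilde H) = \max_{\sigma\in\Omega}\abs{\kappa(\sigma,\tilde H)}\sqrt{\frac{d_\sigma^* M_N d_\sigma}{d_\sigma^* M_0 d_\sigma}}.
\end{align*}

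Second, apply the same lemma to $\tilde H^A$ with criteria $(M_0',M_N')$. This requires $\tilde H^A$ to be a strictly lower triangular FSFOM, which it is: antitransposing a strictly lower triangular matrix keeps it strictly lower triangular, since $(i,j)\mapsto(N-j,N-i)$ preserves $i>j$. It also requires $\tilde H^A$ to be extremal for $(M_0',M_N')$, which is assumed, and $d_\sigma^* M_0' d_\sigma\neq0$, which is also assumed. The result is the analogous expression with $\kappa(\sigma,\tilde H^A)$ and the primed criteria.

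Third, compare the two maxima pointwise in $\sigma$. By \cref{lem:extremal_kappa_invariance}, $\kappa(\sigma,\tilde H)=\kappa(\sigma,\tilde H^A)$ for every $\sigma\in\C$, and in particular on $\Omega$. By the hypothesized identity, the ratios under the square roots agree for every $\sigma\in\Omega$. The objectives are therefore identical functions on $\Omega$, so the maxima coincide and the claimed equality of PEP rates follows.

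The only step needing care is that the maximum in \cref{lem:rate_for_extremal} is well defined and taken over the same set $\Omega$ in both applications. It is: $\Omega$ depends only on the setting and $\mu$, not on the method or the criteria. Since the lemma is already stated with a maximum, I expect no real obstacle; the theorem is essentially a direct corollary.
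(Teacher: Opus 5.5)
Your proposal is correct and follows the paper's intended argument. The paper presents this theorem as an immediate corollary: apply \cref{lem:rate_for_extremal} to both $\tilde H$ and $\tilde H^A$, then match the two maxima pointwise on $\Omega$ using the antitranspose invariance of $\kappa(\sigma,\cdot)$ from \cref{lem:extremal_kappa_invariance} together with the assumed identity of the criterion ratios. Your check that $\tilde H^A$ remains strictly lower triangular is a detail the paper leaves implicit.
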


\begin{corollary}
Fix $\tilde H\in\R^{\cI_N\times \cI_N}$ and $q\geq 0$.
In the list below, if $\tilde H$ is extremal for the performance criteria on the left and $\tilde H^A$ is extremal for the performance criteria on the right, then the PEP rates are equal:
\begin{align*}
    \tau_\textup{PEP}^\textup{D,G}(\tilde H) &= \tau_\textup{PEP}^\textup{D,G}(\tilde H^A)\\
    \tau_\textup{PEP}^\textup{D,D}(\tilde H) &= \tau_\textup{PEP}^\textup{G,G}(\tilde H^A)\\
    \tau_\textup{PEP}^\textup{G,D}(\tilde H)&= \tau_\textup{PEP}^\textup{G,D}(\tilde H^A).
\end{align*}
The last two dualities additionally require $q>0$.
\end{corollary}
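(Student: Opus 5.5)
The corollary follows from \cref{thm:h_duality_for_extremal}. For each of the three pairings I only need to check two things. First, the ratio identity
\begin{align*}
\frac{d_\sigma^* M_N d_\sigma}{d_\sigma^* M_0 d_\sigma} = \frac{d_\sigma^* M_N' d_\sigma}{d_\sigma^* M_0' d_\sigma}
\end{align*}
must hold for all $\sigma\in\Omega$. Second, the denominators must be nonzero on $\Omega$. Extremality of $\tilde H$ and $\tilde H^A$ is a hypothesis of the corollary, and the antitranspose invariance of $\kappa$ is already built into the theorem through \cref{lem:extremal_kappa_invariance}.

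The computation is immediate. Since $d_\sigma = [1;\sigma]$, we have $d_\sigma^* M_D d_\sigma = 1$ and $d_\sigma^* M_G d_\sigma = \abs{\sigma}^2$, and these expressions hold in both the function setting ($\sigma$ real) and the operator setting ($\sigma$ complex).
\begin{itemize}
\item (D,G)$\leftrightarrow$(D,G): both ratios equal $\abs{\sigma}^2$, and the denominator is $1\neq 0$.
\item (D,D)$\leftrightarrow$(G,G): the left ratio is $1/1=1$ and the right ratio is $\abs{\sigma}^2/\abs{\sigma}^2=1$. The right denominator $\abs{\sigma}^2$ must be nonzero on $\Omega$.
\item (G,D)$\leftrightarrow$(G,D): both ratios equal $1/\abs{\sigma}^2$, again needing $\sigma\neq 0$ on $\Omega$.
\end{itemize}

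The only point needing care is the nonvanishing of $\abs{\sigma}^2$ on $\Omega$, which is where $q>0$ enters. Since $\mu = q/(1+q)$, we have $q>0$ if and only if $\mu>0$. In the function setting, $\Omega_\textup{fun} = \set{\mu,1}$ excludes $0$ exactly when $\mu>0$. In the operator setting, $\Omega_\textup{op}$ is the circle centered at $\frac{1+\mu}{2}$ with radius $\frac{1-\mu}{2}$. Its point closest to the origin is the real point $\frac{1+\mu}{2}-\frac{1-\mu}{2} = \mu$, so $\abs{\sigma}\geq \mu>0$ for all $\sigma\in\Omega_\textup{op}$. When $q=0$, the point $\sigma=0$ lies in $\Omega$ and the theorem's hypothesis fails, which explains why only the (D,G) pairing is stated without this restriction.

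With both conditions verified, \cref{thm:h_duality_for_extremal} gives the three equalities of PEP rates.
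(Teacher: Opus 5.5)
Your proposal is correct and follows the paper's proof. Both reduce the corollary to \cref{thm:h_duality_for_extremal} by checking the ratio identity via $d_\sigma^* M_D d_\sigma = 1$ and $d_\sigma^* M_G d_\sigma = \abs{\sigma}^2$, with $q>0$ used only to keep $\abs{\sigma}^2$ nonzero on $\Omega$; your bound $\abs{\sigma}\geq\mu$ on $\Omega_\textup{op}$ simply makes explicit a step the paper asserts without justification.
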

\begin{proof}
By \cref{thm:h_duality_for_extremal}, it suffices to check that for all $\sigma$,
\begin{align*}
\frac{d_\sigma^* M_N d_\sigma}{d_\sigma^* M_0 d_\sigma} = \frac{d_\sigma^* M_N' d_\sigma}{d_\sigma^* M_0' d_\sigma}.
\end{align*}

This is tautologically true in the (D,G)--(D,G) and (G,D)--(G,D) settings.
In the (D,D)--(G,G) setting,
\begin{align*}
    \frac{d_\sigma^* M_D d_\sigma}{d_\sigma^* M_D d_\sigma} = 
    \frac{1}{1} = \frac{\abs{\sigma}^2}{\abs{\sigma}^2} = \frac{d_\sigma^* M_G d_\sigma}{d_\sigma^* M_G d_\sigma}.
\end{align*}
The requirement $q>0$ ensures that $d_\sigma^* M_G d_\sigma > 0$ for all $\sigma\in\Omega$.
\end{proof}

\begin{remark}
\cref{thm:h_duality_for_extremal} 
not only explains the standard dualities: (D,D)--(G,G), (D,G)--(D,G), and (G,D)--(G,D), but further \emph{predicts} more general ``rate invariance'' or ``rate transfer'' phenomena. For example, consider the performance criteria
\begin{gather*}
    M_0 = \begin{bmatrix}
    1/2\\
    & 1/2
    \end{bmatrix}\qquad M_N = \begin{bmatrix}
    1\\
    & 0
    \end{bmatrix}\\
    M_0' = \begin{bmatrix}
    \rho-\mu\\
    1-\rho
    \end{bmatrix}\begin{bmatrix}
    \rho-\mu\\
    1-\rho
    \end{bmatrix}^\intercal \qquad M_N' = \begin{bmatrix}
    1+\mu\\
    -2
    \end{bmatrix}\begin{bmatrix}
    1+\mu\\
    -2
    \end{bmatrix}^\intercal
\end{gather*}
where $\rho = \sqrt{(1+\mu^2)/2}$.
These performance criteria satisfy the assumptions of \cref{thm:h_duality_for_extremal} over $\sigma \in \Omega_\textup{op}$ and correspond to the guarantees
\begin{gather*}
    \frac{\tau}{2} \left(\frac{1}{2}\norm{y_0}^2 + \frac{1}{2}\norm{s_0}^2\right) \geq \frac{1}{2\tau}\norm{y_N}^2\quad\text{and}\\
    \frac{\tau}{2}\norm{(\rho-\mu)y_0 + (1-\rho)s_0}^2 \geq \frac{1}{2\tau}\norm{(1+\mu)y_N - 2 s_N}^2
\end{gather*}
respectively. 
In the operator setting, the right-hand side on the second line is
$\frac{(1+\mu)^2}{2\tau}\norm{T(y_N)}^2$, thus this gives a guarantee on the distance between the final oracle output $T(y_N)$ and $y_\star$.
\cref{thm:h_duality_for_extremal} then implies that H-duality holds between these performance criteria whenever $\tilde H$ and $\tilde H^A$ are extremal in the two settings.
We emphasize that although the output metric $M_N=M_D$ is the distance-type criterion, $M_0'$ is \emph{not} the gradient-type criterion $M_G$.

One can verify numerically whether this predicts a genuine H-duality.
For example, taking $N = 5$, $q=0.1$, and $\tilde H$ sampled uniformly from a cube centered at gradient descent\footnote{Specifically, $\tilde H$ is strictly lower triangular with independent entries $\tilde H_{ij}$ drawn uniformly from $[0.9,1.1]$ for the entries on the first subdiagonal and uniformly from $[-0.1,0.1]$ everywhere else. Numerically, we declare that $\tau_\textup{PEP}^{M_0,M_N}(\tilde H) = \tau_\textup{PEP}^{M_0',M_N'}(\tilde H^A)$ if their relative error is less than $10^{-5}$. See \url{https://github.com/alexlihengwang/H-Duality} for experimentation details.}, 
we observe that $\tau_\textup{PEP}^{M_0,M_N}(\tilde H) = \tau_\textup{PEP}^{M_0',M_N'}(\tilde H^A)$
in 1000/1000 trials in both the function and operator settings.
This experiment indicates that these performance setups satisfy an H-duality on a full-dimensional set of $\tilde H$.
\end{remark}
\begin{remark}
    Note that in the function setting, the condition that
    \begin{align*}
        \frac{d_\sigma^* M_N d_\sigma}{d_\sigma^* M_0 d_\sigma} = \frac{d_\sigma^* M_N' d_\sigma}{d_\sigma^* M_0' d_\sigma}\qquad\forall \sigma \in\Omega
    \end{align*}
    reduces to the two conditions that 
    \begin{align*}
        \frac{d_\mu^{\intercal} M_N d_\mu}{d_\mu^{\intercal} M_0 d_\mu} = \frac{d_\mu^{\intercal} M_N' d_\mu}{d_\mu^{\intercal} M_0' d_\mu}\qquad\text{and}\qquad
        \frac{d_1^{\intercal} M_N d_1}{d_1^{\intercal} M_0 d_1} = \frac{d_1^{\intercal} M_N' d_1}{d_1^{\intercal} M_0' d_1}.
    \end{align*}

    On the other hand, in the operator setting, this condition becomes somewhat more interesting, as $\Omega_\textup{op}$ is now an infinite set of points.
    In this setting, the condition on the performance setups amounts to the equation of real polynomials (identifying $\C=\R^2$) on the circle $\Omega\subseteq\R^2$:
    \begin{align*}
        (d_\sigma^* M_N d_\sigma)(d_\sigma^* M_0' d_\sigma) = (d_\sigma^* M_0 d_\sigma)(d_\sigma^* M_N' d_\sigma)\qquad\forall \sigma \in\Omega.
    \end{align*}
    From this, we can arrive at the conclusion that the condition holds if and only if there exists some constant $c \in \R$ so that either
    \begin{align*}
        d_\sigma^* M_N d_\sigma = c d_\sigma^* M_N' d_\sigma \text{ and } d_\sigma^* M_0 d_\sigma = cd_\sigma^* M_0' d_\sigma\qquad\forall \sigma \in\Omega,
    \end{align*}
    or it holds that 
    \begin{align*}
        d_\sigma^* M_N d_\sigma &= c d_\sigma^* M_0 d_\sigma \text{ and }d_\sigma^* M_N' d_\sigma = c d_\sigma^* M_0' d_\sigma\qquad\forall \sigma \in\Omega.\qedhere
    \end{align*}
\end{remark}

\section{A dual explanation: certificate transformations}
\label{sec:congruences}
We will now turn to a \emph{certificate-level} explanation of H-duality for the dualities (D,G)--(D,G), (D,D)--(G,G), and (G,D)--(G,D). To be explicit, we will define a transformation of certificate matrices which preserves many of the properties of a certificate, and in many cases actually produces a certificate for the fact that the algorithm and its H-dual have the same performance characteristics.

In order to give a complete explanation of our transformation for certificates, we will need to introduce some new notions of certificates to simplify the exposition.
One issue that we will need to confront is that for several of the classical performance metrics, the optimal PEP certificates are often singular, i.e. they have nontrivial kernels.
In order to remove this issue, in \cref{subsec:reduced}, we will introduce the notion of `reduced' certificates in each of these sections that remove the issues of the kernels.

The other issue we will need to confront is that it is in general more difficult to preserve the sign conditions of a PEP certificate across the H-duality transformation.
Recall that a PEP certificate $\Lambda$ is required to satisfy a nonlinear SDP constraint, $\textup{Slack}^{M_0,M_N}_{\tau,\Lambda,\tilde H}(y_0,\bg)\geq 0$ everywhere, and linear constraints, $\Lambda\in \cP_\textup{cert}$.
\cref{subsec:extremal_certs} 
defines an \emph{unsigned certificate} $\Lambda$ to be one that satisfies the SDP constraint but not necessarily the linear constraints.
This definition is motivated by the extremal-methods explanation of H-duality, as formalized in \cref{subsec:extremal_certs}, and separates the PSD congruence question from a sign-preservation question.
\cref{subsec:lifted_certs} constructs lifted matrices $\cL^{M_0,M_N}_{\Lambda,\tilde H}$ such that
$\textup{Slack}^{M_0,M_N}_{\tau,\Lambda,\tilde H}(y_0,\bg)\geq 0$ everywhere if and only if $\cL^{M_0,M_N}_{\Lambda,\tilde H}\succeq 0$.

Then,
\cref{subsec:DD_duality,subsec:DG_duality,subsec:GD_duality} treat each of the dualities in~\eqref{eq:classical_dualities} and, in each setting, shows that for an appropriate notion of a reduced unsigned certificate $\Lambda$, the matrix $(P\Lambda P)^{-1}$ is a reduced unsigned certificate in the H-dual setting, i.e. if $\Phi = (P\Lambda P)^{-1}$, we have that
\begin{align*}
    \cL^{M_0,M_N}_{\Lambda,\tilde H} \equiv \cL^{M_0',M_N'}_{\Phi,\tilde H^A},
\end{align*}
where $\equiv$ denotes congruence.
This congruence holds for every $\Lambda \in \R^{\mathcal{I}_N \times \mathcal{I}_N}$, even if the slack is not everywhere nonnegative, or if $\Lambda$ is not in $\cP_{cert}$.
Consequently, the map sends an unsigned certificate for $\tilde H$ with specified criteria and rate $\tau$ to an unsigned certificate for $\tilde H^A$ with dual criteria and the same rate.

We then specialize to the case in which $\Lambda$ has a specific sparsity pattern, namely that it is tridiagonal. This has been a popular subject of study because many optimal algorithms have tridiagonal sparsity patterns. We give a simple unified proof that if $\Lambda$ is a reduced tridiagonal PEP certificate for an algorithm and one of the performance setups in \eqref{eq:classical_dualities}, then in fact, $(P\Lambda P)^{-1}$ is a reduced PEP certificate for the H-dual algorithm. In particular, this shows that the H-dual of ITEM, a minimax optimal algorithm for function minimization in the (D,D) setting, has the same rate of convergence in the (G,G) setting. This is the first time that the rate of convergence of the H-dual algorithm (which we refer to as ITEM-GG) has been rigorously analyzed, though its convergence had been analyzed numerically previously.

\subsection{Unsigned certificates}
\label{subsec:extremal_certs}

The definition of an extremal method presented in \cref{def:extremal_method} can be thought of as a ``primal'' definition since it is stated in terms of worst-case functions. The following definitions can be thought of as the corresponding ``dual'' definitions.

\begin{definition}
\label{def:extremal_cert}
    Fix $M_0,M_N\in\S^2_+$.
    Let $\tilde H\in\R^{\cI_N\times \cI_N}$ be strictly lower triangular and let $\tau > 0$. 
    
    \begin{itemize}
        \item We say that $\Lambda\in\R^{\cI_N\times \cI_N}$ is an \emph{unsigned certificate} of $\tilde H$ with rate $\tau$ if
    $\textup{Slack}^{M_0,M_N}_{\tau,\Lambda,\tilde H}(y_0, \bg)$
    is nonnegative for all $(y_0,\bg)$.
    \item We define the \emph{unsigned rate} to be the infimal $\tau$ for which an unsigned certificate exists:
    \begin{align}
    \label{eq:tau_extremal_general}
        \tau^{M_0,M_N}_\textup{unsigned}(\tilde H)&\coloneqq \inf_{\substack{\Lambda\in\R^{\cI_N\times \cI_N}\\\tau>0}}\set{\tau:\, 
        \textup{Slack}_{\tau,\Lambda,\tilde H}^{M_0,M_N}(y_0,\bg)\geq 0\quad\forall (y_0,\bg)
        }.
    \end{align}
    In the operator setting, we additionally require $\Lambda$ symmetric.\qedhere
    \end{itemize}
\end{definition}

Compared to the PEP rate $\tau_\textup{PEP}^{M_0,M_N}(\tilde H)$ defined in \eqref{eq:pep_rate}, the SDP above drops the sign constraints encoded by $\cP_\textup{cert}$.
The corresponding primal SDP (worst-case instance search) then requires that the interpolation conditions hold at \emph{equality} between every query point and optimizer. Intuitively, the feasible trajectories will be exactly direct sums of the extremal trajectories considered in the previous section.
The following proposition formalizes this intuition and states
that, under a strong duality assumption, $\tau_\textup{PEP}^{M_0,M_N}(\tilde H)=\tau_\textup{unsigned}^{M_0,M_N}(\tilde H)$ if and only if $\tilde H$ is extremal.
Its proof is orthogonal to the main thrust of this section and deferred to \cref{app:deferred}.

\begin{proposition}
    \label{lem:primal_dual_extremal}
    Fix $M_0,M_N\in \S^2_+$ and suppose $\tilde H \in\R^{\cI_N\times \cI_N}$ is strictly lower triangular. 
    Suppose $d_\sigma^* M_0 d_\sigma >0$ for all $\sigma\in\Omega$.
    Suppose that strong duality holds between the SDP defining $\tau_\textup{unsigned}^{M_0,M_N}(\tilde H)$ and its dual. Then, $\tilde H$ is extremal if and only if
\begin{align*}
    \tau_\textup{PEP}^{M_0,M_N}(\tilde H) =
    \tau_\textup{unsigned}^{M_0,M_N}(\tilde H).
\end{align*}
\end{proposition}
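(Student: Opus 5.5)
Since the unsigned SDP is a relaxation of the PEP SDP (it drops the constraint $\Lambda\in\cP_\textup{cert}$), we always have $\tau_\textup{unsigned}^{M_0,M_N}(\tilde H)\le \tau_\textup{PEP}^{M_0,M_N}(\tilde H)$. The plan is to identify $\tau_\textup{unsigned}$ with the worst case over extremal trajectories. Write $\tau_\textup{ext}\coloneqq\max_{\sigma\in\Omega}\abs{\kappa(\sigma,\tilde H)}\sqrt{d_\sigma^*M_Nd_\sigma/d_\sigma^*M_0d_\sigma}$, and prove $\tau_\textup{unsigned}=\tau_\textup{ext}$. Both directions of the proposition then follow quickly.

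\emph{Step 1: $\tau_\textup{ext}\le\tau_\textup{unsigned}$.} Take any unsigned certificate $\Lambda$ with rate $\tau$ and evaluate the slack on an extremal trajectory with $s_i=\sigma y_i$. There $x_i=(1-\sigma)y_i$ and $g_i=(\sigma-q(1-\sigma))y_i$. In the function setting, $\sigma\in\{\mu,1\}$ gives either $g=0$ (when $\sigma=\mu$) or $x=0$ (when $\sigma=1$), so $\tr(\bg^\intercal\Lambda\bx)=0$. In the operator setting, realize the complex trajectory in $\R^2$. The term $\tr(\bg^\intercal\Lambda\bx)$ with $\Lambda$ symmetric only sees the symmetric part of $\bx\bg^\intercal$, which is $\mathrm{Re}\big((1-\sigma)\overline{(\sigma-q(1-\sigma))}\big)\,\mathrm{Re}(\by\by^*)$ up to the rotational skew part. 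This coefficient vanishes exactly on the circle $\Omega_\textup{op}$, which is just the statement that the interpolation inequalities are tight there. So the slack equals $\frac{\tau}{2}Q_{M_0}-\frac{1}{2\tau}Q_{M_N}\ge0$ on this trajectory, and by the computation in \cref{lem:rate_for_extremal} this gives $\tau\ge$ the $\sigma$-term. Consequently, if $\tilde H$ is extremal then $\tau_\textup{PEP}=\tau_\textup{ext}$ (note that $\tau_\textup{ext}\le\tau_\textup{PEP}$ always holds by \cref{lem:cert_to_rate}), and therefore $\tau_\textup{PEP}=\tau_\textup{ext}\le\tau_\textup{unsigned}\le\tau_\textup{PEP}$.

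\emph{Step 2: $\tau_\textup{unsigned}\le\tau_\textup{ext}$ (the main obstacle).} Here I use strong duality. Write the slack as a quadratic form in $(y_0,\bg)$, namely $\langle C_\tau+\mathcal A(\Lambda),X\rangle$ with $X=\begin{bmatrix}y_0\\ \bg\end{bmatrix}\begin{bmatrix}y_0\\ \bg\end{bmatrix}^\intercal$. Because $\Lambda$ is free (or free symmetric), the dual feasible set consists of Gram matrices $X\succeq0$ with $\mathcal A^*(X)=0$. This means every entry (respectively, every symmetrized entry) of the matrix $\bx\bg^\intercal$ vanishes: $\ip{g_i,x_j}=0$ for all $i,j$, or $\ip{g_i,x_j}+\ip{g_j,x_i}=0$. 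In the function setting, factor $X$ and let $U=\spann\{x_i\}$, $V=\spann\{g_i\}$; these are orthogonal. Note $y=\frac{(1+q)x+g}{1+q}\cdot$(affine in) and $s=g+qx$, so decomposing along $U\oplus V\oplus(U+V)^\perp$ splits the trajectory into pieces with $g=0$ (i.e.\ $s=\mu y$) and pieces with $x=0$ (i.e.\ $s=y$). Linearity of $\tilde H$ keeps each piece a trajectory, and $y_0$ splits accordingly. In the operator setting, the condition says $\bx\bg^\intercal$ is skew. I would simultaneously block-diagonalize it by writing it as $\bx^\intercal\bg$-type operators, with the relation $g=Kx$ on the span for a skew $K$; the spectral decomposition of $K$ into $2\times2$ rotation blocks and a kernel yields complex $\sigma$ with $g=i\theta x$, which is exactly $\sigma\in\Omega_\textup{op}$. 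Each block is then an extremal trajectory $T_\sigma$. I expect this decomposition, especially making $g=Kx$ well defined on $\spann\{x_i\}$ and handling the degenerate parts $x=0$ (giving $\sigma=1$), to be the delicate step. Once it is done, the primal objective is a sum over blocks of $\frac{1}{2\tau}Q_{M_N}-\frac{\tau}{2}Q_{M_0}$ per block, each at most $0$ when $\tau\ge\tau_\textup{ext}$ (using $d_\sigma^*M_0d_\sigma>0$). Strong duality then gives $\tau_\textup{unsigned}\le\tau_\textup{ext}$.

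\emph{Step 3: conclusion.} For the converse direction, suppose $\tau_\textup{PEP}=\tau_\textup{unsigned}=\tau_\textup{ext}$. The maximum defining $\tau_\textup{ext}$ is attained, because $\Omega$ is compact and the expression is continuous by the positivity of the denominator. Take the maximizing $\sigma$ and any $y_0\ne0$. On that extremal trajectory, \eqref{eq:dd_guarantee} holds with equality at $\tau=\tau_\textup{PEP}$, so $\tilde H$ is extremal. The direction assuming extremality was completed in Step 1.
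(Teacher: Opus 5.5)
Your overall architecture matches the paper's: prove $\tau_\textup{unsigned}=\max_{\sigma\in\Omega}\abs{\kappa(\sigma,\tilde H)}\sqrt{d_\sigma^*M_Nd_\sigma/d_\sigma^*M_0d_\sigma}$ and read off both directions. Step 1 is correct. The coefficient you isolate is a multiple of $\mathrm{Re}((\sigma-\mu)\overline{(1-\sigma)})$, which vanishes exactly on $\Omega_\textup{op}$. It is also a small improvement that your forward direction uses only this weak-duality evaluation, with no strong duality. Step 3, and the function-setting half of Step 2 (orthogonality of $\spann\{x_i\}$ and $\spann\{g_i\}$, giving pieces with $\sigma=\mu$ and $\sigma=1$), coincide with the paper's argument.

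\textbf{The gap is the operator-setting decomposition in Step 2.} It is not a technicality. A dual-feasible configuration need not satisfy $g_i=Kx_i$ for any linear $K$, and the $\sigma=1$ component cannot be found by setting aside indices with $x_i=0$.

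Here is a counterexample. Take one step of gradient descent, $\tilde H_{10}=h\neq0$, in $\R^2$. Form the direct sum of the $\sigma=\mu$ extremal trajectory on the first axis and the $\sigma=1$ extremal trajectory on the second axis.
\begin{itemize}
\item Then $\ip{g_i,x_j}=0$ for all $i,j$, and no $x_i$ vanishes.
\item However, $x_1=(1-\mu h)x_0$ while $g_1=(1-h)g_0$.
\item So no linear map sends every $x_i$ to $g_i$.
\end{itemize}

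\textbf{The missing idea (the paper's)} is to work with $y_i$ and $z_i=y_i-\frac{2}{1+\mu}s_i$, i.e.\ with $(1+q)x_i+g_i$ and $(1+q)x_i-g_i$. The constraints, including the diagonal ones $\ip{g_i,x_i}=0$ and the index $\star$, say exactly that $\ip{z_i,z_j}=(\frac{1-\mu}{1+\mu})^2\ip{y_i,y_j}$ for all $i,j$. Hence $y_i\mapsto\frac{1+\mu}{1-\mu}z_i$ is automatically a well-defined linear isometry $U$. Extend $U$ to an orthogonal matrix on $\R^d$ and take its real canonical form. On each invariant $1$- or $2$-dimensional block, $U$ acts as multiplication by a unit $\gamma_\ell\in\C$. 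This gives $s_i=\sigma_\ell y_i$ on that block with $\sigma_\ell=\frac{1+\mu}{2}-\frac{1-\mu}{2}\gamma_\ell\in\Omega_\textup{op}$.

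Your skew $K$ is the Cayley transform of $U$. From $U((1+q)x_i+g_i)=(1+q)x_i-g_i$ one gets $(I+U)g_i=(1+q)(I-U)x_i$. Hence $g_i=(1+q)(I+U)^{-1}(I-U)x_i$ on the orthogonal complement of $\ker(I+U)$. Projecting the same relation onto $\ker(I+U)$ shows that $x_i$ has no component there. The eigenvalue $\gamma=-1$ is precisely the $\sigma=1$ part you were worried about, and it is $U$, not the $x_i$ alone, that tells you where to split.
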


\subsection{Lifted slack matrices}
\label{subsec:lifted_certs}

Rather than working with the slacks directly, we will lift them.



\begin{lemma}
    \label{lem:certificateLift}
    Let $q\geq 0$.
Fix $\tilde H$ strictly lower triangular and
$(M_0, M_N)\in\set{M_D,M_G}^2$. 
If $M_0 = M_G$, assume additionally that $q>0$.
Then $\Lambda$ is a PEP certificate for $(M_0, M_N)$ convergence with rate $\tau$ if and only if $\Lambda\in\cP_{\textup{cert}}$ and
\begin{align}
    \label{eq:lifted_slack}
    \cL_{\Lambda, \tilde H}^{M_0, M_N} \coloneqq \begin{bmatrix}
    -2\sym(\Lambda A) & \Lambda b & w\\
    \cdot & \tau & \zeta\\
    \cdot & \cdot & \tau
    \end{bmatrix}\succeq 0,
\end{align}
where we define
\begin{align*}
    &c_0 \coloneqq \begin{cases}
    1+q &\text{if }M_0 = M_D\\
    q&\text{if }M_0 = M_G
    \end{cases}&&
    c_N \coloneqq \begin{cases}
    1+q &\text{if }M_N = M_D\\
    q&\text{if }M_N = M_G
    \end{cases}\\
    &\theta\coloneqq\frac{1+q-c_0}{c_0}
    &&\eta_H \coloneqq  e_N^\intercal v_H
    &&A  \coloneqq SH + \theta v_H e_0^\intercal\\
    &b \coloneqq (1+\theta)v_H
    &&w \coloneqq (I- c_N SH)^\intercal e_N - c_N \theta \eta_H e_0
    &&\zeta \coloneqq c_N (1 +\theta) \eta_H.
\end{align*}
\end{lemma}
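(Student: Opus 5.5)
The plan is to rewrite the slack $\textup{Slack}^{M_0,M_N}_{\tau,\Lambda,\tilde H}(y_0,\bg)$ as a quadratic form in a new set of free variables $(\bg, z)$. In these variables the initial criterion becomes a multiple of $\norm{z}^2$, and the final criterion becomes the square of the linear form $w^\intercal\bg + \zeta z$. Once that is done, a Schur complement on the trailing $\tau$ block gives \eqref{eq:lifted_slack}. Because the sign conditions $\Lambda\in\cP_\textup{cert}$ appear identically on both sides of the claimed equivalence, only the SDP part needs proof.

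\textbf{Step 1 (change of variables).} From the definitions $\bx=\by-\bs$ and $\bg=\bs-q\bx$ we get $\bs = \bg + q\bx$ and $\by = (1+q)\bx + \bg$. In particular $y_0=(1+q)x_0+g_0$ and $s_0 = q x_0 + g_0$. By \cref{lem:x_identity}, $x_0 = e_0^\intercal v_H y_0 = y_0/(1+q)$, because $SH$ is lower triangular with $(SH)_{0,0}=\tilde H_{0,0}+(S^{-1})_{0,0}\cdot(\cdots)$ evaluating so that $e_0^\intercal(I-qSH)\bbm 1 = 1/(1+q)$; this will be checked directly.
\begin{itemize}
\item If $M_0=M_D$ (so $\theta=0$), take $z := x_0 = y_0/(1+q)$.
\item If $M_0=M_G$ (so $\theta = 1/q$), solve for $y_0$ in terms of $s_0$ and $g_0$. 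This is where $q>0$ is needed, so that the map $(y_0,\bg)\mapsto(z,\bg)$ is invertible.
\end{itemize}
In both cases the intended outcome is
\[
\bx = -A\bg + b z,\qquad Q_{M_0}(y_0,s_0) = \norm{z}^2
\]
after an appropriate normalization of $z$. The normalization is what produces the factors $(1+\theta)$ in $b$ and the rank-one correction $\theta v_H e_0^\intercal$ in $A$, the latter coming from substituting $y_0$ expressed through $z$ and $g_0$.

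\textbf{Step 2 (final criterion).} Take the $N$th row of $\bx=-A\bg+bz$ and use $y_N=(1+q)x_N+g_N$ and $s_N=qx_N+g_N$. Then $y_N$ or $s_N$ is $c_N x_N + g_N$, which expands to $(e_N - c_N A^\intercal e_N)^\intercal\bg + c_N b_N z$. Expanding $A^\intercal e_N = (SH)^\intercal e_N + \theta\eta_H e_0$ and $b_N = (1+\theta)\eta_H$ should give exactly $w^\intercal\bg + \zeta z$. Verifying the constants here for all four choices of $(M_0,M_N)$ is the main bookkeeping obstacle. I would do this check carefully with $\theta\in\{0,1/q\}$ and $c_N\in\{1+q,q\}$ kept symbolic.

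\textbf{Step 3 (Schur complement).} Substituting Steps 1 and 2 gives
\[
2\,\textup{Slack} = \tau\norm{z}^2 - \tfrac1\tau\norm{w^\intercal\bg+\zeta z}^2 - 2\tr(\bg^\intercal\Lambda A\bg) + 2\tr(\bg^\intercal \Lambda b z).
\]
This is a trace of a quadratic form in the stacked matrix $[\bg; z]\in\R^{(\cI_N\cup\{\cdot\})\times d}$, so it is nonnegative for all $d$-column inputs iff its coefficient matrix is PSD. That coefficient matrix is
\[
\begin{bmatrix}-2\sym(\Lambda A) & \Lambda b\\ \cdot & \tau\end{bmatrix} - \tfrac1\tau \begin{bmatrix} w\\ \zeta\end{bmatrix}\begin{bmatrix} w\\ \zeta\end{bmatrix}^\intercal.
\]
Since $\tau>0$, this matrix is PSD iff the bordered matrix $\cL^{M_0,M_N}_{\Lambda,\tilde H}$ is PSD, by the Schur complement with respect to the last diagonal entry $\tau$. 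Combining with the invertibility of the change of variables in Step 1 yields the stated equivalence.
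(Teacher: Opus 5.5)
\textbf{There is a genuine gap in Step 1.} Your overall architecture is the paper's: change variables to $(\bg,z)$ with $z$ the initial performance quantity, write $\bx$ and the final quantity linearly in $(\bg,z)$, then absorb $-\frac{1}{\tau}\norm{w^\intercal\bg+\zeta z}^2$ into a bordered matrix. The paper does the last step with an auxiliary variable $\epsilon$ and $\min_\epsilon \frac{\tau}{2}\norm{\epsilon}^2+\ip{\epsilon,p_N}=-\frac{1}{2\tau}\norm{p_N}^2$, which is the same Schur complement as yours. Steps 2 and 3 are correct as written; in particular $e_N-c_NA^\intercal e_N=w$ and $c_Nb_N=\zeta$ check out. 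The problem is Step 1. It is where $A$ and $b$ actually come from, you state it only as an ``intended outcome,'' and the concrete choice you do make is wrong.

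The identity $x_0=e_0^\intercal v_H y_0$ is false. \cref{lem:x_identity} gives $x_0=-e_0^\intercal SH\bg+e_0^\intercal v_H y_0$. Since $SH$ is lower triangular with diagonal entries $\frac{1}{1+q}$, we have $e_0^\intercal SH\bg=\frac{1}{1+q}g_0$, so $x_0=\frac{y_0-g_0}{1+q}$. Your own identity $y_0=(1+q)x_0+g_0$, written one line earlier, already shows this. Consequently, taking $z:=x_0$ when $M_0=M_D$ breaks both target identities:
\begin{itemize}
\item $\norm{y_0}^2=\norm{(1+q)z+g_0}^2\neq\norm{z}^2$;
\item $\bx=-(SH-v_He_0^\intercal)\bg+(1+q)v_Hz$, which is not $-A\bg+bz$ with $\theta=0$.
\end{itemize}
No rescaling of $z$ repairs this, because $x_0$ is not a multiple of $y_0$: the $g_0$ term survives with the wrong coefficient.

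The fix is to take $z=p_0=g_0+c_0x_0$, i.e.\ $z=y_0$ if $M_0=M_D$ and $z=s_0$ if $M_0=M_G$. Then $Q_{M_0}(y_0,s_0)=\norm{z}^2$ trivially, and
$y_0=(1+q)x_0+g_0=\frac{1+q}{c_0}(z-g_0)+g_0=(1+\theta)z-\theta g_0$.
This is an invertible change of variables exactly when $c_0>0$, which is where $q>0$ enters for $M_0=M_G$. Substituting into $\bx=-SH\bg+v_Hy_0$ gives $\bx=-(SH+\theta v_He_0^\intercal)\bg+(1+\theta)v_Hz=-A\bg+bz$, uniformly in both cases. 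After that, your Steps 2 and 3 complete the proof.
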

\begin{proof}
Within this proof, define
\begin{align*}
    p_i &\coloneqq g_i + c_i x_i = (c_i - q)y_i + (1 + q - c_i) s_i\qquad\text{for $i=0,N$}.
\end{align*}
It holds that
\begin{align*}
    p_0 = \begin{cases}
    y_0&\text{if }M_0 = M_D\\
    s_0&\text{if }M_0 = M_G
    \end{cases}\qquad\text{and}\qquad
    p_N = \begin{cases}
    y_N&\text{if }M_N = M_D\\
    s_N&\text{if }M_N = M_G
    \end{cases}.
\end{align*}

By definition, $\Lambda$ is a PEP certificate for $(M_0, M_N)$ convergence with rate $\tau$ if and only if $\Lambda\in\cP_{\textup{cert}}$ and
\begin{align}
    \label{eq:lifting_original_condition}
    \frac{\tau}{2}\norm{p_{0}}^2 - \frac{1}{2\tau}\norm{p_{N}}^2 + \tr(\bg^\intercal \Lambda \bx )\geq 0 \qquad\text{for all $(y_0,\bg)$}.
\end{align}
It suffices to verify that \eqref{eq:lifting_original_condition} holds if and only if \eqref{eq:lifted_slack}.

As a first step, introduce an auxiliary variable $\epsilon\in\R^d$ and note that \eqref{eq:lifting_original_condition} holds if and only if 
\begin{align}
    \label{eq:lifting_with_aux}
    \frac{\tau}{2}\norm{p_{0}}^2 + \frac{\tau}{2}\norm{\epsilon}^2 + \ip{\epsilon,p_{N}} + \tr(\bg^\intercal \Lambda \bx )\geq 0 \qquad\text{for all $(y_0,\bg,\epsilon)$}.
\end{align}
This follows because $\min_{\epsilon\in\R^d}\frac{\tau}{2}\norm{\epsilon}^2 + \ip{\epsilon,p_{N}} = - \frac{1}{2\tau}\|p_N\|^2$.

We claim that there is an invertible transformation
$(y_0,\bg,\epsilon)\leftrightarrow (\bg, p_0, \epsilon)$. Indeed,
\begin{align*}
    y_0 = (1+q) x_0 + g_0 = \frac{1+q}{c_0}(p_0-g_0) + g_0,
\end{align*}
where $c_0 >0$ by assumption. Thus, we may quantify \eqref{eq:lifting_with_aux} over $(\bg,p_0,\epsilon)$ instead.

It remains to write $\bx$ and $p_N$ as linear functions in $(\bg, p_0,\epsilon)$:
\begin{align*}
    \bx &= -SH \bg + v_H y_0
     = -\left(SH + \theta v_H e_0^\intercal\right) \bg + (1+\theta)v_H p_{0}
    =- A \bg + b p_{0}.
\end{align*}

Similarly,
\begin{align*}
    p_N &= g_N + c_N x_N 
    = e_N^\intercal(I - c_N SH) \bg + \eta_H c_N y_0
    = e_N^\intercal(I - c_N SH) \bg - \eta_H c_N \theta g_0 + \eta_H c_N (1+\theta)p_{0}
    = w^\intercal \bg + \zeta p_{0}.
\end{align*}

Rewriting the LHS of \eqref{eq:lifting_with_aux} as
\begin{align*}
    \frac{1}{2}\tr\left(\begin{bmatrix}
    \bg\\
    p_0\\
    \epsilon
    \end{bmatrix}^\intercal \begin{bmatrix}
    -2\sym(\Lambda A) & \Lambda b & w\\
    \cdot & \tau & \zeta\\
    \cdot & \cdot & \tau
    \end{bmatrix}\begin{bmatrix}
    \bg\\
    p_0\\
    \epsilon
    \end{bmatrix}\right)
\end{align*}
completes the proof.
\end{proof}

\subsection{Distance--Gradient self-duality}
\label{subsec:DG_duality}

This section treats the (D,G)--(D,G) duality. This is the simplest of the three dualities in \eqref{eq:classical_dualities} and serves as a model for the other two dualities, which are similar in spirit but algebraically denser.


We will repeatedly use the following identity in all three of our duality proofs. It is a special case of \cref{lem:extremal_kappa_invariance}, which was our ``key mechanism'' underlying H-duality in \cref{sec:extremal}.

\begin{lemma}
\label{lem:eN_vH}
Fix an arbitrary $\tilde H$ strictly lower triangular. Then,
    $e_N^\intercal v_{H} = e_N^\intercal v_{H^A}$.
\end{lemma}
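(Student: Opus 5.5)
From the proof of \cref{lem:x_identity}, $v_H = (I-qSH)\bbm{1} = B^{-1}\bbm{1}$ with $B = (1+q)I + qS\tilde H$. The plan is to rewrite $e_N^\intercal v_H$ as a value of $\kappa$ and then invoke \cref{lem:extremal_kappa_invariance}.

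The first step is the rewriting. Since $1+q>0$, we can factor
\begin{align*}
B = (1+q)\left(I + \tfrac{q}{1+q} S\tilde H\right) = (1+q)(I + \mu S\tilde H),
\end{align*}
using $\mu = \frac{q}{1+q}$. Hence
\begin{align*}
e_N^\intercal v_H = \frac{1}{1+q}\, e_N^\intercal (I+\mu S\tilde H)^{-1}\bbm{1} = \frac{1}{1+q}\,\kappa(\mu,\tilde H).
\end{align*}

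The second step is to redo the same computation with $\tilde H^A$ in place of $\tilde H$. We need to know that the vector $v_{H^A}$ attached to the matrix $H^A$ is $B'^{-1}\bbm{1}$ with $B' = (1+q)I + qS\tilde H^A$. This holds because the preceding lemma shows that $\tilde H\mapsto H$ commutes with antitransposition, so $H^A$ is exactly the $H$-matrix produced by the FSFOM $\tilde H^A$, which is also strictly lower triangular. Applying \cref{lem:x_identity}'s identity $I - qSH = B^{-1}$ to $\tilde H^A$ then gives $e_N^\intercal v_{H^A} = \frac{1}{1+q}\kappa(\mu,\tilde H^A)$.

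Finally, \cref{lem:extremal_kappa_invariance} holds for arbitrary $\sigma\in\C$, in particular $\sigma=\mu$, so $\kappa(\mu,\tilde H)=\kappa(\mu,\tilde H^A)$ and the claim follows. The only point needing care is the identification in the second step: $v_{H^A}$ must be read as the $v$ built from $H^A$ via $(I - qSH^A)\bbm{1}$, and that this equals $B'^{-1}\bbm{1}$ depends on the commutation lemma.
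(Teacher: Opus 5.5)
Your proposal is correct and follows the paper's proof: factor $(1+q)I + qS\tilde H = (1+q)(I+\mu S\tilde H)$ to get $e_N^\intercal v_H = \frac{1}{1+q}\kappa(\mu,\tilde H)$, then apply \cref{lem:extremal_kappa_invariance}. The paper leaves implicit the step of using the commutation lemma to identify $v_{H^A}$ with the vector built from $\tilde H^A$; you make it explicit and handle it correctly.
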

\begin{proof}
The proof of \cref{lem:x_identity} shows that
\begin{align*}
    I - q SH = ((1+q)I + q S\tilde H)^{-1} = \frac{1}{1+q}\left(I + \mu S\tilde H\right)^{-1}.
\end{align*}
Thus,
\begin{align*}
    e_N^\intercal v_H &= e_N^\intercal (I - q SH)\bbm1 = \frac{1}{1+q}e_N^\intercal \left(I + \mu S\tilde H\right)^{-1}\bbm1 = \frac{1}{1+q}\kappa(\mu,\tilde H).
\end{align*}
The result now follows from \cref{lem:extremal_kappa_invariance}.
\end{proof}

\begin{lemma}
    \label{lem:dg_block_calculations}
    Let $\tilde H$ strictly lower triangular. Define $A, b, w, \zeta$ as in \cref{lem:certificateLift} with respect to $\tilde H, (M_D, M_G)$.
Define $A^\vee, b^\vee, w^\vee, \zeta^\vee$ analogously as in \cref{lem:certificateLift} with respect to $\tilde H^A, (M_D, M_G)$. Then,
\begin{align*}
    &P^{-1} A^\vee P = A^\intercal
    &&P w^\vee = b
    &&P^{-1} b^\vee = w
    &&\zeta^\vee = \zeta.
\end{align*}
\end{lemma}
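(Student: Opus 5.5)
First specialize the definitions of \cref{lem:certificateLift} to $(M_0,M_N)=(M_D,M_G)$. Here $c_0=1+q$, so $\theta=0$, and $c_N=q$. This gives
\begin{align*}
A=SH,\qquad b=v_H,\qquad w=(I-qSH)^\intercal e_N,\qquad \zeta=q\,\eta_H .
\end{align*}
For $\tilde H^A$ the analogous objects are $A^\vee=SH^A$, $b^\vee=v_{H^A}$, $w^\vee=(I-qSH^A)^\intercal e_N$ and $\zeta^\vee=q\,e_N^\intercal v_{H^A}$. In the last expression we use that the map $\tilde H\mapsto H$ commutes with antitransposition, so the ``$H$'' associated with $\tilde H^A$ is $H^A$. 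The identity $\zeta^\vee=\zeta$ is then immediate from \cref{lem:eN_vH}.

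The remaining identities rest on two facts about $P=SR$: $P^\intercal=RS^\intercal=SR\cdot$? More precisely, I would first record the identities $RS^\intercal R=S$ and $P^\intercal = R S^\intercal = S R = P$ (the matrix $P$ is symmetric, as the picture shows). The first identity says that $S^\intercal$ is the antitranspose of $S$. Then
\begin{align*}
P^{-1}SH^AP=RS^{-1}S\,RH^\intercal R\,SR=H^\intercal RSR=H^\intercal S^\intercal=(SH)^\intercal,
\end{align*}
which is the claim $P^{-1}A^\vee P=A^\intercal$.

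For the vectors, I would use $Pe_N=SRe_N=Se_0=\bbm1$ and, since $P$ is symmetric, $\bbm1^\intercal P^{-1}=e_N^\intercal$, so that $P^{-1}\bbm1=e_N$. Then
\begin{align*}
Pw^\vee=P(I-q(SH^A)^\intercal)e_N=P(I-qP^{-1}SHP)^\intercal{}^{\!\!}\ldots
\end{align*}
To avoid transpose bookkeeping I would instead transpose the first identity, which gives $(SH^A)^\intercal=P^{\intercal}SHP^{-\intercal}=PSHP^{-1}$. Hence
\begin{align*}
Pw^\vee=P(I-qP\,SH\,P^{-1})e_N\;?
\end{align*}
This leftover $P$ must be handled carefully. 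Write $(SH^A)^\intercal=(P^{-1})^\intercal (SH)\,P^\intercal$ conjugated appropriately, recheck the conjugation from $P^{-1}SH^AP=H^\intercal S^\intercal$, and conclude $Pw^\vee=(I-qSH)Pe_N=(I-qSH)\bbm1=v_H=b$. Symmetrically, $P^{-1}b^\vee=P^{-1}(I-qSH^A)\bbm1=(I-q(SH)^\intercal)P^{-1}\bbm1=(I-qSH)^\intercal e_N=w$.

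The main obstacle is purely bookkeeping: verifying $P=P^\intercal$ and $RS^\intercal R=S$, and tracking which side $P$ versus $P^{-1}$ lands on in the conjugation $P^{-1}SH^AP=(SH)^\intercal$ when it is applied to $w^\vee$ and $b^\vee$. I would check these on small $N$ if needed; everything else follows from \cref{lem:eN_vH} and the commutation lemma.
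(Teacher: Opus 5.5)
Your proposal is correct and follows the paper's proof essentially line for line: set $\theta=0$, use $H^A=RH^\intercal R$ and $RSR=S^\intercal$ to get $P^{-1}SH^AP=H^\intercal S^\intercal$, use $Pe_N=\bbm1$ for the two vector identities, and invoke \cref{lem:eN_vH} for $\zeta^\vee=\zeta$. The one slip is your transposition step: transposing $P^{-1}SH^AP=(SH)^\intercal$ with $P=P^\intercal$ gives $(SH^A)^\intercal=P^{-1}SHP$, not $PSHP^{-1}$, and then $Pw^\vee=P(I-qP^{-1}SHP)e_N=(I-qSH)Pe_N=v_H=b$, exactly as you concluded.
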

\begin{proof}
Note that $\theta = \theta^\vee = 0$.
The first identity follows as
\begin{align*}
    P^{-1} A^\vee P &= P^{-1}SH^AP= H^\intercal S^\intercal= A^\intercal.
\end{align*}
The second identity follows as
\begin{align*}
    P w^\vee = P (I - q SH^A)^\intercal e_N = \bbm 1 - q PH^{A\intercal} S^\intercal e_N = (I - q S H)  \bbm 1 = v_H.
\end{align*}
The third identity follows as
\begin{align*}
    P^{-1} b^\vee = P^{-1} v_{H^A} = P^{-1}(I - q SH^A)Pe_N = (I - q P^{-1}SH^A P)e_N = (I - q SH)^\intercal e_N = w.
\end{align*}
The final identity is
\begin{align*}
    \zeta^\vee &= q\eta_{H^A} = q \eta_H = \zeta.\qedhere
\end{align*}
\end{proof}


\begin{theorem}
    \label{thm:DG}
    Let $\tilde H\in\R^{\cI_N\times \cI_N}$ be strictly lower triangular and let $\tau>0$. Let $\Lambda\in\R^{\cI_N\times \cI_N}$ be invertible and define
    \begin{align*}
        \Phi \coloneqq (P\Lambda P)^{-1}\qquad\text{and}\qquad
        \cR \coloneqq \begin{bmatrix}
        \Lambda P\\
        & 0 & 1\\
         & 1 & 0
        \end{bmatrix}
    \end{align*}
    Then,
    \begin{align*}
        \cL_{\Lambda,\tilde H}^\textup{D,G} = \cR \cL_{\Phi,\tilde H^A}^\textup{D,G}\cR^\intercal.
    \end{align*}
    It follows that $\Lambda$ is an unsigned certificate for (D,G) convergence of $\tilde H$ with rate $\tau$ if and only if $\Phi$ is an unsigned certificate for (D,G) convergence of $\tilde H^A$ with rate $\tau$.
    Moreover, if $\Lambda$ is symmetric, then $\Phi$ is also symmetric.
\end{theorem}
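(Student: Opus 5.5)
The plan is a direct block-by-block verification of the congruence, using \cref{lem:dg_block_calculations} as the only substantive input. I will write $\cL_{\Phi,\tilde H^A}^\textup{D,G}$ in block form with entries $-2\sym(\Phi A^\vee)$, $\Phi b^\vee$, $w^\vee$, $\tau$, $\zeta^\vee$, $\tau$, and compute $\cR\,\cL_{\Phi,\tilde H^A}^\textup{D,G}\,\cR^\intercal$. The lower-right $2\times 2$ permutation block swaps the roles of the last two rows and columns. Since both diagonal entries equal $\tau$ and the off-diagonal entry is $\zeta^\vee=\zeta$, the bottom-right $2\times 2$ block is unchanged, and it matches the one in $\cL_{\Lambda,\tilde H}^\textup{D,G}$.

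For the top-left block, I need $\Lambda P\,(-2\sym(\Phi A^\vee))\,P^\intercal\Lambda^\intercal = -2\sym(\Lambda A)$. I will use that $R$ is symmetric, so that $P^\intercal = RS^\intercal$ and $P^\intercal = R P R$. I expect the clean way is to check that $\Lambda P\Phi A^\vee P^\intercal\Lambda^\intercal$ equals either $\Lambda A$ or its transpose. Using $\Phi=P^{-1}\Lambda^{-1}P^{-1}$, we get $\Lambda P\Phi = P^{-1}$, so the product is $P^{-1}A^\vee P^\intercal\Lambda^\intercal$. From \cref{lem:dg_block_calculations}, $P^{-1}A^\vee = A^\intercal P^{-1}$, which gives $A^\intercal P^{-1}P^\intercal\Lambda^\intercal$.

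This is where the main obstacle lies. I need $P^{-1}P^\intercal$ to act compatibly, and in general $P^\intercal\neq P$. I should therefore recheck the convention: perhaps $P$ is symmetric. Indeed, $P=SR$ has ones on and below the antidiagonal, which is symmetric as pictured. So $P^\intercal=P$ and the product collapses to $A^\intercal\Lambda^\intercal=(\Lambda A)^\intercal$. Symmetrizing then gives the desired block.

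For the off-diagonal blocks, the swap sends the third column of $\cL_\Phi$ to the second. The $(1,2)$ block becomes $\Lambda P w^\vee=\Lambda b$ by \cref{lem:dg_block_calculations}. The $(1,3)$ block becomes $\Lambda P\Phi b^\vee = P^{-1}b^\vee = w$, again by \cref{lem:dg_block_calculations}. This establishes the congruence.

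Since $\Lambda$ is invertible, $\cR$ is invertible, so $\cL_\Lambda\succeq0$ if and only if $\cL_\Phi\succeq0$. By \cref{lem:certificateLift}, with the $\cP_\textup{cert}$ condition dropped (its proof shows that the slack is nonnegative if and only if the lifted matrix is PSD), this yields the claimed equivalence of unsigned certificates. Finally, if $\Lambda=\Lambda^\intercal$, then $\Phi^\intercal=(P\Lambda^\intercal P)^{-1}=\Phi$, again using the symmetry of $P$.
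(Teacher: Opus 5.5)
Your proposal is correct and is essentially the paper's argument written out in full: a block-by-block check of $\cR\,\cL^\textup{D,G}_{\Phi,\tilde H^A}\cR^\intercal$ via \cref{lem:dg_block_calculations}, using $\Lambda P\Phi = P^{-1}$ and $P^\intercal = P$, followed by invertibility of $\cR$ for the certificate equivalence and symmetry of $P$ for the symmetry claim. One harmless slip: your aside $P^\intercal = RPR$ is false, since $RPR = RS$ (the correct identity is $P^\intercal = RS^\intercal = P$), but you never use it.
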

\begin{proof}
The congruence claim follows immediately from \cref{lem:dg_block_calculations}.
Since $\cR$ is invertible, this shows that $\cL^\textup{D,G}_{\Lambda,\tilde H}$ and $\cL^\textup{D,G}_{\Phi,\tilde H^A}$ are congruent.
Since $P$ is symmetric, $\Lambda$ being symmetric implies $\Phi$ is symmetric.
\end{proof}

H-duality follows as an immediate corollary whenever the sign conditions are preserved.
\begin{corollary}
    \label{cor:DG_convergence}
    Let $\tilde H$ be strictly lower triangular and let $\tau >0$. Let $\Lambda\in\R^{\cI_N\times \cI_N}$ be a PEP certificate for (D,G) convergence of $\tilde H$ with rate $\tau$ and suppose $\Lambda$ is invertible. If $\Phi\coloneqq (P \Lambda P)^{-1}$ satisfies the linear conditions $\Phi\in\cP_\textup{cert}$, then $\Phi$ is a PEP certificate for (D,G) convergence of $\tilde H^A$ with rate $\tau$.
\end{corollary}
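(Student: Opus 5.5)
The plan is to combine \cref{lem:certificateLift} with \cref{thm:DG}. A PEP certificate is exactly an unsigned certificate that additionally lies in $\cP_\textup{cert}$. Since \cref{thm:DG} transfers the PSD condition, the only extra ingredient needed is the sign condition on $\Phi$, and that is exactly what the corollary assumes.

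First, since $\Lambda$ is a PEP certificate for (D,G) convergence of $\tilde H$ with rate $\tau$, \cref{lem:certificateLift} with $(M_0,M_N)=(M_D,M_G)$ gives $\cL^\textup{D,G}_{\Lambda,\tilde H}\succeq 0$. No assumption on $q$ is needed here, because $M_0=M_D$ and so $c_0=1+q>0$. Next, $\Lambda$ is invertible and $P$ is invertible, so $\Phi=(P\Lambda P)^{-1}$ is well defined. \cref{thm:DG} gives $\cL^\textup{D,G}_{\Lambda,\tilde H}=\cR\,\cL^\textup{D,G}_{\Phi,\tilde H^A}\cR^\intercal$ with $\cR$ invertible: $\Lambda P$ is invertible, and the $2\times 2$ swap block is invertible. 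Therefore
\[
\cL^\textup{D,G}_{\Phi,\tilde H^A}=\cR^{-1}\cL^\textup{D,G}_{\Lambda,\tilde H}\cR^{-\intercal}\succeq 0 .
\]
Equivalently, we can use the stated consequence of \cref{thm:DG} that $\Phi$ is an unsigned certificate for $\tilde H^A$ with rate $\tau$.

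Then apply \cref{lem:certificateLift} in the reverse direction, now to $\tilde H^A$, which is strictly lower triangular since antitransposition preserves strict lower triangularity. Together with the hypothesis $\Phi\in\cP_\textup{cert}$, this shows that $\Phi$ is a PEP certificate for (D,G) convergence of $\tilde H^A$ with rate $\tau$.

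In the operator setting, the definition also requires symmetry. A certificate $\Lambda$ in that setting is symmetric, and \cref{thm:DG} notes that $\Phi$ is then symmetric as well. I expect no real obstacle. The only points needing care are the invertibility of $\cR$, the fact that $\tilde H^A$ is strictly lower triangular, and the symmetry requirement in the operator setting.
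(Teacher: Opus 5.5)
Your proposal is correct and follows the paper's route: the paper presents this corollary as an immediate consequence of \cref{thm:DG}. There, the congruence by the invertible $\cR$ transfers the unsigned-certificate property from $\Lambda$ for $\tilde H$ to $\Phi$ for $\tilde H^A$, and the assumed condition $\Phi\in\cP_\textup{cert}$ (together with symmetry of $\Phi$ in the operator setting) then makes $\Phi$ a PEP certificate. Your checks that $\cR$ is invertible and that $\tilde H^A$ is strictly lower triangular are routine details the paper leaves implicit.
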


\begin{remark}
    \label{rem:beyond_tridiagonal}
    \cref{cor:DG_convergence} applies to any $\Lambda$ for which $\Phi\coloneqq (P\Lambda P)^{-1}$ satisfies $\Phi\in\cP_\textup{cert}$.
    The following experiment indicates that this property is not confined to $\Lambda$ with a particular support:
    We randomly sampled 100 $\tilde H$ matrices in the function and operator settings with $N=5$ and $q=0.1$ for which the (D,G) rates of $\tilde H$ and $\tilde H^A$ matched.\footnote{$\tilde H$ is sampled with i.i.d.\ entries uniform on $[0,2]$, $[-1/2,1/2]$, $[-1/4,1/4]$, $[-1/8,1/8]$, $[-1/16,1/16]$ on the 1st-5th subdiagonals. We reject the sample if the PEP rates for $\tilde H$ and $\tilde H^A$ differ by more than a relative error of $10^{-7}$. See \url{https://github.com/alexlihengwang/H-Duality} for implementation details.} For each pair, we ran a simple local search for PEP certificates $\Lambda$ and $\Phi$ for $\tilde H$ and $\tilde H^A$ respectively minimizing the error in $P\Lambda P \Phi = I$.
    The certificates found in this way had entrywise magnitude $P\Lambda P \Phi - I$ at most $10^{-10}$ in 99/100 instances in the function setting and 96/100 instances in the operator setting.

One representative trial in the function setting finds:
\begin{align*}
    \tilde H &= \begin{bmatrix}
     0.0   &   0.0  &    0.0   &  0.0  &   0.0  &    0.0\\
  0.2073  & 0.0   &  0.0   &  0.0 &    0.0  &    0.0\\
 -0.3162  & 0.1829  & 0.0   &   0.0  &    0.0  &    0.0\\
  0.012   & 0.0489 &  0.5554  &0.0  &   0.0  &    0.0\\
  0.0511  & 0.2496  & 0.1043 &  0.8162  & 0.0  &   0.0\\
 -0.0371  &-0.0501 & -0.239  &  0.1246 &  1.7041&  0.0
    \end{bmatrix}\\
    \Lambda &= \begin{bmatrix}
     -0.2804  & 0.0405 &  0.0087 &  0.03  &   0.0249 &  0.0017\\
  0.2787 & -1.0491 &  0.5149 &  0.0483 &  0.093  &  0.0033\\
  0.0008 &  0.6477 & -1.1367 &  0.1696 &  0.3102 &  0.0023\\
  0.0001 &  0.0065 &  0.5745 & -1.1818 &  0.5472  & 0.0042\\
  0.0001 &  0.1346 &  0.0049 &  0.8524 & -2.1216 &  0.9911\\
  0.0002 &  0.006  &  0.0197 &  0.0077  & 1.1048  &-1.5969
    \end{bmatrix}\\
    \Phi &= \begin{bmatrix}
     -0.7952  & 0.0663 &  0.0748  & 0.0504  & 0.0006 &  0.2594\\
  0.1121 & -1.0145  & 0.2342 &  0.2113 &  0.0004  & 0.2823\\
  0.0603  & 0.6891 & -1.4674 &  0.2489 &  0.0007 &  0.3693\\
  0.092  &  0.0224  & 0.554   &-1.2496 &  0.005 &   0.4243\\
  0.1544 &  0.0181 &  0.4416  & 0.5946 & -3.5725 &  2.1029\\
  0.157   & 0.089  &  0.0352  & 0.0063 &  3.5655 & -4.13
    \end{bmatrix}
\end{align*}
Note that $\Lambda$ and $\Phi$ are dense.
\end{remark}

\subsection{Reduced certificate space and restricted congruences}
\label{subsec:reduced}
The (D,D)--(G,G) and (G,D)--(G,D) H-duality proofs are similar in spirit but require heavier algebra.
There are two main reasons for this.
First, the PEP certificates in these settings naturally live in lower-dimensional $N\times N$ dimensional subspaces of $\R^{\cI_N\times \cI_N}$. These subspaces are also where the H-dualities naturally act. For example, in the (D,D) and (G,D) settings, the final gradient $s_N$ (residual in the operator setting) has no effect on the convergence guarantee so that a PEP certificate should only require nonzero $\Lambda_{ij}$ in the top-left $N\times N$ block.
Second, the lifted slacks in these settings also naturally have kernels. In particular, the congruence transformation $\cR$ naturally only acts on the orthogonal complement to these kernels.

Let $\bar S, \bar R,\bar P\in\R^{N\times N}$ denote the $N\times N$ analogues of $S,R,P\in\R^{I_N\times I_N}$.
Define the following matrices mapping $\R^N\to \R^{\cI_N}$ 
\begin{align*}
   U = \begin{bmatrix}
   -\bbm{1}_N^\intercal\\
   I_N
   \end{bmatrix},\qquad
   V = \begin{bmatrix}
   I_N\\
   0^\intercal
   \end{bmatrix},\qquad
    W = \begin{bmatrix}
   \bar R\\
   0^\intercal
   \end{bmatrix},
\end{align*}
where $\bbm1_N\in\R^N$ and $I_N\in\R^{N\times N}$ are the all-ones vector and identity matrix of the appropriate dimensions.

Given $\Lambda\in\R^{N\times N}$, define the embeddings of $\Lambda$ into $\R^{\cI_N\times \cI_N}$,
\begin{align*}
    \Lambda_{U} = U\Lambda U^\intercal,\qquad
    \Lambda_{V} = V\Lambda V^\intercal,\qquad
    \Lambda_{W} = W\Lambda W^\intercal.
\end{align*}
We will say that $\Lambda$ is a \emph{reduced certificate} for (D,D) convergence if $\Lambda_V$ is a PEP certificate for (D,D) convergence, and similarly we say that $\Lambda$ is an unsigned reduced certificate for (D,D) convergence if $\Lambda_V$ is an unsigned certificate for (D,D) convergence. We make analogous definitions for reduced certificates of (G,D) convergence with the embedding $\Lambda_U$, and reduced certificates of (G,G) convergence with the embedding $\Lambda_W$.

We will write $\bar P_W^{-1} \coloneqq (\bar P^{-1})_W$. Note that $(\bar P_W)^{-1}$ is not defined, so there is no chance of confusion.

The following lemma explains that these embeddings are the natural spaces where certificates live in the various settings.
We defer its proof to \cref{app:deferred}.
\begin{lemma}
    \label{lem:dd_gg_gd_certificate_form}
    Let $\tilde H\in\R^{\cI_N\times \cI_N}$ be strictly lower triangular and let $\tau >0$. If $\Lambda \in\R^{\cI_N\times \cI_N}$ is an unsigned certificate of (D,D) convergence, then there exists an unsigned reduced certificate for (D,D) convergence with the same rate.
    Furthermore, for $\tilde\Lambda\in\R^{N\times N}$, $\tilde \Lambda\in\cP_\textup{cert}$ if and only if $(\tilde \Lambda)_V\in\cP_\textup{cert}$.
    The same holds for reduced (G,G) certificates, and reduced (G,D) certificates.
\end{lemma}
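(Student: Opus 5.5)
The $\cP_\textup{cert}$ equivalences are direct entrywise computations, and I would do them first. For $V$, the matrix $\tilde\Lambda_V$ is $\tilde\Lambda$ padded with a zero last row and column. Its off-diagonal entries are those of $\tilde\Lambda$ together with zeros, and its row and column sums are those of $\tilde\Lambda$ together with one extra $0$. So $\tilde\Lambda_V\in\cP_\textup{cert}$ if and only if $\tilde\Lambda\in\cP_\textup{cert}$. For $W$ the same holds, since $\bar R$ is a permutation and so only relabels the entries before padding.

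For $U$, I would use $\bbm1^\intercal U=0$. This gives zero row and column sums for $\tilde\Lambda_U$. Its entries are $(\tilde\Lambda_U)_{0,j}=-(\bbm1_N^\intercal\tilde\Lambda)_j$, $(\tilde\Lambda_U)_{i,0}=-(\tilde\Lambda\bbm1_N)_i$, and $(\tilde\Lambda_U)_{0,0}=\bbm1_N^\intercal\tilde\Lambda\bbm1_N$, while the lower block is $\tilde\Lambda$ itself. Hence the off-diagonal entries of $\tilde\Lambda_U$ are nonnegative exactly when $\tilde\Lambda$ has nonnegative off-diagonal entries and nonpositive row and column sums, i.e. when $\tilde\Lambda\in\cP_\textup{cert}$.

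For the existence of a reduced unsigned certificate in the (D,D) case, the structural fact is that $y_N$ depends only on $s_0,\dots,s_{N-1}$ (because $\tilde H$ is strictly lower triangular) and on $y_0$. So the criteria $Q_{M_D}(y_0,s_0)$ and $Q_{M_D}(y_N,s_N)$ do not involve $s_N$, and $s_N$ enters $\textup{Slack}$ only through the last row and column of $\Lambda$. I would change variables from $(y_0,\bg)$ to $(y_0,s_0,\dots,s_{N-1},s_N)$, which is invertible. In these variables $x_N=y_N-s_N$ and $g_N=(1+q)s_N-qy_N$. The slack is then a quadratic in $s_N$ with leading coefficient $-(1+q)\Lambda_{N,N}$, which must be nonnegative.

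I would then split into two cases. If $\Lambda_{N,N}=0$, the part of the slack linear in $s_N$ must vanish identically. I would use that identity to rewrite the $N$-th row and column contributions as an equivalent bilinear form supported on the top-left $N\times N$ block, which gives a reduced $\tilde\Lambda$ with $\tilde\Lambda_V$ having the same slack. If $\Lambda_{N,N}<0$, I would not minimize over $s_N$, since that produces a square of a linear form, which is not of the bilinear form $\tr(\bg^\intercal\Lambda\bx)$. Instead I would exploit the freedom in writing the slack: add multiples of the identity $x_N=\frac{1}{1+q}(g_N+\dots)$ relating the last variables to the earlier ones, so as to transfer the last row and column into the top block. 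This corresponds to congruence by an elementary matrix in the lifted slack of \cref{lem:certificateLift}, whose kernel contains a direction associated with $s_N$.

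The (G,G) case follows the same argument with the roles of first and last reversed. Here the $0$-th row and column are eliminated after conjugating by $R$, which is why $W$ involves $\bar R$. For (G,D), the free direction is the all-ones combination that enters through $\bbm1_N$ in $U$, since the initial criterion $s_0$ and the final criterion $y_N$ together leave a one-dimensional kernel.

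I expect the main obstacle to be the $\Lambda_{N,N}\neq 0$ case. There I need to show concretely that the kernel direction of the lifted slack lets the last row and column be absorbed while keeping the form bilinear in $(\bg,\bx)$. My first attempt would be to identify that kernel vector of $\cL^{M_0,M_N}_{\Lambda,\tilde H}$ explicitly and verify that PSD-ness forces the offending entries to vanish after a suitable reparametrization.
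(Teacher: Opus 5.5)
Your $\cP_\textup{cert}$ computations for $V$, $W$, $U$ are correct, and so is $\Lambda_{N,N}\le 0$. Your handling of $\Lambda_{N,N}=0$ through the vanishing linear coefficient is workable; the paper instead uses the sign conditions, which force the last row and column to vanish there.

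\textbf{Gap in the main (D,D) case.} The gap is the case $\lambda\coloneqq\Lambda_{N,N}<0$, and the mechanism you propose for it cannot work. Write $\Lambda=\begin{bmatrix}\hat\Lambda & a\\ b^\intercal & \lambda\end{bmatrix}$ and use $(1+q)x_N=y_N-g_N$. Since $y_0,y_N,\bar\bg,\bar\bx$ do not depend on $g_N$, the slack equals
\[
\frac{\tau}{2}\norm{y_0}^2-\frac{1}{2\tau}\norm{y_N}^2+\tr(\bar\bg^\intercal\hat\Lambda\bar\bx)+\frac{1}{1+q}\ip{y_N-g_N,a^\intercal\bar\bg}+\ip{g_N,b^\intercal\bar\bx}+\frac{\lambda}{1+q}\ip{g_N,y_N-g_N},
\]
which is strictly convex in the free variable $g_N$. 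Consequences:
\begin{itemize}
\item No identity can rewrite it as an equal expression free of $g_N$.
\item The lifted matrix has the positive diagonal entry $-2\lambda/(1+q)$ in the $g_N$ coordinate, so there is no kernel direction to exploit.
\end{itemize}

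The missing idea is to restrict rather than minimize: evaluate the slack at the particular value $g_N=-\lambda^{-1}a^\intercal\bar\bg$. This kills the coefficient of $y_N$. The two remaining terms $\pm\frac{1}{(1+q)\lambda}\norm{a^\intercal\bar\bg}^2$ cancel. What is left is exactly the slack of $\tilde\Lambda_V$ with $\tilde\Lambda=\hat\Lambda-ab^\intercal/\lambda$, the Schur complement of $\Lambda$ at its $(N,N)$ entry.

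In lifted terms, this is compression by an injective but non-invertible map, i.e., restriction to a subspace, not an invertible congruence. One direction of implication is all that is needed. Note that the reduced certificate differs from the top-left block $\hat\Lambda$ by a rank-one term, so you cannot hope to show the offending entries vanish.

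The paper also checks that $\Lambda\in\cP_\textup{cert}$ implies $\tilde\Lambda=\hat\Lambda+ab^\intercal/\abs{\lambda}\in\cP_\textup{cert}$, using $\bbm{1}^\intercal a,\bbm{1}^\intercal b\le\abs{\lambda}$. This is what makes reduced PEP certificates available later, and your plan does not track signs.

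\textbf{The (G,G) and (G,D) mechanisms are swapped.} You followed the paper's definition paragraph, which pairs (G,D) with $U$ and (G,G) with $W$. However, \cref{thm:DD,thm:GD} and the paper's proof use $\Lambda_U$ for (G,G) and $\Lambda_W$ for (G,D), and only this assignment is consistent:
\begin{itemize}
\item $\tilde\Lambda_W$ has zero last (not $0$-th) row and column. For such a matrix the (G,G) slack is unbounded below in $g_N$, because $s_N=\frac{1}{1+q}g_N+(\text{terms independent of }g_N)$.
\item (G,D) has no all-ones kernel, since shifting $y_0$ by $\delta$ with $\bs$ fixed moves $y_N$ to $y_N+\delta$.
\end{itemize}

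So (G,D) is handled by the same $g_N$-substitution as (D,D), with $\norm{y_0}^2$ replaced by $\norm{s_0}^2$, followed by relabeling with $\bar R$.

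(G,G) instead needs a translation argument. Set $a=-\Lambda\bbm{1}$, $b=-\Lambda^\intercal\bbm{1}$ and $\rho=-\bbm{1}^\intercal\Lambda\bbm{1}$; the case $\rho=0$ is trivial for signed certificates. Evaluate the slack on the shifted trajectory $y_0'=y_0+\delta$, $\bs'=\bs$ with $\delta=-b^\intercal\bx/\rho$. This leaves $s_0$ and $s_N$ unchanged and turns the slack into that of $\Gamma=\Lambda+ab^\intercal/\rho$. That matrix has zero row and column sums, hence equals $\tilde\Lambda_U$ where $\tilde\Lambda$ is its lower-right $N\times N$ block.
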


In the (D,G)--(D,G) setting of \cref{subsec:DG_duality}, we used the identity $\Lambda P \Phi = P^{-1}$ repeatedly. 
In the remaining settings, this quantity will need to be replaced with the following compressed versions. Its proof is deferred to \cref{app:deferred}.
\begin{lemma}
\label{lem:compressed_Lambda_P_Phi}
Let $\Lambda\in\R^{N\times N}$ be invertible and define $\Phi = (\bar P\Lambda \bar P)^{-1}$. Then,
\begin{gather*}
    \Lambda_U P \Phi_V = (I - e_0\bbm1^\intercal)P^{-1}\\
    \Lambda_W \bar P_W\Phi_W = \bar P^{-1}_W = (e_0\bbm1^\intercal - I + E_N)P^{-1},
\end{gather*}
where $E_N\coloneqq e_Ne_N^\intercal$.
\end{lemma}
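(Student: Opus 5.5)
The plan is to prove both identities by direct block computation. First I collapse the embeddings using $\Lambda\bar P\Phi=\bar P^{-1}$. Then I check the remaining explicit $(N+1)\times(N+1)$ identity by right-multiplying by $P$ and reading off rows. No earlier result is needed beyond the definitions of $S,R,P$ and their barred analogues. Throughout I use the entrywise description $P_{ij}=1$ iff $i+j\ge N$ (and $\bar P_{ij}=1$ iff $i+j\ge N-1$), which follows from $P=SR$.

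\emph{First identity.} Write $\Lambda_UP\Phi_V=U\Lambda\,(U^\intercal PV)\,\Phi V^\intercal$. The key observation is that $U^\intercal P V=\bar P$. Indeed, $PV$ consists of the first $N$ columns of $P$. Row $0$ of $P$ is $e_N^\intercal$, so it vanishes on these columns and the $-\bbm1_N$ part of $U^\intercal=[-\bbm1_N,\ I_N]$ contributes nothing. Rows $1,\dots,N$ restricted to columns $0,\dots,N-1$ have a one exactly when $i+j\ge N$, i.e.\ $(i-1)+j\ge N-1$, which is $\bar P$. Since $\Phi=(\bar P\Lambda\bar P)^{-1}$, we have $\Lambda\bar P\Phi=\bar P^{-1}$, and hence $\Lambda_UP\Phi_V=U\bar P^{-1}V^\intercal$. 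To identify this with $(I-e_0\bbm1^\intercal)P^{-1}$, I multiply on the right by $P$. The same index shift shows $V^\intercal P$ (the first $N$ rows of $P$) equals $[\,0\ \ \bar P\,]$. Therefore
\begin{align*}
U\bar P^{-1}V^\intercal P = U[\,0\ \ I_N\,] = \begin{bmatrix} 0 & -\bbm1_N^\intercal\\ 0 & I_N\end{bmatrix} = I-e_0\bbm1^\intercal,
\end{align*}
as required.

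\emph{Second identity.} Since $W^\intercal W=\bar R^2=I_N$, we get
\begin{align*}
\Lambda_W\bar P_W\Phi_W=W\Lambda\bar P\Phi W^\intercal=W\bar P^{-1}W^\intercal=\bar P^{-1}_W.
\end{align*}
For the explicit form, I again right-multiply by $P$. Using $W^\intercal P=\bar R[\,0\ \ \bar P\,]$, it suffices to compute $W\bar P^{-1}\bar R\bar P$. Since $\bar P=\bar S\bar R$, we have $\bar P^{-1}=\bar R\bar S^{-1}$ and $\bar R\bar P=\bar S^\intercal$, so $W\bar P^{-1}\bar R\bar P=\begin{bmatrix}\bar S^{-1}\bar S^\intercal\\ 0\end{bmatrix}$. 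The matrix $\bar S^{-1}$ is the first-difference operator, so $\bar S^{-1}\bar S^\intercal$ has row $0$ equal to $\bbm1^\intercal$ and row $i\ge1$ equal to $-e_{i-1}^\intercal$. Prepending the zero column and appending the zero row gives the following rows:
\begin{align*}
\text{row }0:&\ (0,1,\dots,1),\\
\text{row }i\ (1\le i\le N-1):&\ -e_i^\intercal,\\
\text{row }N:&\ 0.
\end{align*}
These are exactly the rows of $e_0\bbm1^\intercal-I+E_N$.

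The only real obstacle is bookkeeping of the index shifts between $\cI_N$ and $\{0,\dots,N-1\}$, specifically the two facts $U^\intercal PV=\bar P$ and $V^\intercal P=[\,0\ \ \bar P\,]$. I would verify both carefully entrywise, and sanity-check the whole computation for $N=1,2$.
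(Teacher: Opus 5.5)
Your proposal is correct and is essentially the paper's proof. Both arguments collapse the embeddings using $U^\intercal P V=\bar P$ (respectively $W^\intercal W=I_N$) together with $\Lambda\bar P\Phi=\bar P^{-1}$, and then verify the explicit forms by right-multiplying by $P$, using $V^\intercal P=[\,0\ \ \bar P\,]$ and $\bar R\bar P^{-1}\bar R\bar P=\bar S^{-1}\bar S^\intercal$; your first-difference reading of $\bar S^{-1}\bar S^\intercal$ is just the entrywise version of the paper's expansion $\bar S^{-1}(\bbm{1}\bbm{1}^\intercal-\bar S+I)$.
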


Finally, the following lemma states that it suffices to prove restricted congruences.
\begin{lemma}
    \label{lem:restricted_congruence}
Let $\cL_1,\,\cL_2$ be symmetric matrices and suppose $\cL_1=\cR \cL_2\cR^\intercal$ and 
\begin{align*}
    \ker(\cR)\subseteq \ker(\cL_2).
\end{align*}
Then, $\cL_1\succeq 0$ if and only if $\cL_2\succeq 0$.
\end{lemma}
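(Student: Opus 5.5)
The argument runs through quadratic forms: I will show that $\cL_1$ and $\cL_2$ take the same set of values, and PSD-ness is a property of that set. For any vector $z$, the hypothesis $\cL_1=\cR\cL_2\cR^\intercal$ gives
\begin{align*}
z^\intercal \cL_1 z = (\cR^\intercal z)^\intercal \cL_2 (\cR^\intercal z).
\end{align*}
So the quadratic form of $\cL_1$ is the quadratic form of $\cL_2$ evaluated on $\range(\cR^\intercal)$. This immediately gives the forward direction: if $\cL_2\succeq 0$, then $z^\intercal\cL_1 z\geq 0$ for every $z$, so $\cL_1\succeq 0$. This direction does not use the kernel hypothesis.

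For the converse, suppose $\cL_1\succeq 0$ and take an arbitrary vector $u$. I will decompose $u = u_1 + u_2$ with $u_1\in\range(\cR^\intercal)$ and $u_2\in\range(\cR^\intercal)^\perp = \ker(\cR)$; this is the standard orthogonal decomposition $\range(\cR^\intercal)\oplus\ker(\cR)$.

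By hypothesis $\ker(\cR)\subseteq\ker(\cL_2)$, so $\cL_2 u_2 = 0$. Since $\cL_2$ is symmetric, the cross terms vanish as well, and therefore $u^\intercal \cL_2 u = u_1^\intercal\cL_2 u_1$.

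Next, write $u_1 = \cR^\intercal z$ for some $z$. Then $u_1^\intercal\cL_2 u_1 = z^\intercal\cL_1 z\geq 0$, and hence $\cL_2\succeq 0$.

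The only step needing care is the decomposition $\R^n = \range(\cR^\intercal)\oplus\ker(\cR)$ together with the use of symmetry of $\cL_2$ to kill the cross terms $2u_1^\intercal\cL_2u_2$. Both are routine, so I do not expect a real obstacle. One point to keep in mind is that $\cR$ need not be square here: it only has to map the domain of $\cL_2$ to that of $\cL_1$, and the argument above does not use squareness anywhere.
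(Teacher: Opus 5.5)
Your proof is correct and is essentially the paper's argument. The paper splits the vector along $\ker(\cL_2)\oplus\range(\cL_2)$ and first rewrites the hypothesis as $\range(\cL_2)\subseteq\range(\cR^\intercal)$, whereas you split along $\ker(\cR)\oplus\range(\cR^\intercal)$ and use $\ker(\cR)\subseteq\ker(\cL_2)$ directly, with symmetry of $\cL_2$ removing the cross terms in both versions; the difference is cosmetic.
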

\begin{proof}
It is clear that $\cL_2\succeq 0$ implies $\cL_1\succeq 0$. Now, suppose $\cL_1\succeq 0$ and let $x$ be arbitrary.
We can decompose $x = y + z$ where $z\in \ker(\cL_2)$ and $y \in \range(\cL_2)$. By assumption, 
$\ker(\cR)\subseteq\ker(\cL_2)$ so that
$\range(\cL_2)\subseteq\range(\cR^\intercal)$. Thus, there exists $y'$ so that $\cR^\intercal y' = y$. Now,
\begin{equation*}
    x^\intercal \cL_2 x = (y + z)^\intercal \cL_2 (y+ z) = y^\intercal \cL_2 y = (y')^\intercal \cR \cL_2\cR^\intercal y' = (y')^\intercal \cL_1 y' \geq 0.\qedhere
\end{equation*}
\end{proof}

\subsection{Distance--Distance and Gradient--Gradient duality}
\label{subsec:DD_duality}

The following lemma replaces \cref{lem:dg_block_calculations}. Its proof is deferred to \cref{app:deferred_block_identities}.
\begin{lemma}
    \label{lem:DD_duality_blocks}
    Suppose $q>0$.
    Let $\tilde H$ strictly lower triangular. Define $A,b, w, \zeta$ as in \cref{lem:certificateLift} with respect to $\tilde H, (M_G, M_G)$. Define $A^\vee, b^\vee, w^\vee, \zeta^\vee$ analogously as in \cref{lem:certificateLift} with respect to $\tilde H^A, (M_D, M_D)$. Set $\eta = e_N^\intercal v_H = e_N^\intercal v_{H^A}$. Then, 
\begin{align*}
    &P^{-1}A^\vee P = A^\intercal - \frac{1}{q}e_0 v_H^\intercal
    && P w^\vee = b- \frac{1}{q}\bbm1
    && P^{-1} b^\vee = w + \eta e_0
    && \zeta^\vee = \zeta.
\end{align*}
Moreover,
\begin{align*}
    A\bbm1 = \frac{1}{q}\bbm1 \qquad\text{and}\qquad \bbm1^\intercal w = 0.
\end{align*}
\end{lemma}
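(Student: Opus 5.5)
The plan is to specialize the definitions of \cref{lem:certificateLift} to the two performance setups and then check each identity directly. The only nontrivial inputs will be the transport identity $P^{-1}SH^AP=(SH)^\intercal$ and \cref{lem:eN_vH}.

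\textbf{Specializing the parameters.} For $(M_G,M_G)$ applied to $\tilde H$ we have $c_0=c_N=q$ and $\theta=\frac{1}{q}$, which requires $q>0$. This gives
\begin{align*}
A &= SH+\tfrac1q v_He_0^\intercal, &
b &= (1+\tfrac1q)v_H,\\
w &= (I-qSH)^\intercal e_N-\eta e_0, &
\zeta &= (1+q)\eta,
\end{align*}
where we used $c_N\theta=1$ and $c_N(1+\theta)=1+q$. For $(M_D,M_D)$ applied to $\tilde H^A$ we have $c_0=c_N=1+q$ and $\theta^\vee=0$. This gives
\begin{align*}
A^\vee &= SH^A, &
b^\vee &= v_{H^A},\\
w^\vee &= (I-(1+q)SH^A)^\intercal e_N, &
\zeta^\vee &= (1+q)\eta_{H^A}.
\end{align*}
The identity $\zeta^\vee=\zeta$ then follows at once from \cref{lem:eN_vH}.

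\textbf{The transport identity.} The basic tool is $P^{-1}SH^AP=H^\intercal S^\intercal$. To verify it, use $P=SR$, $P^{-1}=RS^{-1}$, $R^2=I$, $RSR=S^\intercal$ and $H^A=RH^\intercal R$. This identity already appears in the proof of \cref{lem:dg_block_calculations}, and I will reuse the two consequences proved there:
\begin{align*}
P^{-1}v_{H^A} &= (I-qSH)^\intercal e_N, &
PH^{A\intercal}S^\intercal e_N &= SH\bbm1.
\end{align*}
The second follows from the computation $Pw^\vee=\bbm1-qPH^{A\intercal}S^\intercal e_N=(I-qSH)\bbm1$ in that proof.

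\textbf{Checking the four identities.} For the first,
\[
P^{-1}A^\vee P=(SH)^\intercal=A^\intercal-\tfrac1q e_0v_H^\intercal .
\]
For the third,
\[
P^{-1}b^\vee=(I-qSH)^\intercal e_N=w+\eta e_0 .
\]
For the second, note $Pe_N=Se_0=\bbm1$, so
\[
Pw^\vee=\bbm1-(1+q)SH\bbm1 .
\]
On the other side, expanding $v_H=\bbm1-qSH\bbm1$ gives
\[
b-\tfrac1q\bbm1=(1+\tfrac1q)(\bbm1-qSH\bbm1)-\tfrac1q\bbm1=\bbm1-(1+q)SH\bbm1,
\]
so the two agree.

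\textbf{The ``moreover'' claims.} Since $e_0^\intercal\bbm1=1$,
\[
A\bbm1=SH\bbm1+\tfrac1q(\bbm1-qSH\bbm1)=\tfrac1q\bbm1 .
\]
Also
\[
\bbm1^\intercal w=e_N^\intercal(I-qSH)\bbm1-\eta=e_N^\intercal v_H-\eta=0 .
\]

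The only step needing care is the transport identity $P^{-1}SH^AP=(SH)^\intercal$ and the bookkeeping of the $\theta$-dependent rank-one corrections in the $(G,G)$ setup. Everything else is linear substitution.
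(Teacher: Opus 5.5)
Your proposal is correct and follows essentially the same route as the paper: you specialize the parameters of \cref{lem:certificateLift} to the two setups, use the transport identity $P^{-1}SH^AP = H^\intercal S^\intercal$ together with $Pe_N=\bbm1$ for the first three identities, and invoke \cref{lem:eN_vH} for $\zeta^\vee=\zeta$. The only cosmetic difference is that you get $Pw^\vee$ by reusing $PH^{A\intercal}S^\intercal e_N = SH\bbm1$ from the (D,G) computation instead of transposing the transport identity directly, which is the same calculation.
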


\begin{theorem}
    \label{thm:DD}
Suppose $q>0$.
Let $\tilde H\in\R^{\cI_N\times \cI_N}$ be strictly lower triangular and let $\tau>0$. Let $\Lambda\in\R^{N\times N}$ be invertible and define
\begin{align*}
    \Phi \coloneqq (\bar P\Lambda \bar P)^{-1}\qquad\text{and}\qquad
    \cR \coloneqq \begin{bmatrix}
    \Lambda_U P\\
    & 0 & 1\\
     & 1 & 0
    \end{bmatrix}
\end{align*}
Then,
\begin{align*}
    \cL_{\Lambda_U,\tilde H}^\textup{G,G} = \cR \cL_{\Phi_V,\tilde H^A}^\textup{D,D}\cR^\intercal
\end{align*}
and $\Lambda_U$ is an unsigned certificate for (G,G) convergence of $\tilde H$ with rate $\tau$ if and only if $\Phi_V$ is an unsigned certificate for (D,D) convergence of $\tilde H^A$ with rate $\tau$.
Moreover, if $\Lambda$ is symmetric then $\Phi$ is also symmetric.
\end{theorem}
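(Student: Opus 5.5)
We prove the congruence blockwise, following the proof of \cref{thm:DG} but with the identities of \cref{lem:DD_duality_blocks} and \cref{lem:compressed_Lambda_P_Phi} in place of \cref{lem:dg_block_calculations}. We then pass from congruence to equivalence of PSD-ness via \cref{lem:restricted_congruence}.

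Write $\cL_2 = \cL^{\textup{D,D}}_{\Phi_V,\tilde H^A}$ with blocks $-2\sym(\Phi_V A^\vee)$, $\Phi_V b^\vee$, $w^\vee$, $\tau$, $\zeta^\vee$, $\tau$. Conjugating by $\cR$ swaps the last two rows and columns, so $\tau$ stays on the diagonal and $\zeta^\vee=\zeta$ sits off-diagonal. The remaining blocks are as follows.

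The top-left block of $\cR\cL_2\cR^\intercal$ is $-2\sym(\Lambda_U P\Phi_V A^\vee P\Lambda_U^\intercal)$. Write $\Lambda_U P\Phi_V A^\vee P = \Lambda_U P\Phi_V P (P^{-1}A^\vee P)$. Since $P$ is symmetric, \cref{lem:compressed_Lambda_P_Phi} gives
\[
\Lambda_U P\Phi_V P=(I-e_0\bbm1^\intercal).
\]
Then \cref{lem:DD_duality_blocks} gives $P^{-1}A^\vee P=A^\intercal-\tfrac1q e_0v_H^\intercal$. The product $(I-e_0\bbm1^\intercal)(A^\intercal-\tfrac1q e_0 v_H^\intercal)\Lambda_U^\intercal$ should reduce to $A^\intercal\Lambda_U^\intercal$ after using $\Lambda_U^\intercal$-annihilation facts: $\bbm1^\intercal U=0$, so $\bbm1^\intercal\Lambda_U=0$ and $\Lambda_U\bbm1=0$. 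Combining these with $A\bbm1=\tfrac1q\bbm1$ gives $\bbm1^\intercal A^\intercal\Lambda_U^\intercal=\tfrac1q\bbm1^\intercal\Lambda_U^\intercal=0$. What remains is the term $(I-e_0\bbm1^\intercal)e_0 v_H^\intercal$, which equals $0$ because $\bbm1^\intercal e_0=1$. Hence the top-left block is $-2\sym(\Lambda_U A)$.

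The two off-diagonal column blocks are handled the same way.
\begin{itemize}
\item $\Lambda_U P\Phi_V b^\vee = (I-e_0\bbm1^\intercal)P^{-1}b^\vee = (I-e_0\bbm1^\intercal)(w+\eta e_0)$. This equals $w$, using $\bbm1^\intercal w=0$ and the cancellation of the $\eta e_0$ terms. It is placed in the $\epsilon$-column because of the swap.
\item $\Lambda_U P w^\vee=\Lambda_U(b-\tfrac1q\bbm1)=\Lambda_U b$, again since $\Lambda_U\bbm1=0$. It is placed in the $p_0$-column.
\end{itemize}
This reproduces $\cL^{\textup{G,G}}_{\Lambda_U,\tilde H}$. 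I expect the main bookkeeping obstacle to be checking that all the correction terms $\tfrac1q e_0 v_H^\intercal$, $\tfrac1q\bbm1$, and $\eta e_0$ are exactly killed by the projector $I-e_0\bbm1^\intercal$ or by $\Lambda_U\bbm1=0$.

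For the equivalence of certificates, $\cR$ is singular because $\Lambda_U P$ has rank $N$. We apply \cref{lem:restricted_congruence} with $\cL_1=\cL^{\textup{G,G}}_{\Lambda_U,\tilde H}$, which requires $\ker\cR\subseteq\ker\cL_2$. The kernel of $\cR^\intercal$ in its first block is spanned by $\bbm1$, so $\ker\cR$ is spanned by a vector of the form $(z,0,0)$, where $z$ spans $\ker(\Lambda_UP)=P^{-1}\ker(\Lambda_U)$ in the first block. Now $\ker\Lambda_U$ is spanned by $\bbm1$, and $P^{-1}\bbm1=e_N$ up to reversal conventions, since $P e_N=\bbm1$. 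So $z=e_N$, and we must show $\cL_2 (e_N,0,0)=0$. This says that $\Phi_V e_N=0$, $\Phi_V^\intercal e_N=0$, $e_N^\intercal\Phi_V b^\vee=0$, and $e_N^\intercal w^\vee=0$. The first three follow from $V^\intercal e_N=0$. The last is
\[
e_N^\intercal w^\vee = e_N^\intercal(I-(1+q)SH^A)^\intercal e_N - (1+q)\theta\eta e_0^\intercal e_N = 1-(1+q)(SH^A)_{NN}.
\]
This should vanish, and I would verify it from $(I-qSH^A)=\tfrac1{1+q}(I+\mu S\tilde H^A)^{-1}$ together with $H^A=\hat H(I+qS\hat H)^{-1}$. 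Alternatively, derive it from $P w^\vee=b-\tfrac1q\bbm1$ by reading off the appropriate entry. The (D,D) slack does not depend on $g_N$, so this vector is genuinely in the kernel.

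Finally, symmetry of $\Phi$ follows since $\bar P$ is symmetric.
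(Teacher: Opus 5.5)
Your proposal is correct and follows the paper's proof essentially step for step. It uses the same blockwise verification via \cref{lem:compressed_Lambda_P_Phi} and \cref{lem:DD_duality_blocks}, then invokes \cref{lem:restricted_congruence} with $\ker\cR$ spanned by $(e_N,0,0)$. One small slip: the first block of $\cL_2(e_N,0,0)$ is $-\Phi_V A^\vee e_N-(A^\vee)^\intercal\Phi_V^\intercal e_N$, so what you need is $\Phi_V A^\vee e_N=0$, not $\Phi_V e_N=0$; this holds because $A^\vee=SH^A$ is lower triangular, and it also follows from your observation that the (D,D) lifted slack is independent of $g_N$.
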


\begin{proof}
Throughout this proof, abbreviate $\cL_1 = \cL_{\Lambda_U,\tilde H}^\textup{G,G}$ and $\cL_2 = \cL_{\Phi_V,\tilde H^A}^\textup{D,D}$ and set $\eta = e_N^\intercal v_H = e_N^\intercal v_{H^A}$.



The (1,1) block of $\cR\cL_2\cR^\intercal$ is
\begin{align*}
    -2 \Lambda_{U}P\sym(\Phi_{V} A^\vee )P\Lambda_{U}^\intercal 
    & = -2 \sym(\Lambda_{U}P\Phi_{V}A^\vee P\Lambda_{U}^\intercal)
   \\
    & = -2 \sym((I - e_0\bbm1^\intercal)(A^\intercal - \frac{1}{q}e_0v_H^\intercal ) \Lambda_{U}^\intercal)\\
    & = -2 \sym(A^\intercal \Lambda_{U}^\intercal).
\end{align*}
Here, the second line uses \cref{lem:compressed_Lambda_P_Phi,lem:DD_duality_blocks} and
the third line uses $A\bbm 1 = \frac{1}{q}\bbm 1$, $\bbm1\in\ker(\Lambda_U)$, and $e_0 \in\ker(I - e_0\bbm1^\intercal)$.

The (1,2) and (1,3) blocks of $\cR\cL_2\cR^\intercal$ are
\begin{align*}
    \Lambda_U P w^\vee &= \Lambda_U (b - \frac{1}{q}\bbm1)=\Lambda_U b,\\
    \Lambda_U P \Phi_V b^\vee &= (I- e_0\bbm1^\intercal)P^{-1} b^\vee= (I- e_0\bbm1^\intercal)(w+\eta e_0)
    = w.
\end{align*}
Here, the first line uses \cref{lem:DD_duality_blocks} and $\bbm 1 \in\ker(\Lambda_U)$ and the second line uses \cref{lem:compressed_Lambda_P_Phi,lem:DD_duality_blocks}, $e_0 \in \ker(I - e_0\bbm1^\intercal)$, and $\bbm1^\intercal w = 0$.

The (2,3) entry of $\cR\cL_2\cR^\intercal$ is $\zeta^\vee = \zeta$ by \cref{lem:DD_duality_blocks}.

We conclude that $\cL_1 = \cR\cL_2 \cR^\intercal$. As $\bar P$ is symmetric, $\Lambda$ being symmetric implies that $\Phi$ is symmetric.

It remains to invoke \cref{lem:restricted_congruence}. The kernel of $\cR$ is generated by $[e_N , 0, 0]$.
A direct calculation shows that this vector lies in $\ker(\cL_2)$.
\end{proof}

H-duality follows as an immediate corollary whenever the sign conditions are preserved.
\begin{corollary}
    \label{cor:DD_convergence}
Fix $q>0$.
Let $\tilde H$ be strictly lower triangular and let $\tau >0$. Let $\Lambda_U$ be a PEP certificate for (G,G) convergence of $\tilde H$ with rate $\tau$ and suppose $\Lambda$ is invertible. If $\Phi\coloneqq (\bar P \Lambda \bar P)^{-1}$ satisfies the linear conditions $\Phi\in\cP_\textup{cert}$, then $\Phi_V$ is a PEP certificate for (D,D) convergence of $\tilde H^A$ with rate $\tau$.
Conversely, let $\Phi_V$ be a PEP certificate for (D,D) convergence of $\tilde H^A$ with rate $\tau$ and suppose $\Phi$ is invertible. If $\Lambda\coloneqq (\bar P\Phi\bar P)^{-1}$ satisfies the linear conditions $\Lambda\in\cP_\textup{cert}$, then $\Lambda_U$ is a PEP certificate for (G,G) convergence of $\tilde H$ with rate $\tau$.
\end{corollary}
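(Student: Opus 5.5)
The plan is to derive both directions of the corollary from \cref{thm:DD} combined with \cref{lem:certificateLift} and the sign-condition part of \cref{lem:dd_gg_gd_certificate_form}. By \cref{lem:certificateLift}, a matrix is a PEP certificate exactly when two things hold: it lies in $\cP_\textup{cert}$, and its lifted slack is PSD. By \cref{thm:DD}, the PSD parts for $\Lambda_U$ (with (G,G) and $\tilde H$) and for $\Phi_V$ (with (D,D) and $\tilde H^A$) are equivalent. So only the sign conditions need attention.

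For the forward direction, I start from the assumption that $\Lambda_U$ is a PEP certificate for (G,G) convergence of $\tilde H$ with rate $\tau$. In particular it is an unsigned certificate, so $\cL^\textup{G,G}_{\Lambda_U,\tilde H}\succeq 0$. Since $\Lambda$ is invertible, \cref{thm:DD} applies with $\Phi=(\bar P\Lambda\bar P)^{-1}$ and gives $\cL^\textup{D,D}_{\Phi_V,\tilde H^A}\succeq 0$, i.e., $\Phi_V$ is an unsigned certificate with rate $\tau$. Next, the hypothesis $\Phi\in\cP_\textup{cert}$ together with \cref{lem:dd_gg_gd_certificate_form} gives $\Phi_V\in\cP_\textup{cert}$. \cref{lem:certificateLift} then shows that $\Phi_V$ is a PEP certificate.

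In the operator setting, symmetry of $\Phi$ is also required. This is supplied by the last clause of \cref{thm:DD}, since symmetry of $\Lambda_U$ together with injectivity of $U$ forces $\Lambda$ to be symmetric.

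For the converse, I take $\Phi$ invertible and set $\Lambda=(\bar P\Phi\bar P)^{-1}$. Since $\bar P$ is invertible, this gives $\Phi=(\bar P\Lambda\bar P)^{-1}$, and $\Lambda$ is invertible. So \cref{thm:DD} applies to this pair, and its ``if and only if'' transfers PSD-ness from $\cL^\textup{D,D}_{\Phi_V,\tilde H^A}$ back to $\cL^\textup{G,G}_{\Lambda_U,\tilde H}$. The hypothesis $\Lambda\in\cP_\textup{cert}$ plus the (G,D)/$U$-embedding version of \cref{lem:dd_gg_gd_certificate_form} gives $\Lambda_U\in\cP_\textup{cert}$. \cref{lem:certificateLift} then finishes the argument, using $q>0$ because $M_0=M_G$.

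The only delicate point is the sign transfer through the $U$ embedding, since $\Lambda_U$ adds a row and a column to $\Lambda$. If the cited lemma is read as covering only $V$, I would verify this directly. The row and column sums of $\Lambda_U=U\Lambda U^\intercal$ vanish, because $U^\intercal\bbm1=0$. The off-diagonal entries of $\Lambda_U$ in the $I_N$ block agree with those of $\Lambda$. The new entries are minus row sums of $\Lambda$ (first column), minus column sums (first row), and $\bbm1^\intercal\Lambda\bbm1$ on the diagonal. Since the diagonal entry is unconstrained, these nonnegativity conditions correspond exactly to $\Lambda\in\cP_\textup{cert}$.
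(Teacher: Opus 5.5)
Your proposal is correct and is essentially the paper's own argument, which is left implicit ("an immediate corollary"): \cref{thm:DD} transfers the PSD/unsigned part in both directions, the sign conditions transfer through the $V$ and $U$ embeddings, and your direct check that $\Lambda_U\in\cP_\textup{cert}$ if and only if $\Lambda\in\cP_\textup{cert}$ (via $U^\intercal\bbm1=0$) is right. The one detail you leave unstated is operator-setting symmetry in the converse, and it is immediate: $\Phi_V$ symmetric forces $\Phi$ symmetric because $V$ is injective, and then $\Lambda=(\bar P\Phi\bar P)^{-1}$ is symmetric because $\bar P$ is.
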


\subsection{Gradient--Distance self-duality}
\label{subsec:GD_duality}

The following lemma replaces \cref{lem:dg_block_calculations}. Its proof is deferred to \cref{app:deferred_block_identities}.

\begin{lemma}
    \label{lem:gd_blocks}
    Suppose $q>0$. Let $\tilde H$ strictly lower triangular. Define $A, b, w, \zeta$ with respect to $\tilde H, (M_G, M_D)$ as in \cref{lem:certificateLift}. Define $A^\vee,b^\vee, w^\vee, \zeta^\vee$ analogously as in \cref{lem:certificateLift} with respect to $\tilde H^A, (M_G, M_D)$. Set $\eta = e_N^\intercal v_H = e_N^\intercal v_{H^A}$. Then,
\begin{align*}
    &\bar P^{-1}_W A^\vee \bar P_W = A^\intercal (I - E_N)
    && -\bar P_W w^\vee = b - \frac{1+q}{q}\eta e_N
    && -\bar P_W^{-1} b^\vee = w
    && \zeta^\vee = \zeta
\end{align*}
Moreover,
\begin{align*}
    e_N^\intercal w = e_N^\intercal w^\vee = 0.
\end{align*}
\end{lemma}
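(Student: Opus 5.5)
The plan is to compute every quantity in the (G,D) setting directly. The algebraic inputs are the identities already established in the proof of \cref{lem:dg_block_calculations}:
\begin{gather*}
P^{-1}SH^AP = H^\intercal S^\intercal,\qquad Pe_N=\bbm1,\qquad P^{-1}\bbm1 = e_N,\\
P^{-1}v_{H^A} = (I-qSH)^\intercal e_N,\qquad P(I-qSH^A)^\intercal e_N = v_H,
\end{gather*}
together with $\eta_H=\eta_{H^A}=\eta$ from \cref{lem:eN_vH}.

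\textbf{Step 1: specialize the definitions.} For $(M_G,M_D)$ we have $c_0=q$, $c_N=1+q$ and $\theta = 1/q$. Hence
\begin{gather*}
A = SH+\tfrac1q v_He_0^\intercal,\qquad b = \tfrac{1+q}{q}v_H,\\
w=(I-(1+q)SH)^\intercal e_N-\tfrac{1+q}{q}\eta e_0,\qquad \zeta = \tfrac{(1+q)^2}{q}\eta,
\end{gather*}
and the $\vee$ versions are the same with $H^A$ in place of $H$. The identity $\zeta^\vee=\zeta$ is then immediate from \cref{lem:eN_vH}.

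\textbf{Step 2: the ``moreover'' claims.} We need the diagonal of $H$. Since $S^{-1}+\tilde H$ is lower triangular with unit diagonal and $B=(1+q)I+qS\tilde H$ is lower triangular with diagonal $1+q$, we get $H_{ii}=1/(1+q)$. The same holds for $H^A$. For $N\ge1$ this gives
\begin{align*}
e_N^\intercal w = 1-(1+q)\,e_N^\intercal SHe_N = 1-(1+q)H_{NN}=0,
\end{align*}
and identically $e_N^\intercal w^\vee=0$.

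\textbf{Step 3: an explicit description of $\bar P_W$.} Here $\bar P_W = W\bar P W^\intercal$, and its top-left $N\times N$ block is $\bar R\bar S\bar R\bar R=\bar R\bar S$. I will write $\bar P_W$ as a low-rank correction of $P$ composed with the projection $I-E_N$, mirroring the formula $\bar P^{-1}_W=(e_0\bbm1^\intercal-I+E_N)P^{-1}$ from \cref{lem:compressed_Lambda_P_Phi}. The relevant facts are:
\begin{itemize}
\item $\bar P_W$ kills $e_N$ and has range in $\{e_N\}^\perp$;
\item $\bar P_W^{-1}\bar P_W = I-E_N$;
\item the sign pattern makes $-\bar P_W$ and $-\bar P^{-1}_W$ the natural partners of $P$ and $P^{-1}$, which explains the minus signs in the statement.
\end{itemize}

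\textbf{Step 4: the three remaining identities.}

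For $-\bar P^{-1}_Wb^\vee$, substitute $b^\vee=\frac{1+q}{q}v_{H^A}$ and $\bar P^{-1}_W=(e_0\bbm1^\intercal-I+E_N)P^{-1}$. Using $P^{-1}v_{H^A}=(I-qSH)^\intercal e_N$, the computation reduces to evaluating $\bbm1^\intercal(I-qSH)^\intercal e_N$ and $e_N^\intercal(I-qSH)^\intercal e_N$. For the first, $\bbm1^\intercal(I-qSH)^\intercal e_N = e_N^\intercal(I-qSH)\bbm1=\eta$. For the second, $e_N^\intercal(I-qSH)^\intercal e_N = 1-qH_{NN}=1/(1+q)$. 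It should then remain to recombine the $\frac{1+q}{q}$ scalings into the form $(I-(1+q)SH)^\intercal e_N-\frac{1+q}{q}\eta e_0$.

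For $-\bar P_W w^\vee$, run the analogous computation using $P(I-qSH^A)^\intercal e_N=v_H$. The leftover $-\frac{1+q}{q}\eta e_N$ should come from the projection onto $\{e_N\}^\perp$ and the $e_0$ term of $w^\vee$.

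For $A^\vee$, use $P^{-1}SH^AP=H^\intercal S^\intercal$. The rank-one piece $\frac1q\bar P^{-1}_Wv_{H^A}e_0^\intercal\bar P_W$ should cancel, via the same $v_{H^A}$ computation, against the correction terms from $e_0\bbm1^\intercal$ and $E_N$ in $\bar P^{-1}_W$. The target is $(SH)^\intercal+\frac1q e_0v_H^\intercal$ right-multiplied by $I-E_N$.

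\textbf{Main obstacle.} I expect the $A^\vee$ identity to be the hardest, because it requires tracking how the boundary corrections of $\bar P_W$ and $\bar P_W^{-1}$ (rows and columns $0$ and $N$) interact with $SH$. If a clean conjugation formula for $\bar P_W$ is hard to find, the first fallback is to verify the identity entrywise on the blocks indexed by $\{0\}$, $\{1,\dots,N-1\}$ and $\{N\}$. These checks use only that $H$ is lower triangular with diagonal $1/(1+q)$, together with the vector identities for $v_H$ listed at the start.
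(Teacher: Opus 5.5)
Your route is the paper's: specialize to $c_0=q$, $c_N=1+q$, $\theta=1/q$, then compute directly using $\eta_H=\eta_{H^A}$, the $P$-conjugation identities from the (D,G) case, $\bar P_W^{-1}=(e_0\bbm{1}^\intercal-I+E_N)P^{-1}$, and $(SH)_{NN}=\frac{1}{1+q}$. Your arguments for $\zeta^\vee=\zeta$, for $e_N^\intercal w=e_N^\intercal w^\vee=0$, and for $-\bar P_W^{-1}b^\vee=w$ are complete and coincide with the paper's.

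The gap is that the $w^\vee$ and $A^\vee$ identities are only asserted (``should''). Both need $\bar P_W$ itself expressed through $P$. The properties listed in Step 3 (kills $e_N$, range in $\{e_N\}^\perp$, $\bar P_W^{-1}\bar P_W=I-E_N$) do not let you evaluate $\bar P_W w^\vee$ or the conjugation. The missing fact is
\[
\bar P_W=\bbm{1}\bbm{1}^\intercal-P,\qquad\text{equivalently}\qquad -\bar P_W=(I-\bbm{1}e_N^\intercal)P,
\]
which follows immediately from your own observation that the top-left block is $\bar R\bar S$. The projection that appears is the oblique one $I-\bbm{1}e_N^\intercal$ (onto $\{e_N\}^\perp$ along $\bbm{1}$), not $I-E_N$. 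With this formula the $w^\vee$ identity is short:
\[
-\bar P_Ww^\vee=Pw^\vee-(\bbm{1}^\intercal w^\vee)\bbm{1}.
\]
Here $Pw^\vee=\frac{1+q}{q}v_H-\frac1q\bbm{1}-\frac{1+q}{q}\eta e_N$, obtained by writing $I-(1+q)SH^A=\frac{1+q}{q}(I-qSH^A)-\frac1q I$ and using $Pe_0=e_N$. Also $\bbm{1}^\intercal w^\vee=-\frac1q$, and substituting gives $b-\frac{1+q}{q}\eta e_N$.

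For $A^\vee$, your predicted mechanism is only partly right: the rank-one piece does not cancel, it gets redistributed.
\begin{itemize}
\item By your $b^\vee$ computation and $e_0^\intercal\bar P_W=(\bbm{1}-e_N)^\intercal$, the rank-one piece is $\frac1q\bar P_W^{-1}v_{H^A}e_0^\intercal\bar P_W=-\frac{1}{1+q}w(\bbm{1}-e_N)^\intercal$.
\item Using $P^{-1}\bar P_W=e_N\bbm{1}^\intercal-I$, the main piece is $(e_0\bbm{1}^\intercal-I+E_N)(H^\intercal\bbm{1}\bbm{1}^\intercal-H^\intercal S^\intercal)$.
\end{itemize}
When you add the two pieces:
\begin{itemize}
\item The $E_N$ contribution $\frac{1}{1+q}e_N(\bbm{1}-e_N)^\intercal$ cancels the $e_N$-component of $w$.
\item The $H^\intercal\bbm{1}$-component of $w$ turns $H^\intercal S^\intercal-H^\intercal\bbm{1}\bbm{1}^\intercal$ into $H^\intercal S^\intercal(I-E_N)$.
\item The remaining $e_0$-row equals $\frac1q e_0v_H^\intercal(I-E_N)$ only after you invoke the scalar identity $\eta=1-q\,\bbm{1}^\intercal H\bbm{1}$, which your sketch does not mention.
\end{itemize}
So your split of $A^\vee$ does close, but only through the bookkeeping you flagged as the main obstacle. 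The paper avoids it by first rewriting
\[
A^\vee=SH^A(I-\bbm{1}e_0^\intercal)+\tfrac1q\bbm{1}e_0^\intercal
\]
and using $R(I-\bbm{1}e_0^\intercal)P=S^\intercal(I-E_N)$. After that, $(I-E_N)P^{-1}\bbm{1}=0$ and $(SH)_{NN}=\frac{1}{1+q}$ finish the computation in a few lines.
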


\begin{theorem}
    \label{thm:GD}
Suppose $q>0$.
Let $\tilde H\in\R^{\cI_N\times \cI_N}$ be strictly lower triangular and let $\tau>0$. Let $\Lambda\in\R^{N\times N}$ be invertible and define
\begin{align*}
    \Phi \coloneqq (\bar P\Lambda \bar P)^{-1}\qquad\text{and}\qquad
    \cR \coloneqq \begin{bmatrix}
    -\Lambda_W \bar P_W\\
    & 0 & 1\\
     & 1 & 0
    \end{bmatrix}
\end{align*}
Then,
\begin{align*}
    \cL_{\Lambda_W,\tilde H}^\textup{G,D} = \cR \cL_{\Phi_W,\tilde H^A}^\textup{G,D}\cR^\intercal
\end{align*}
and $\Lambda_W$ is an unsigned certificate for (G,D) convergence of $\tilde H$ with rate $\tau$ if and only if $\Phi_W$ is an unsigned certificate for (G,D) convergence of $\tilde H^A$ with rate $\tau$.
Moreover, if $\Lambda$ is symmetric then $\Phi$ is also symmetric.
\end{theorem}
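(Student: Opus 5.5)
The argument follows the template of \cref{thm:DD}. Abbreviate $\cL_1 = \cL^\textup{G,D}_{\Lambda_W,\tilde H}$ and $\cL_2 = \cL^\textup{G,D}_{\Phi_W,\tilde H^A}$, and compute $\cR\cL_2\cR^\intercal$ block by block, using \cref{lem:gd_blocks} together with the second identity of \cref{lem:compressed_Lambda_P_Phi}, namely $\Lambda_W\bar P_W\Phi_W = \bar P_W^{-1}$. Because the congruence matrix carries a minus sign, its off-diagonal blocks pick up a factor $-1$. This is consistent with the signs in \cref{lem:gd_blocks}.

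\textbf{(1,1) block.} It equals
\begin{align*}
-2\,\Lambda_W\bar P_W\sym(\Phi_W A^\vee)\bar P_W\Lambda_W^\intercal
= -2\sym\bigl(\Lambda_W\bar P_W\Phi_W A^\vee \bar P_W\Lambda_W^\intercal\bigr).
\end{align*}
Substituting $\Lambda_W\bar P_W\Phi_W=\bar P_W^{-1}$ turns this into $-2\sym(\bar P^{-1}_W A^\vee\bar P_W\Lambda_W^\intercal)$. By \cref{lem:gd_blocks} this is $-2\sym(A^\intercal(I-E_N)\Lambda_W^\intercal)$. The row $e_N$ of $W$ vanishes, so $E_N\Lambda_W^\intercal = 0$ and the block reduces to $-2\sym(A^\intercal\Lambda_W^\intercal) = -2\sym(\Lambda_W A)$, which is the (1,1) block of $\cL_1$.

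\textbf{(1,2) block.} It is $-\Lambda_W\bar P_W w^\vee = \Lambda_W(b - \frac{1+q}{q}\eta e_N) = \Lambda_W b$, again using $\Lambda_W e_N = 0$.

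\textbf{(1,3) block.} It is $-\Lambda_W\bar P_W\Phi_W b^\vee = -\bar P_W^{-1}b^\vee = w$ by \cref{lem:gd_blocks}.

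\textbf{Remaining entries.} The (2,3) entry is $\zeta^\vee=\zeta$. The swap of the last two coordinates exchanges the two $\tau$ diagonal entries, which are equal. Hence $\cL_1 = \cR\cL_2\cR^\intercal$.

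\textbf{Kernel condition.} The unsigned-certificate equivalence follows from \cref{lem:restricted_congruence}, once we check $\ker\cR\subseteq\ker\cL_2$. Since $\Lambda$ is invertible, $\ker(\Lambda_W\bar P_W)$ consists of the vectors $x$ with $W^\intercal \bar P_W x = \bar R\bar P\bar R W^\intercal x = 0$, because $W^\intercal W = I_N$. This forces $W^\intercal x=0$, i.e. $x\in\spann(e_N)$. So $\ker\cR$ is spanned by $[e_N,0,0]$.

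Applying $\cL_2$ to $[e_N,0,0]$ gives the column $\bigl(-\sym(\Phi_W A^\vee)e_N,\ 0,\ e_N^\intercal w^\vee\bigr)$. The last entry vanishes by \cref{lem:gd_blocks}, and the middle one is zero because $e_N^\intercal\Phi_W = 0$.

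The first entry is the main obstacle. It equals $-\frac12\bigl(\Phi_W A^\vee e_N + A^{\vee\intercal}\Phi_W e_N\bigr)$, and $\Phi_W e_N=0$ kills the second term. It therefore suffices to show $W^\intercal A^\vee e_N = 0$. To see this, write $A^\vee = SH^A + \theta v_{H^A}e_0^\intercal$, where $\theta = 1/q$ since $M_0 = M_G$. Because $H^A$ is lower triangular, $SH^Ae_N$ is a multiple of $e_N$: column $N$ of $H^A$ is supported on row $N$ only, and $S e_N = e_N$. The $\theta$ term is annihilated by $e_N$ ($N\ge1$). Hence $A^\vee e_N\in\spann(e_N)$ and $W^\intercal A^\vee e_N = 0$. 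The same structural fact should underlie the identity $\bar P_W^{-1}A^\vee\bar P_W = A^\intercal(I-E_N)$, so it is worth double-checking this computation carefully.

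\textbf{Symmetry.} Finally, $\bar P$ is symmetric, so $\Phi=(\bar P\Lambda\bar P)^{-1}$ is symmetric whenever $\Lambda$ is.
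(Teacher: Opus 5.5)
Your proposal is correct and follows the paper's proof essentially verbatim. You use the same block-by-block verification via \cref{lem:compressed_Lambda_P_Phi} and \cref{lem:gd_blocks}, and then invoke \cref{lem:restricted_congruence} with $\ker\cR$ spanned by $[e_N,0,0]$. Your explicit check that $[e_N,0,0]\in\ker\cL_2$ (via $W^\intercal e_N=0$ and lower triangularity of $H^A$) correctly fills in the step the paper only calls a direct calculation; the remaining blemishes are cosmetic (for instance $W^\intercal\bar P_W x=\bar P W^\intercal x$ rather than $\bar R\bar P\bar R W^\intercal x$, and a dropped factor of $2$ in $-2\sym(\Phi_W A^\vee)e_N$) and do not affect the argument.
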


\begin{proof}
Throughout this proof, abbreviate $\cL_1 = \cL_{\Lambda_W,\tilde H}^\textup{G,D}$ and $\cL_2 = \cL_{\Phi_W,\tilde H^A}^\textup{G,D}$ and set $\eta = e_N^\intercal v_H = e_N^\intercal v_{H^A}$.
Below, we verify that $\cL_1 = \cR \cL_2\cR^\intercal$.

The (1,1) block of $\cR \cL_2 \cR^\intercal$ is
\begin{align*}
    -2\Lambda_W \bar P_W\sym\left(\Phi_W A^\vee\right)\bar P_W\Lambda_W^\intercal 
    &= -2\sym\left(\Lambda_W\bar P_W\Phi_W A^\vee \bar P_W\Lambda_W^\intercal\right) \\
    &= -2\sym\left(\bar P_W^{-1} A^\vee \bar P_W\Lambda_W^\intercal\right) \\
    &= -2\sym\left(A^\intercal (I - E_N)\Lambda_W^\intercal\right) \\
    &= -2\sym\left(A^\intercal \Lambda_W^\intercal\right).
\end{align*}
Here, the second line uses \cref{lem:compressed_Lambda_P_Phi}, the third line uses \cref{lem:gd_blocks}, and the last line uses $e_N\in\ker(\Lambda_W)$.

The (1,2) and (1,3) blocks of $\cR\cL_2\cR^\intercal$ are
\begin{align*}
    -\Lambda_W\bar P_Ww^\vee
    &= \Lambda_W \left(b - \frac{1+q}{q}\eta e_N\right) = \Lambda_W b,\\
    -\Lambda_W \bar P_W \Phi_W b^\vee &= -\bar P_W^{-1} b^\vee = w.
\end{align*}
Here, the first line uses \cref{lem:gd_blocks} and
$e_N \in\ker(\Lambda_W)$. The second line uses 
\cref{lem:compressed_Lambda_P_Phi,lem:gd_blocks}.

The (2,3) entry of $\cR\cL_2\cR^\intercal$ is $\zeta^\vee = \zeta$ by \cref{lem:gd_blocks}.

We conclude that $\cL_1 = \cR\cL_2\cR^\intercal$. As $\bar P$ is symmetric, $\Lambda$ being symmetric implies $\Phi$ is symmetric.

It remains to invoke \cref{lem:restricted_congruence}. The kernel of $\cR$ is generated by $[e_N , 0, 0]$.
A direct calculation shows that this vector lies in $\ker(\cL_2)$.
\end{proof}

\begin{corollary}
    \label{cor:GD_convergence}
    Fix $q>0$.
Let $\tilde H$ be strictly lower triangular and let $\tau >0$. Let $\Lambda_W$ be a PEP certificate for (G,D) convergence of $\tilde H$ with rate $\tau$ and suppose $\Lambda$ is invertible. If $\Phi\coloneqq (\bar P \Lambda \bar P)^{-1}$ satisfies the linear conditions $\Phi\in\cP_\textup{cert}$, then $\Phi_W$ is a PEP certificate for (G,D) convergence of $\tilde H^A$ with rate $\tau$.
\end{corollary}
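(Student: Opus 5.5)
The statement is \cref{cor:GD_convergence}. It will follow from \cref{thm:GD} together with \cref{lem:certificateLift,lem:dd_gg_gd_certificate_form}, by the same pattern as \cref{cor:DG_convergence,cor:DD_convergence}. The plan is to separate the two requirements of a PEP certificate: the semidefinite (slack) condition is transported by the congruence of \cref{thm:GD}, and the linear condition $\Phi\in\cP_\textup{cert}$ is assumed.

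First, since $\Lambda_W$ is a PEP certificate for (G,D) convergence of $\tilde H$ with rate $\tau$, the slack $\textup{Slack}^{M_G,M_D}_{\tau,\Lambda_W,\tilde H}$ is everywhere nonnegative. Hence $\Lambda_W$ is in particular an unsigned certificate with rate $\tau$; here we use $q>0$, which is what \cref{lem:certificateLift} needs when $M_0=M_G$. Equivalently, $\cL^{\textup{G,D}}_{\Lambda_W,\tilde H}\succeq 0$.

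Second, because $\Lambda$ is invertible, \cref{thm:GD} applies directly. It gives $\cL^{\textup{G,D}}_{\Lambda_W,\tilde H}=\cR\,\cL^{\textup{G,D}}_{\Phi_W,\tilde H^A}\cR^\intercal$, and the kernel condition of \cref{lem:restricted_congruence} then shows that $\Phi_W$ is an unsigned certificate for (G,D) convergence of $\tilde H^A$ with rate $\tau$.

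Third, we supply the sign conditions. By \cref{lem:dd_gg_gd_certificate_form}, in its (G,D) analogue with the embedding $W$, $\Phi\in\cP_\textup{cert}$ holds if and only if $\Phi_W\in\cP_\textup{cert}$. Concretely, $\Phi_W=W\Phi W^\intercal$ just reverses the rows and columns of $\Phi$ and pads with a zero last row and column. Simultaneous reversal preserves the off-diagonal entries, and it permutes the row sums and column sums among themselves, so these stay nonpositive; the padded zeros contribute zero sums and zero off-diagonal entries. Combining this with the unsigned property, \cref{lem:certificateLift} shows that $\Phi_W$ is a PEP certificate for (G,D) convergence of $\tilde H^A$ with rate $\tau$.

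In the operator setting, symmetry of $\Lambda$ passes to $\Phi$ by \cref{thm:GD}, so $\Phi_W$ is symmetric as required. The argument has no real obstacle: all the substantive work is the congruence in \cref{thm:GD} and its kernel check. The only point needing care is confirming that the $W$-embedding preserves $\cP_\textup{cert}$ membership, which reduces to the permutation-plus-padding observation above.
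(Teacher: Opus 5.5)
Your proposal is correct and matches the paper's argument. The paper states this corollary as an immediate consequence of \cref{thm:GD}: that theorem transports the unsigned (slack) condition from $\Lambda_W$ to $\Phi_W$, and the assumed condition $\Phi\in\cP_\textup{cert}$ supplies the sign constraints once one notes, as you do explicitly, that the $W$-embedding (reversal plus zero padding) preserves membership in $\cP_\textup{cert}$ and that symmetry carries over in the operator setting.
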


\subsection{Tridiagonal certificates}
\label{sec:tridiagonal}
We will now give one special case in which the transformation sending $\Lambda$ to $(P\Lambda P)^{-1}$ not only sends unsigned certificates to unsigned certificates, but furthermore transforms PEP certificates into PEP certificates. Specifically, we will consider the situation in which $\Lambda$ has a tridiagonal sparsity pattern. Recall that $\Lambda$ is said to be \emph{tridiagonal} if it satisfies $\Lambda_{ij} = 0$ whenever $|i - j| > 1$.

Tridiagonal certificates often arise when the underlying proof of the convergence rate can be viewed in terms of the monotonicity of a Lyapunov function which evolves from iteration to iteration \cite{Kim2024-Hduality}.
Tridiagonal certificates are also of importance because many optimal algorithms are known to have tridiagonal certificates, including OGM, ITEM, Picard iteration, and the optimal Halpern iteration \cite{kim2017convergence, Drori2021OnTO, Lieder2020OnTC}.

The following proposition is the main result of this subsection and states that if an algorithm has a tridiagonal PEP certificate in one of the performance setups in \eqref{eq:classical_dualities}, then its H-dual automatically has the same guarantee on the dual performance setup.

\begin{proposition}
    \label{prop:tridiag_dual}
    Suppose that $\Lambda$ is an invertible reduced PEP certificate for
    $\tilde H$ with rate $\tau>0$ and one of the performance setups (D,D), (D,G), (G,D) or (G,G). If $\Lambda$ is further tridiagonal, then $\Phi = (P\Lambda P)^{-1}$ is a reduced PEP certificate for $\tilde H^A$ with rate $\tau$ for the performance setup (G,G), (D,G), (G,D) or (D,D) respectively.
\end{proposition}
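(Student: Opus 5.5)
The plan is to separate the statement into an algebraic part that is already proved and a sign-preservation part that is new. In each of the four cases, \cref{thm:DG}, \cref{thm:DD} (read in both directions, since the map $\Lambda\mapsto(\bar P\Lambda\bar P)^{-1}$ is an involution) and \cref{thm:GD} show that $\Phi=(P\Lambda P)^{-1}$, or $(\bar P\Lambda\bar P)^{-1}$ in the reduced setting, is an unsigned reduced certificate for $\tilde H^A$ with the same rate $\tau$. The symmetry needed in the operator setting is also preserved by these theorems. By \cref{cor:DG_convergence}, \cref{cor:DD_convergence}, \cref{cor:GD_convergence} and \cref{lem:dd_gg_gd_certificate_form}, it therefore suffices to prove a purely matrix-theoretic claim. \emph{If $\Lambda$ (of size $N\times N$ or $(N+1)\times(N+1)$) is tridiagonal, invertible and lies in $\cP_\textup{cert}$, then $(P\Lambda P)^{-1}\in\cP_\textup{cert}$.} Here $P$ is the appropriate reversal-cumulative-sum matrix, and $\bar P$ has the same structure, so one argument covers all four cases.

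Step 1 is a normal form. Since $P=SR$ is symmetric, we have $P=RS^\intercal$, and so
\begin{align*}
\Phi=(SR\Lambda RS^\intercal)^{-1}=S^{-\intercal}M^{-1}S^{-1},\qquad M\coloneqq R\Lambda R.
\end{align*}
Conjugation by $R$ preserves tridiagonality and membership in $\cP_\textup{cert}$, so $M$ is again a tridiagonal matrix in $\cP_\textup{cert}$. Then $-M$ is a Z-matrix that is weakly diagonally dominant by both rows and columns, and it is invertible, hence a nonsingular M-matrix. Consequently $G\coloneqq(-M)^{-1}\geq 0$ entrywise. Because $-M$ is tridiagonal and nonsingular, the standard Green's-function (semiseparable) structure applies: there are nonnegative sequences $u,v$ and $u',v'$ with $G_{ij}=u_iv_j$ for $i\le j$ and $G_{ij}=u'_jv'_i$ for $i\ge j$. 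In the symmetric case these coincide. The sequences are built from the two solutions of the three-term recurrence.

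Step 2 checks the sign conditions through discrete differences. Since $S^{-1}$ is the forward difference operator, $-\Phi=S^{-\intercal}GS^{-1}$ is a mixed second difference of $G$:
\begin{align*}
-\Phi_{ij}=G_{ij}-G_{i+1,j}-G_{i,j+1}+G_{i+1,j+1},
\end{align*}
with the boundary convention fixed by where $S^{-1}$ truncates. For $i<j$ (the strictly off-diagonal region, away from the diagonal corner) this factors as $(u_i-u_{i+1})(v_j-v_{j+1})$, up to the index convention. So $\Phi_{ij}\ge 0$ reduces to $u$ and $v$ being monotone in opposite directions. The row and column sums telescope: $\Phi\bbm1=S^{-\intercal}M^{-1}S^{-1}\bbm1$, and $S^{-1}\bbm1=e_0$, so each sum is a first difference of a single column (or row) of $G$. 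Nonpositivity of these sums again reduces to monotonicity of $u$, $v$ and the boundary values.

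Step 3 proves that monotonicity from the row- and column-sum conditions on $M$. Writing $-M G=I$ column by column, for $k\neq j$ we get $-M_{k,k-1}G_{k-1,j}+|M_{kk}|G_{kj}-M_{k,k+1}G_{k+1,j}=0$. Combined with diagonal dominance $|M_{kk}|\ge M_{k,k-1}+M_{k,k+1}$, this gives a discrete maximum principle. Each column $G_{\cdot j}$ is nondecreasing up to $j$ and nonincreasing after $j$, which is exactly the required sign pattern for the differences in Step 2. Column dominance gives the analogous statement for rows.

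I expect the main obstacle to be the boundary and diagonal-adjacent entries, together with the reduced embeddings. The diagonal-corner entries $\Phi_{i,i+1}$ involve both branches of $G$ and do not factor cleanly. In the (D,D)/(G,G) and (G,D) cases, moreover, $\bar P$ acts on $N\times N$ blocks, while the constraint $\Phi\in\cP_\textup{cert}$ must be checked on the reduced matrix; \cref{lem:dd_gg_gd_certificate_form} transfers this to the embedding. For the corner entries I would first try to use the identity $-MG=I$ at the row $k=j$ to express $\Phi_{i,i+1}$ as a nonnegative combination of the off-diagonal entries of $M$ and $G$. If that fails, I would instead verify the sign conditions of $\Phi$ directly from the explicit formula for the inverse of a tridiagonal matrix via continuants.
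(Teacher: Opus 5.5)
Your reduction to the matrix statement matches the paper: an invertible tridiagonal $\Lambda\in\cP_\textup{cert}$ has $(P\Lambda P)^{-1}\in\cP_\textup{cert}$. So does your normal form $\Phi=S^{-\intercal}M^{-1}S^{-1}$ with $M=R\Lambda R$. After that the routes differ.

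The paper works column by column. The row sums are $S^{-\intercal}M^{-1}e_0$, handled by the monotonicity of $M^{-1}e_0$ (third item of \cref{lem:mmatrix}). For column $i$ it sets $v=M^{-1}(e_i-e_{i+1})$, cuts $M$ between indices $i$ and $i+1$, and applies the same monotonicity fact to the two diagonal blocks. It then fixes $v_i\le 0\le v_{i+1}$ with a $2\times 2$ Schur complement. This uses only three standard M-matrix facts and never needs irreducibility.

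Your Green's-function route instead gives an entrywise closed form $-\Phi_{ij}=(u_i-u_{i+1})(v_j-v_{j+1})$ for $i<j$, and reads the sign off a discrete maximum principle. That is more structural. The obstacle you anticipate is not real. For $j=i+1$ the entries $G_{i,i+1},G_{i+1,i+1},G_{i,i+2},G_{i+1,i+2}$ all lie on or above the diagonal, so $\Phi_{i,i+1}$ factors like every other entry with $i<j$. Only the diagonal entries $\Phi_{ii}$ mix the two branches, and they carry no sign constraint. The row and column sums need only the monotonicity of the $0$th column and $0$th row of $G$ from Step 3.

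\textbf{The gap is in Step 1.} The representation $G_{ij}=u_iv_j$ for $i\le j$ and $G_{ij}=u'_jv'_i$ for $i\ge j$, built from solutions of the three-term recurrence, requires $M$ to be irreducible (nonzero sub- and superdiagonal). So does the positivity of the generators, which you need to pass from monotone columns and rows of $G$ to monotone $u,v$. Reducible certificates are the main use case of this proposition: ITEM's $\Lambda$ is lower bidiagonal and Picard's is diagonal.

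Already for $\Lambda=\begin{bmatrix}-1&0\\1&-2\end{bmatrix}\in\cP_\textup{cert}$ you get $G=(-R\Lambda R)^{-1}=\begin{bmatrix}1/2&1/2\\0&1\end{bmatrix}$. No $u',v'$ satisfy $u'_0v'_0=1/2$, $u'_0v'_1=0$, $u'_1v'_1=1$.

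The repair is short. Replace $\Lambda$ by $\Lambda_\epsilon=\Lambda-\epsilon\Delta$, where $\Delta$ is the Laplacian of the path graph on the index set. Then $\Lambda_\epsilon$ is tridiagonal (and symmetric if $\Lambda$ is) and has the same row and column sums as $\Lambda$. It also has strictly positive sub- and superdiagonal, and is invertible for small $\epsilon>0$. Hence $G_\epsilon>0$ entrywise, your generators exist and are positive, and the maximum principle of Step 3 goes through without dividing by a vanishing off-diagonal entry. Finally, $(P\Lambda_\epsilon P)^{-1}\to(P\Lambda P)^{-1}$ and closedness of $\cP_\textup{cert}$ finish the proof.
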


Because we have already established that the transformation sending $\Lambda$ to $(P\Lambda P)^{-1}$ sends unsigned reduced certificates to unsigned reduced certificates,
it suffices to check that $(P\Lambda P)^{-1}$ sends invertible tridiagonal elements of $\cP_\textup{cert}$ to $\cP_\textup{cert}$.

Before proving \cref{prop:tridiag_dual}, we will note the following linear algebraic facts, which are proved in \cite[Chapter 6]{berman1994nonnegative}, \cite{carlson1979schur}, and \cite{nabben1999decay}.
Recall that an $M$-matrix is a matrix whose off-diagonal entries are all nonpositive and whose eigenvalues have nonnegative real part.

\begin{lemma}
    \label{lem:mmatrix}
    Suppose that $\Lambda \in \cP_\textup{cert}$ is invertible. Then
    \begin{itemize}
        \item $-\Lambda$ is an $M$-matrix and $\Lambda^{-1}$ has only nonpositive entries
        \item Any Schur complement of $\Lambda$ is also in $\cP_\textup{cert}$
        \item 
        Suppose further that $\Lambda$ is tridiagonal and set $v = \Lambda^{-1} e_0$. Then, $v_0 \le v_1 \le \dots \le v_N \le 0$.
    \end{itemize}
\end{lemma}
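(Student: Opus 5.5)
The plan is to treat the three items in order, deducing everything from two facts: $-\Lambda$ is a weakly diagonally dominant $Z$-matrix, and nonsingular $M$-matrices have entrywise nonnegative inverses.

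\emph{First item.} Since $\Lambda\in\cP_\textup{cert}$, the matrix $-\Lambda$ has nonpositive off-diagonal entries. Its row sums are nonnegative, so $(-\Lambda)_{ii}\ge\sum_{j\ne i}\abs{\Lambda_{ij}}$. By Gershgorin, every eigenvalue of $-\Lambda$ lies in a disc centered at $(-\Lambda)_{ii}$ whose radius is at most that center. Hence all eigenvalues have nonnegative real part, and $-\Lambda$ is an $M$-matrix.

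For the sign of the inverse, write $-\Lambda=sI-B$ with $s\ge\max_i(-\Lambda)_{ii}$ and $B\ge 0$. By Perron--Frobenius, $\rho(B)$ is an eigenvalue of $B$, so $s-\rho(B)$ is a real eigenvalue of $-\Lambda$. It is therefore $\ge 0$, and it is $\ne 0$ by invertibility. So $s>\rho(B)$, and the Neumann series $(-\Lambda)^{-1}=s^{-1}\sum_k (B/s)^k$ is entrywise nonnegative, i.e.\ $\Lambda^{-1}\le 0$.

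I will also record that every principal submatrix of $\Lambda$ is in $\cP_\textup{cert}$: deleting indices only drops nonnegative off-diagonal terms from the row and column sums. Every principal submatrix is also invertible, since it is a principal submatrix of a nonsingular $M$-matrix (equivalently, $B_{11}\le B$ entrywise gives $\rho(B_{11})\le\rho(B)<s$).

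\emph{Second item.} Partition $\Lambda=\begin{bmatrix}\Lambda_{11}&\Lambda_{12}\\ \Lambda_{21}&\Lambda_{22}\end{bmatrix}$ and set $\Sigma=\Lambda_{22}-\Lambda_{21}\Lambda_{11}^{-1}\Lambda_{12}$.

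The off-diagonal entries of $\Sigma$ are nonnegative. Indeed, $\Lambda_{21},\Lambda_{12}\ge0$ and $\Lambda_{11}^{-1}\le 0$ by the first item applied to $\Lambda_{11}$, so the subtracted term is $\le 0$.

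For the row sums, let $u=\Lambda_{12}\bbm1\ge 0$ and write $\Lambda_{11}\bbm1+u=-r$ with $r\ge0$. Then
\[
\Lambda_{11}^{-1}u=-\bbm1-\Lambda_{11}^{-1}r\ge-\bbm1 .
\]
Multiplying by $-\Lambda_{21}\le 0$ gives $-\Lambda_{21}\Lambda_{11}^{-1}u\le\Lambda_{21}\bbm1$. Hence
\[
\Sigma\bbm1\le\Lambda_{22}\bbm1+\Lambda_{21}\bbm1\le 0 .
\]
Column sums follow by applying the same argument to $\Lambda^\intercal\in\cP_\textup{cert}$, whose Schur complement is $\Sigma^\intercal$.

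\emph{Third item.} Let $u=-v\ge 0$ (nonnegative by the first item), so $\Lambda u=-e_0$. For $i\ge1$ this reads
\[
\Lambda_{i,i-1}u_{i-1}+\Lambda_{ii}u_i+\Lambda_{i,i+1}u_{i+1}=0,
\qquad -\Lambda_{ii}\ge\Lambda_{i,i-1}+\Lambda_{i,i+1},
\]
with the convention $u_{N+1}=0$. I will prove $u_{i-1}\ge u_i$ by downward induction on $i=N,\dots,1$, assuming $u_i\ge u_{i+1}$ (trivially true at $i=N$). Then
\[
\Lambda_{i,i-1}u_{i-1}\ge(\Lambda_{i,i-1}+\Lambda_{i,i+1})u_i-\Lambda_{i,i+1}u_{i+1}\ge\Lambda_{i,i-1}u_i .
\]
If $\Lambda_{i,i-1}>0$, this gives $u_{i-1}\ge u_i$.

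The degenerate case $\Lambda_{i,i-1}=0$ is the only real obstacle. There, rows $i,\dots,N$ involve only $u_i,\dots,u_N$, with the trailing principal block as coefficient matrix and zero right-hand side. That block is invertible by the remark in the first item, so $u_i=\dots=u_N=0\le u_{i-1}$. This yields $v_0\le v_1\le\dots\le v_N\le 0$.
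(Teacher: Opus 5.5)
Your proof is correct. It differs from the paper mainly because the paper gives no proof: it attributes the three items to \cite[Chapter 6]{berman1994nonnegative}, \cite{carlson1979schur}, and \cite{nabben1999decay}. You instead derive everything from the weak diagonal dominance of $-\Lambda$. For the first item you use Gershgorin, the weak Perron--Frobenius theorem and a Neumann series, and you add the useful observation that principal submatrices stay in $\cP_\textup{cert}$ and remain invertible. For the second item you use the bound $\Lambda_{11}^{-1}\Lambda_{12}\bbm1\ge-\bbm1$ to control the row sums of the Schur complement, and get the column sums by transposition. For the third item you run a downward induction on the three-term recurrence. Your route buys self-containment and shows exactly which hypotheses are used: the third item and the row-sum half of the second need only nonnegative off-diagonal entries and nonpositive row sums. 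It also explicitly handles the reducible case $\Lambda_{i,i-1}=0$. That case actually occurs: in the proof of \cref{prop:tridiag_dual} the third item is applied to $\Gamma=R\Lambda R$ and to blocks such as $\Gamma_R$. These are upper bidiagonal whenever $\Lambda$ is lower bidiagonal, as for the ITEM certificate, so every subdiagonal entry vanishes there. The citation route buys brevity and places the lemma within standard $M$-matrix theory (closure under Schur complements, decay of inverses of tridiagonal $M$-matrices). One cosmetic remark: your block partition tacitly puts the eliminated indices first. This is harmless, since membership in $\cP_\textup{cert}$ and invertibility are unchanged by simultaneous row and column permutations.
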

%
%

\begin{proof}[Proof of \cref{prop:tridiag_dual}]
    Let $\Lambda\in \cP_\textup{cert}$ be invertible and tridiagonal. We will show that $(P\Lambda P)^{-1}\in\cP_\textup{cert}$. Since $\cP_\textup{cert}$ is invariant under simultaneously reordering rows and columns, 
    we have that $\Gamma = R\Lambda R$ is also an invertible tridiagonal element of $\cP_\textup{cert}$. We write
    \[
        (P\Lambda P)^{-1} = (S\Gamma S^\intercal)^{-1} = S^{-\intercal} \Gamma^{-1}S^{-1}.
    \]
    Thus, our goal is to establish:
    \begin{align*}
        S^{-\intercal}\Gamma^{-1} S^{-1} \bo \le 0\\
        (S^{-\intercal}\Gamma^{-1} S^{-1})^{\intercal} \bo \le 0 \\
        \forall i \neq j,\;e_j^{\intercal}(S^{-\intercal}\Gamma^{-1} S^{-1})e_i \ge 0
    \end{align*}

    We will first show that $S^{-\intercal}\Gamma^{-1} S^{-1} \bo \le 0$. First, note that 
    \[
        S^{-\intercal}\Gamma^{-1} S^{-1} \bo = S^{-\intercal}\Gamma^{-1} e_0.
    \]

    Letting $v = \Gamma^{-1}e_0$, we see that
    \[
        S^{-\intercal}\Gamma^{-1} S^{-1} \bo = S^{-\intercal} v = \begin{pmatrix}v_0 - v_1 \\ v_1 - v_2 \\ \dots \\ v_{N-1} - v_N \\v_N\end{pmatrix} \le 0.
    \]
    The final inequality follows from the third item of \cref{lem:mmatrix}.
    The fact that $(S^{-\intercal}\Gamma^{-1} S^{-1})^{\intercal} \bo \le 0$ follows similarly.

    Thus, it remains to show that the off-diagonal entries of $S^{-\intercal}\Gamma^{-1} S^{-1}$ are nonnegative. For this, the essential idea is to again apply the third item of \cref{lem:mmatrix}, but to a slightly modified matrix that depends on the column index $i$.

    We will first show that $(S^{-\intercal}\Gamma^{-1} S^{-1})_{ji} \ge 0$ in the case where $i = N$ and $j < N$. Consider
    \[
        S^{-\intercal}\Gamma^{-1} S^{-1} e_N = S^{-\intercal}\Gamma^{-1} e_N.
    \]
    Let $v=\Gamma^{-1} e_N = R\Lambda^{-1}e_0$.
    Then, by the third item of \cref{lem:mmatrix}, $v_N \le v_{N-1} \le \dots \le v_0 \le 0$.
    Thus, $S^{-\intercal}\Gamma^{-1} S^{-1} e_N = \begin{pmatrix}v_0 - v_{1} & \dots & v_{N-1}-v_N & v_N \end{pmatrix}$, and so for all $j < N$, $e_j^{\intercal} \Gamma^{-1} e_N \ge 0$.

    Now, fix some $i \in \cI_N \setminus N$ and consider
    \[
        S^{-\intercal}\Gamma^{-1} S^{-1} e_i = S^{-\intercal}\Gamma^{-1} (e_i - e_{i+1}).
    \]
    Let $v = \Gamma^{-1} (e_i - e_{i+1})$ and consider the following block form of the equation defining $v$.
    \[
        \begin{pmatrix}
            \Gamma_L & \Gamma_{i,i+1} e_i e_0^{\intercal}\\
            \Gamma_{i+1,i} e_0 e_i^{\intercal} & \Gamma_R
        \end{pmatrix}
        \begin{pmatrix} v_L \\ v_R\end{pmatrix} = 
        \begin{pmatrix} e_i \\ -e_0\end{pmatrix}.
    \]
    This gives two block equations
    \[
        \Gamma_L v_L + \Gamma_{i,i+1}v_{i+1}e_i = e_i,
    \]
    \[
        \Gamma_R v_R + \Gamma_{i+1,i}v_{i}e_0 = -e_0.
    \]
    We may massage these equations slightly to become
    \[
        R\Gamma_L R (Rv_L) = c_L e_0,
    \]
    \[
        \Gamma_R v_R = c_R e_0,
    \]
    where $c_L = 1-\Gamma_{i\;i+1}v_{i+1}$ and $c_R = -1-\Gamma_{i+1\;i}v_{i}$.
    It is also clear that $(R\Gamma_LR),\Gamma_R\in\cP_\textup{cert}$.
    Thus, as long as $c_L \ge 0$, and  $c_R \le 0$, then the third item of \cref{lem:mmatrix} implies that $v_0 \ge v_1 \ge \dots \ge v_i$, and $v_{i+1} \ge v_{i+2} \ge \dots \ge v_N$. This would then imply that for all $j \neq i$, $e_j^{\intercal}(S^{-\intercal}\Gamma^{-1} S^{-1})e_i = v_{j} - v_{j+1} \ge 0$.
    Therefore, it remains to show that $c_L \ge 0$ and $c_R \le 0$.

    For this, we first note that 
    \[
        \Gamma_L v_L = c_L e_i,
    \]
    so $v_i = c_L (\Gamma_L^{-1})_{ii}$, and since $\Gamma_L^{-1}$ has only nonpositive entries, this implies that $c_L$ is nonnegative if $v_i$ is nonpositive. Similarly, we see that $c_R$ is nonpositive if $v_{i+1}$ is nonnegative. Thus, it suffices to show that $v_{i+1} \ge 0 \ge v_i$.

    For this, we will let $S = \{i,i+1\}$ and consider the block decomposition
    \[
        \begin{pmatrix}
            \Gamma_{SS} & \Gamma_{S S^c}\\
            \Gamma_{S^cS} & \Gamma_{S^c S^c}
            \end{pmatrix} \begin{pmatrix}v_S \\ v_{S^c} \end{pmatrix} = \begin{pmatrix}\begin{pmatrix}1\\-1\end{pmatrix} \\ 0\end{pmatrix}.
    \]
    From this, we can arrive at the equation that 
    \[
        v_S = C^{-1} \begin{pmatrix}1 \\ -1\end{pmatrix},
    \]
    where $C$ is the Schur complement of $\Gamma$ with respect to $S^c$, i.e. 
    \[
        C = \Gamma_{SS} - \Gamma_{SS^c}\Gamma_{S^cS^c}^{-1}\Gamma_{S^cS}.
    \]

    Parts 1 and 2 of \cref{lem:mmatrix} imply that $C$ has nonnegative off-diagonal entries and nonpositive column sums.
    If we write
    \[
        C = \begin{pmatrix} a & b \\ c & d\end{pmatrix},
    \]
    then
    \[
        C^{-1} = \frac{1}{\det(C)}\begin{pmatrix} d & -b \\ -c & a\end{pmatrix}.
    \]
    Hence, 
    \[
        C^{-1}\begin{pmatrix}1\\-1\end{pmatrix} = \frac{1}{\det(C)}\begin{pmatrix} d+b \\ -c - a\end{pmatrix}.
    \]
    Since $\det(C) > 0$ and also the column sums of $C$ are nonpositive, it follows that $v_i \le 0$ and $v_{i+1} \ge 0$.
\end{proof}

\subsubsection{ITEM and ITEM-GG}
\label{subsubsec:ITEM_GG}
The ITEM algorithm is defined by the following recurrence: let $z_0 = y_0$ and then set
\begin{align*}
    y_k &= (1-\beta_k)z_k + \beta_k x_k\\
    z_{k+1} &= (1-\mu \delta_k)z_k + \mu \delta_k y_k - \frac{\delta_k}{L}s_k,
\end{align*}
where $x_k$ is by definition $y_k - \nabla f(y_k)$. Here, $\beta_k$ and $\delta_k$ are defined in terms of the sequence $A_0=0,A_1,\dots$ using the mutual recurrence
\[
    A_{k+1} = (1+\mu)A_k+2(1+\sqrt{(1+A_k)(1+\mu A_k)}),
\]
\[
    \beta_{k} = \frac{A_k}{(1+\mu)A_{k+1}},
\]
\[
    \delta_{k} = \sqrt{\frac{A_{k+1}}{1+\mu A_{k+1}}}.
\]
Note that $A_k>0$ for all $k\geq 1$.

\cite{taylor2022optimal} proves the following guarantee for ITEM
\[
    \|z_N\|^2 \le \frac{1}{1+\mu A_N}\|y_0\|^2
\]
using an explicit PEP certificate.
Rescaling their certificate using our convention gives the following reduced PEP certificate of (D,D) convergence for ITEM:
\[
    \Lambda_{ij} = \frac{1-\mu}{2\sqrt{1+\mu A_N}}\begin{cases}
                        -A_{i+1} \text{ if }i = j\\
                        A_i \text{ if } i = j+1\\
                        0 \text{ otherwise}
    \end{cases}.
\]
Note that this certificate is lower bidiagonal (hence tridiagonal) with negative entries on the diagonal and thus is invertible.
\cref{prop:tridiag_dual} implies that $\Phi = (P\Lambda P)^{-1}$ is a reduced (G,G) PEP certificate for the H-dual of ITEM. In particular, the H-dual of ITEM produces an output which satisfies
\[
    \|\nabla h(y_N)\|^2 \le \frac{1}{1+\mu A_N}\|\nabla h(y_0)\|^2.
\]

\section{Applications}
\label{sec:examples}

This section collects applications of \cref{thm:DD,thm:DG,thm:GD}.
We apply our transformation $\Lambda \leftrightarrow \Phi$ in each of the (D,D)--(G,G), (D,G)--(D,G), (G,D)--(G,D) settings to FSFOMs that are either minimax optimal or conjectured to be minimax optimal.
In each case, we record the $\Phi$ produced by starting from a PEP certificate $\Lambda$ and applying our map $\Lambda\mapsto\Phi$ from \cref{thm:DD,thm:DG,thm:GD}.
Recall that \cref{thm:DD,thm:DG,thm:GD} only guarantee that $\Phi$ will be an unsigned certificate. Nonetheless, in all examples below, the returned $\Phi$ satisfies the requisite sign conditions and is a valid PEP certificate.

In the settings where a minimax optimal FSFOM is not known, we use a numerically optimized choice of $\tilde H$ and $\Lambda$. This $\tilde H$ and $\Lambda$ pair are found by repeatedly solving a local first-order approximation to $\min_{\tilde H}\tau_\textup{PEP}^\textup{*,*}(\tilde H)$ in the $\tilde H, \Lambda, \tau$ variables simultaneously. Implementation details can be found here:

\begin{center}
\url{https://github.com/alexlihengwang/H-Duality}
\end{center}

\begin{table}[h]
\centering
\begin{tabular}{llll}
\toprule
Setting  & Duality & Primal Method & Example\\
\midrule
Function 
    & (D,D)--(G,G)& ITEM & \cref{ex:item}\\
Function 
    & (D,G)--(D,G)& optimized method
    & \cref{ex:function_dg}\\
Function 
    & (G,D)--(G,D)& optimized method
    & \cref{ex:function_gd}\\
Operator 
    & (D,D)--(G,G)& Picard
    & \cref{ex:op_picard}\\
Operator 
    & (D,G)--(D,G)& OHM
    & \cref{ex:op_OHM}\\
Operator 
    & (G,D)--(G,D)& optimized method
    & \cref{ex:op_gd}\\
\bottomrule
\end{tabular}
\end{table}

\begin{example}
    \label{ex:item}
The Information-Theoretic Exact Method (ITEM) due to~\citet{drori2017exact,taylor2023optimal} is a minimax optimal algorithm for the (D,D) performance criteria in the function setting. For $N = 5$ and $q = 0.1$, the $\tilde H$ matrix for ITEM has the following form
\begin{align*}
    \tilde H = \begin{bmatrix}
 0.0  &   0.0   &  0.0   &  0.0   &  0.0  &   0.0\\
 1.553  & 0.0   &  0.0   &  0.0    & 0.0  &   0.0\\
 0.1188 & 1.8534 & 0.0   &  0.0    & 0.0  &   0.0\\
 0.0358 & 0.2569 & 1.9723 & 0.0    & 0.0  &   0.0\\
 0.0122 & 0.0873 & 0.3304 & 2.0254 & 0.0  &   0.0\\
 0.0092 & 0.0661 & 0.2501 & 0.7763 & 3.2214 & 0.0
    \end{bmatrix}.
\end{align*}
\citet{taylor2023optimal} provide an analytic PEP certificate $\Lambda_V$ for ITEM in this setting. For $N = 5$ and $q = 0.1$, this is numerically
\begin{align*}
    \Lambda &= \begin{bmatrix}
    -0.5235 &  0.0   &    0.0     &  0.0   &     0.0\\
  0.5235 &  -1.7118  &  0.0   &    0.0    &    0.0\\
  0.0    &   1.7118 &  -4.1981 &   0.0    &    0.0 \\
  0.0   &    0.0   &    4.1981 &  -9.3178  &   0.0 \\
  0.0   &    0.0   &    0.0   &    9.3178 &  -19.8229
    \end{bmatrix}.
\end{align*}
Applying \cref{thm:DD} produces
\begin{align*}
    \Phi &= (\bar P\Lambda \bar P)^{-1} = \begin{bmatrix}
-0.1073 & 0.0  &    0.0    &  0.0   &   0.0569\\
  0.1073 & -0.2382  & 0.0    &  0.0   &   0.1309\\
  0.0    &  0.2382&  -0.5842 &  0.0   &   0.346\\
  0.0   &   0.0    &  0.5842 & -1.9102  & 1.3261\\
  0.0   &   0.0   &   0.0    &  1.9102 & -1.9102
    \end{bmatrix}.
\end{align*}
The sign conditions on $\Phi$ and its row and column sums are satisfied so $\Phi_U$ is a PEP certificate of (G,G) convergence of $\tilde H^A$ with the same rate. We refer to the algorithm $\tilde H^A$ as ITEM-GG.
The convergence rate of ITEM-GG can be proved analytically in this way; see \cref{subsubsec:ITEM_GG}.
\end{example}

\begin{example}
    \label{ex:function_dg}
Let $N = 5$ and $q = 0.1$. 
The following optimized FSFOM $\tilde H$ and its PEP certificate $\Lambda$ for the (D,G) performance criteria in the function setting were found by repeatedly solving a local first-order approximation to $\min_{\tilde H}\tau_\textup{PEP}^\textup{D,G}(\tilde H)$:
\begin{align*}
    \tilde H &= \begin{bmatrix}
0.0  &   0.0  &   0.0   &  0.0    & 0.0 &  0.0\\
 1.55  &  0.0  &   0.0  &   0.0   &  0.0 &  0.0\\
 0.1137 & 1.8318 & 0.0  &   0.0   &  0.0  & 0.0\\
 0.0304 & 0.2227 & 1.8998 & 0.0   &  0.0  & 0.0\\
 0.0075 & 0.0551 & 0.2227 & 1.8318 & 0.0  & 0.0\\
 0.001  & 0.0075 & 0.0304 & 0.1137 & 1.55 & 0.0
    \end{bmatrix}\\
    \Lambda &= \begin{bmatrix}
-0.1158 &  0.0    &  0.0 &  0.0    &  0.0   &   0.0161\\
  0.1158 & -0.3852  & 0.0  & 0.0   &   0.0   &   0.1126\\
  0.0    &  0.3852 & -1.0 &  0.0    &  0.0   &   0.418\\
  0.0    &  0.0    &  1.0 & -2.5963&   0.0   &   1.3837\\
  0.0    &  0.0   &   0.0 &  2.5963 & -8.6383  & 5.847\\
  0.0    &  0.0  &    0.0  & 0.0    &  8.6383  &-8.7768
    \end{bmatrix}.
\end{align*}
Note that $\tilde H$ is self-antitranspose.
Additionally, $\Lambda = (P\Lambda P)^{-1}$ is a fixed point of the map $\Lambda\mapsto\Phi$ of \cref{thm:DG}. These observations suggest a promising path to analytically describing ``ITEM-DG''. Concurrent work by \citet{kim2026domain} gives an analytic description of this algorithm in the non-strongly convex setting and links it to the ``lemniscate constant''.
\end{example}

\begin{example}
    \label{ex:function_gd}
Let $N = 5$ and $q = 0.1$. 
The following optimized FSFOM for the (G,D) performance criteria in the function setting was found by repeatedly solving a local first-order approximation to $\min_{\tilde H}\tau_\textup{PEP}^\textup{G,D}(\tilde H)$.
\begin{align*}
    \tilde H = \begin{bmatrix}
 0.0   &  0.0  &   0.0  &   0.0   &  0.0   &  0.0\\
 3.3166 & 0.0   &  0.0  &   0.0   &  0.0   &  0.0\\
 0.8684 & 2.0733&  0.0  &   0.0   &  0.0   &  0.0\\
 0.3255 & 0.4024 & 2.0734 & 0.0   &  0.0   &  0.0\\
 0.122  & 0.1508 & 0.4024 & 2.0734 & 0.0   &  0.0\\
 0.0987 & 0.122  & 0.3255 & 0.8684 & 3.3166 & 0.0
    \end{bmatrix}.
\end{align*}
The PEP certificate found in this way is $\Lambda_{W}$ where
\begin{align*}
    \Lambda &= \begin{bmatrix}
    -4.2011 &  2.0497 &  0.0 &  0.0  &    0.0\\
  0.0    & -2.0497  & 1.0  & 0.0     & 0.0\\
  0.0    &  0.0    & -1.0  & 0.4879  & 0.0\\
  0.0    &  0.0    &  0.0 & -0.4879  & 0.238\\
  0.0    &  0.0    &  0.0 &  0.0    & -0.238\\
    \end{bmatrix}.
\end{align*}
As in the previous example, $\tilde H$ is self-antitranspose and $\Lambda = (\bar P\Lambda \bar P)^{-1}$ turns out to be a fixed point of the map $\Lambda\mapsto\Phi$ of \cref{thm:GD}. These observations suggest a promising path to analytically describing ``ITEM-GD''.
\end{example}

\begin{example}
    \label{ex:op_OHM}
The Optimized Halpern Method (OHM)~\cite{Lieder2020OnTC,Kim2021AccelPPM,park2022exact} is a minimax optimal algorithm for the (D,G) performance criteria in the operator setting. For $N = 5$ and $q = 0.1$, the $\tilde H$ matrix for OHM has the following form
\begin{align*}
    \tilde H = \begin{bmatrix}
0.0  &    0.0 &     0.0   &   0.0   &  0.0  &   0.0\\
  1.082  &  0.0 &     0.0  &    0.0  &   0.0  &   0.0\\
 -0.2397 &  1.4272&   0.0   &   0.0  &   0.0  &   0.0\\
 -0.1123 & -0.1903 &  1.5889 &  0.0  &   0.0  &   0.0\\
 -0.0619 & -0.1048 & -0.1347 &  1.678 &  0.0  &   0.0\\
 -0.0371 & -0.0629 & -0.0808 & -0.0933 & 1.7314 & 0.0
    \end{bmatrix}.
\end{align*}
\citet{park2022exact} provide an analytic PEP certificate for OHM in this setting. For $N = 5$ and $q = 0.1$, this is numerically
\begin{align*}
    \Lambda &= \begin{bmatrix}
-0.2812 &  0.2812  & 0.0   &   0.0   &   0.0   &   0.0\\
  0.2812 & -1.1627 &  0.8815  & 0.0   &   0.0   &   0.0\\
  0.0    &  0.8815 & -2.763   & 1.8815 &  0.0   &   0.0\\
  0.0    &  0.0    &  1.8815 & -5.2973 &  3.4157 &  0.0\\
  0.0    &  0.0    &  0.0    &  3.4157 & -9.1061 &  5.6904\\
  0.0    &  0.0    &  0.0    &  0.0    &  5.6904 & -6.6905
    \end{bmatrix}.
\end{align*}
Applying \cref{thm:DG} produces
\begin{align*}
    \Phi &= (P\Lambda P)^{-1} = \begin{bmatrix}
    -0.1757 &  0.0   &   0.0   &   0.0    &  0.0  &    0.1757\\
  0.0   &  -0.2928&   0.0   &   0.0  &    0.0  &    0.2928\\
  0.0   &   0.0  &   -0.5315 & 0.0  &    0.0   &   0.5315\\
  0.0   &   0.0   &   0.0   &  -1.1344  & 0.0   &   1.1344\\
  0.0   &   0.0   &   0.0   &   0.0  &   -3.5558  & 3.5558\\
  0.1757 &  0.2928  & 0.5315  & 1.1344  & 3.5558 & -6.6901
    \end{bmatrix}.
\end{align*}
The sign conditions on $\Phi$ and its row and column sums are satisfied, so $\Phi$ is a PEP certificate of (D,G) convergence of $\tilde H^A\neq \tilde H$ with the same rate.
\end{example}

\begin{example}
    \label{ex:op_picard}
Picard iteration is the minimax optimal method for the (D,D) performance criteria in the operator setting. For $N = 5$ and $q = 0.1$, its $\tilde H$ matrix is given by
\begin{align*}
    \tilde H = \begin{bmatrix}
0.0  &   0.0   &  0.0   &  0.0   &  0.0 &    0.0\\
 1.8333 & 0.0  &   0.0  &   0.0  &   0.0  &   0.0\\
 0.0  &   1.8333 & 0.0   &  0.0  &   0.0   &  0.0\\
 0.0  &   0.0   &  1.8333 & 0.0  &   0.0   &  0.0\\
 0.0  &   0.0   &  0.0    & 1.8333 & 0.0   &  0.0\\
 0.0  &   0.0   &  0.0    & 0.0    & 1.8333 & 0.0
    \end{bmatrix}.
\end{align*}
A PEP certificate is given by $\Lambda_{V}$ where
\begin{align*}
    \Lambda = \begin{bmatrix}
    -0.8841 &  0.0   &   0.0  &    0.0  &  0.0\\
  0.0   &  -1.2731 &  0.0   &   0.0  &  0.0\\
  0.0   &   0.0  &   -1.8333  & 0.0  &  0.0\\
  0.0   &   0.0  &    0.0  &   -2.64 &  0.0\\
  0.0   &   0.0  &    0.0  &    0.0 &  -3.8016
    \end{bmatrix}.
\end{align*}
Note that this is a representation of the standard proof of the convergence rate of Picard iteration given by
\[
    \|y_0 - y_\star\| \ge \frac{1+\mu}{1-\mu} \|y_1 - y_\star\| \ge \dots \ge
    \left(\frac{1+\mu}{1-\mu}\right)^n \|y_n - y_\star\|,
\]
up to normalization.

Applying \cref{thm:DD} produces
\begin{align*}
    \Phi &= (\bar P\Lambda \bar P)^{-1} = \begin{bmatrix}
    -0.6418  & 0.3788   &0.0  &   0.0  &    0.0\\
  0.3788 & -0.9242 &  0.5455 &  0.0   &   0.0\\
  0.0   &   0.5455 & -1.3309  & 0.7855  & 0.0\\
 0.0   &   0.0    &  0.7855 & -1.9165 &  1.1311\\
  0.0   &   0.0   &   0.0    &  1.1311 & -1.1311
    \end{bmatrix}.
\end{align*}
The sign conditions on $\Phi$ and its row and column sums are satisfied, so $\Phi_{U}$ is a PEP certificate of (G,G) convergence of $\tilde H^A = \tilde H$ with the same rate.
This is a representation of the standard proof of the convergence rate of Picard iteration given by
\[
    \|y_0 - Ty_0\| \ge \frac{1+\mu}{1-\mu} \|y_1 - Ty_1\| \ge \dots \ge
    \left(\frac{1+\mu}{1-\mu}\right)^n \|y_n - T y_n\|,
\]
up to normalization.
\end{example}

\begin{example}
    \label{ex:op_gd}
Let $N = 5$ and $q = 0.1$. 
The following optimized FSFOM for the (G,D) performance criteria in the operator setting was found by repeatedly solving local first-order approximations to $\min_{\tilde H} \tau_\textup{PEP}^\textup{G,D}(\tilde H)$:
\begin{align*}
    \tilde H = \begin{bmatrix}
    0.0 & 0.0    & 0.0 &    0.0  &   0.0  &   0.0\\
 6.0 & 0.0  &   0.0  &   0.0    & 0.0   &  0.0\\
 0.0 & 1.8333 & 0.0  &   0.0 &    0.0  &   0.0\\
 0.0 & 0.0  &   1.8333 & 0.0  &   0.0  &   0.0\\
 0.0 & 0.0  &   0.0   &  1.8333 & 0.0  &   0.0\\
 0.0 & 0.0  &   0.0   &  0.0   &  1.8333 & 0.0
    \end{bmatrix}.
\end{align*}
The PEP certificate found in this way is $\Lambda_{W}$ where
\begin{align*}
    \Lambda &= \begin{bmatrix}
     -0.6336 &  0.0  &  0.0  &   0.0 &     0.0\\
  0.0  &   -0.44 &  0.0  &   0.0  &   0.0\\
  0.0  &    0.0  & -0.3056  & 0.0  &   0.0\\
 0.0  &   0.0 &   0.0 &   -0.2122 & 0.0\\
  0.0  &   0.0 &  0.0  &   0.0   &  -0.4823
    \end{bmatrix}.
\end{align*}
Applying \cref{thm:GD} produces
\begin{align*}
   \Phi &= (\bar P\Lambda \bar P)^{-1} = \begin{bmatrix}
    -6.7863 &  4.7127  & 0.0   &   0.0  &   0.0\\
  4.7127  &-7.9855  & 3.2727&   0.0   &   0.0\\
  0.0   &   3.2727 & -5.5455 &  2.2727 &  0.0\\
  0.0  &    0.0   &   2.2727 & -3.851  &  1.5783\\
 0.0  &    0.0   &   0.0   &   1.5783&  -1.5783
    \end{bmatrix}
\end{align*}
It is straightforward to verify that $\Phi$ has nonpositive row and column sums and nonnegative off-diagonal entries, so that $\Phi_{W}$ is a PEP certificate of (G,D) convergence of $\tilde H^A\neq \tilde H$ with the same rate.
\end{example}

\bibliographystyle{plainnat}
\bibliography{bib}

\appendix
\crefalias{section}{appendix}

\section{Deferred proofs}
\label{app:deferred}

\begin{proof}[Proof of \cref{lem:primal_dual_extremal}]
It suffices to show that
\begin{align}
    \label{eq:tau_extremal_primal_dual}
    \tau_\textup{unsigned}^{M_0,M_N}(\tilde H) &= \max_{\sigma\in\Omega}\abs{\kappa(\sigma, \tilde H)}\sqrt{\frac{d_\sigma^* M_N d_\sigma}{d_\sigma^* M_0 d_\sigma}}.
\end{align}

First, rewrite the SDP defining $\tau_\textup{unsigned}^{M_0,M_N}(\tilde H)$. Below, let
$t = \tau^2$ and $\Lambda_\tau = \tau\Lambda$. We will simplify the slack term using the relation $\bx = - SH\bg + v_H y_0$.
\begin{align*}
    \left(\tau_\textup{unsigned}^{M_0,M_N}(\tilde H)\right)^2&= 
    \inf_{\substack{\Lambda\in\R^{\cI_N\times \cI_N}\\\tau>0}}\set{\tau^2:\, 
    \frac{\tau}{2}Q_{M_0}(y_0, s_0) - \frac{1}{2\tau}Q_{M_N}(y_N,s_N) +
    \tr(\bg^\intercal\Lambda\bx)\geq 0\quad\forall (y_0,\bg)}\\
    &=
    \inf_{\substack{\Lambda_\tau\in\R^{\cI_N\times \cI_N}\\ t>0}}\set{t:\, 
    \frac{t}{2}Q_{M_0}(y_0, s_0) - \frac{1}{2}Q_{M_N}(y_N, s_N) + \tr(\bg^\intercal\Lambda_\tau \bx)\geq 0\quad\forall (y_0,\bg)}.
\end{align*}
The proof then splits into the function and operator settings.

In the function setting, $\Lambda\in\R^{\cI_N\times \cI_N}$ is a free variable. Thus, by strong duality,
\begin{align*}
    \left(\tau_\textup{unsigned}^{M_0,M_N}(\tilde H)\right)^2&= \sup_{y_0,\bg}\set{\frac{Q_{M_N}(y_N, s_N)}{Q_{M_0}(y_0, s_0)}:\, 
    \ip{g_i, x_j} = 0\qquad\forall i,j \in \cI_N
    },
\end{align*}
where $x_i, y_i, s_i, g_i$ are understood to be functions of $(y_0,\bg)$.

Fix an arbitrary feasible solution and let $V_1 = \spann\set{x_0,\dots, x_N}$ and $V_2 = \spann\set{g_0,\dots, g_N}$. The constraint
$\ip{g_i, x_j} = 0$ for all $i,j\in \cI_N$ shows that $V_1\perp V_2$. Now, recall
\begin{align*}
    s_i = qx_i + g_i, \qquad y_i = x_i + s_i = (1+q)x_i + g_i.
\end{align*}
Let $\Pi_1$ and $\Pi_2$ denote the projections onto $V_1$ and $V_2$ respectively.
Then,
\begin{align*}
    \Pi_1 s_i &= qx_i = \frac{q}{1+q}\Pi_1y_i = \mu \Pi_1 y_i\qquad\text{and}\qquad
    \Pi_2 s_i = g_i = \Pi_2 y_i.
\end{align*}
Note that on each subspace $V_\ell$, we have that $\Pi_\ell s_i = \sigma_\ell \Pi_\ell y_i$ for some $\sigma_\ell \in \Omega_\textup{fun}$.

The same conclusion holds in the operator setting. Since $\Lambda\in\S^{\cI_N}$ is a free variable, strong duality gives
\begin{align*}
    \left(\tau_\textup{unsigned}^{M_0,M_N}(\tilde H)\right)^2&= \sup_{y_0,\bg}\set{\frac{Q_{M_N}(y_N, s_N)}{Q_{M_0}(y_0, s_0)}:\, 
    \ip{g_i, x_j} + \ip{x_i , g_j} = 0\qquad\forall i,j\in \cI_N
    },
\end{align*}
where $x_i, y_i, s_i, g_i$ are understood to be functions of $(y_0,\bg)$. 
Let $z_i \coloneqq y_i - \frac{2}{1+\mu}s_i$ so that any interpolating operator satisfies $T(y_i) = z_i$.
Also define $x_\star = z_\star = y_\star = s_\star = g_\star = 0$.

Note that when $i=j$, the constraints require $\ip{x_i, g_i} = 0$. Thus,
\begin{align*}
    \ip{g_i - g_j, x_i - x_j} = -\ip{g_i, x_j} - \ip{g_j, x_i } = 0\qquad\forall i,j\in \cI_N\cup\set{\star}.
\end{align*}
Here, the identities involving $\star$ follow from the fact that $x_\star = g_\star = 0$.
Equivalently (see proof of \cref{lem:interpolation}), 
\begin{align*}
    \norm{z_i - z_j}^2 = \left(\frac{1-\mu}{1+\mu}\right)^2\norm{y_i - y_j}^2 ,\qquad \forall i,j\in \cI_N\cup\set{\star}.
\end{align*}
When $i\in \cI_N$ and $j= \star$, this requires $\norm{z_i}^2 = \left(\frac{1-\mu}{1+\mu}\right)^2\norm{y_i}^2$. In particular,
\begin{align*}
    \ip{z_i,z_j} = \left(\frac{1-\mu}{1+\mu}\right)^2\ip{y_i, y_j}\qquad\forall i,j\in \cI_N\cup\set{\star}.
\end{align*}
Since all inner products are preserved (up to a common scaling), there exists a unique linear isometry $U:\spann\set{y_i}\mapsto\spann\set{z_i}$ satisfying $U y_i = \frac{1+\mu}{1-\mu} z_i$. This isometry can be extended to $\R^d$, i.e., to an orthogonal matrix in $\R^{d\times d}$. Then,
\begin{align*}
    z_i = \frac{1-\mu}{1+\mu} U y_i.
\end{align*}
By the real canonical form for orthogonal operators, there is an orthogonal decomposition
\begin{align*}
    \R^d = \bigoplus_{\ell=1}^k V_\ell,
\end{align*}
where each $V_\ell$ is either one- or two-dimensional, and $U$ acts as multiplication by either $+1$ or $-1$ on every one-dimensional subspace and as a planar rotation on every two-dimensional subspace. By identifying each two-dimensional subspace $V_\ell$ with $\C$, there exists a unit complex number $\gamma_\ell \in\C$ so that $U$ acts as multiplication by $\gamma_\ell$ on that subspace. 
In other words, $U$ acts as multiplication by a unit $\gamma_\ell\in\C$ on every subspace $V_\ell$.

Let $\Pi_\ell$ denote projection onto $V_\ell$. Then,
\begin{align*}
    \Pi_\ell s_i = \frac{1+\mu}{2}\left(\Pi_\ell y_i - \Pi_\ell z_i\right)= \frac{1+\mu}{2}\left(\Pi_\ell y_i - \frac{1-\mu}{1+\mu}\Pi_\ell U y_i\right)= \left(\frac{1+\mu}{2} - \frac{1-\mu}{2} \gamma_\ell\right)\Pi_\ell y_i.
\end{align*}
Note that $\sigma_\ell\coloneqq \left(\frac{1+\mu}{2} - \frac{1-\mu}{2}\gamma_\ell\right)\in \Omega$.

The remainder of the proof is common to both settings.
Note that each quadratic form $Q_{M_i}(y_i, s_i)$ splits over orthogonal subspaces of $\R^d$, i.e., for $i= 0,N$,
\begin{align*}
    Q_{M_i}(y_i, s_i) &= \tr\left(\begin{bmatrix}
    y_i\\
    s_i
    \end{bmatrix}^\intercal M_i \begin{bmatrix}
    y_i\\
    s_i
    \end{bmatrix}\right)\\
    &= \sum_{\ell}\tr\left(\begin{bmatrix}
    \Pi_\ell y_i\\
    \Pi_\ell s_i
    \end{bmatrix}^\intercal M_i \begin{bmatrix}
    \Pi_\ell y_i\\
    \Pi_\ell s_i
    \end{bmatrix}\right)\\
    &=\sum_{\ell}\norm{\Pi_\ell y_i}^2 d_{\sigma_\ell}^* M_i d_{\sigma_\ell}.
\end{align*}
Finally, note that $\norm{\Pi_\ell y_N}^2 = \abs{\kappa(\sigma_\ell,\tilde H)}^2\norm{\Pi_\ell y_0}^2$ for all $\ell$.

We conclude that
\begin{align*}
    \frac{Q_{M_N}(y_N,s_N )}{Q_{M_0}(y_0,s_0 )} &\leq \max_{\sigma \in \Omega} \abs{\kappa(\sigma, \tilde H)}^2 \frac{d_\sigma^* M_N d_\sigma}{d_\sigma^* M_0 d_\sigma}.
\end{align*}

On the other hand, for any $\sigma\in \Omega$ and $y_0\neq 0$, the instance defined by $s_i = \sigma y_i$ is feasible. We conclude that \eqref{eq:tau_extremal_primal_dual} holds.
\end{proof}

\begin{proof}[Proof of \cref{lem:dd_gg_gd_certificate_form}]
We begin with the (D,D) statement.
Suppose $\Lambda$ is a PEP certificate and partition it as
\begin{align*}
    \Lambda &= \begin{bmatrix}
    \hat \Lambda & a\\
    b^\intercal & \lambda
    \end{bmatrix}
\end{align*}
where $\hat\Lambda\in\R^{N\times N}$ and $a,b\in\R^N$. By assumption $a, b \geq 0$ and 
\begin{align*}
    \hat\Lambda\bbm 1 + a  \leq 0,\quad \hat\Lambda^\intercal\bbm1 + b \leq 0,\quad 
    \bbm1^\intercal a + \lambda \leq 0,\quad
    \bbm1^\intercal b +\lambda\leq 0.
\end{align*}
In particular, $\lambda\leq 0$. If $\lambda = 0$, then $a = b = 0$ and we are done.
Otherwise, define
\begin{align*}
    \tilde\Lambda = \hat\Lambda - \frac{ab^\intercal}{\lambda} = \hat\Lambda + \frac{ab^\intercal}{\abs{\lambda}}.
\end{align*}
Note that $\hat\Lambda$ has nonnegative off-diagonal entries, $ab^\intercal$ is entrywise nonnegative and $\lambda <0$. Thus, $\tilde\Lambda$ has nonnegative off-diagonal entries.

The row and column sums of $\tilde\Lambda$ are 
\begin{align*}
    \tilde\Lambda\bbm1 &= \hat\Lambda\bbm1 + a \frac{\bbm1^\intercal b}{\abs{\lambda}} \leq \hat\Lambda\bbm1 + a  \leq 0\\
    \tilde\Lambda^\intercal\bbm1 &= \hat\Lambda^\intercal\bbm1 + b \frac{\bbm1^\intercal a}{\abs{\lambda}} \leq \hat\Lambda^\intercal\bbm1 + b  \leq 0.
\end{align*}

By assumption, $\Lambda$ is a PEP certificate, so
\begin{align*}
    \frac{\tau}{2}\norm{y_0}^2 - \frac{1}{2\tau}\norm{y_N}^2 + \tr(\bg^\intercal \Lambda \bx)\geq 0
\end{align*}
for all $(y_0,\bg)$. Let $\bar \bg$ and $\bar\bx$ denote the subvectors of $\bg$ and $\bx$ corresponding to entries $0,\dots,N-1$.
Then,
\begin{align*}
    0&\leq \frac{\tau}{2}\norm{y_0}^2 - \frac{1}{2\tau}\norm{y_N}^2 + \tr\left(\begin{bmatrix}
    \bar \bg\\
    g_N
    \end{bmatrix}^\intercal \begin{bmatrix}
    \hat\Lambda & a\\
    b^\intercal & \lambda
    \end{bmatrix} \begin{bmatrix}
    \bar \bx\\
    x_N
    \end{bmatrix}\right)\\
    &= \frac{\tau}{2}\norm{y_0}^2 - \frac{1}{2\tau}\norm{y_N}^2 + 
    \tr(\bar\bg^\intercal \hat\Lambda\bar\bx) + \ip{x_N, a^\intercal\bar\bg} + \ip{g_N, b^\intercal\bar \bx} + \lambda\ip{g_N,x_N}\\
    &=  \frac{\tau}{2}\norm{y_0}^2 - \frac{1}{2\tau}\norm{y_N}^2 + 
    \tr(\bar\bg^\intercal \hat\Lambda\bar\bx) + \frac{1}{1+q}\ip{y_N - g_N, a^\intercal\bar\bg} + \ip{g_N, b^\intercal\bar \bx} + \frac{\lambda}{1+q}\ip{g_N,y_N - g_N}.
\end{align*}
Here, the second line expands and the third line applies the identity $(1+q)x_N = y_N - g_N$.
Now, since $y_0, y_N, \bar \bg, \bar \bx$ are independent of $g_N$, we may set $g_N = -\frac{1}{\lambda}a^\intercal\bar\bg$. This gives
\begin{align*}
    \frac{\tau}{2}\norm{y_0}^2 - \frac{1}{2\tau}\norm{y_N}^2 + 
    \tr\left(\bar\bg^\intercal \left(\hat\Lambda - \frac{ab^\intercal}{\lambda}\right)\bar\bx\right)\geq0.
\end{align*}
Thus, $\textup{Slack}^\textup{D,D}_{\tau,\tilde\Lambda_{V}, \tilde H}$ is nonnegative for every $(y_0,\bar \bg)$. Finally, by inspection $\textup{Slack}^\textup{D,D}_{\tau,\tilde\Lambda_{V},\tilde H}$ is independent of $g_N$ so it is also nonnegative for every $(y_0,\bg)$.

An identical proof holds for the (G,D) setting upon replacing $\norm{y_0}^2$ with $\norm{s_0}^2$.

Now, consider the (G,G) setting.
Let $\Lambda$ be an arbitrary PEP certificate and set
\begin{align*}
    a = -\Lambda\bbm1,\quad b = -\Lambda^\intercal\bbm1,
\end{align*}
which are both nonnegative by assumption. Let $\rho = \bbm1^\intercal a = \bbm1^\intercal b = -\bbm1^\intercal\Lambda\bbm1$.
Thus $\rho \geq 0$.
If $\rho = 0$, then $a=b=0$ and we are done. Otherwise, define
\begin{align*}
    \Gamma = \Lambda + \frac{ab^\intercal}{\rho}.
\end{align*}

We check that $\Gamma\in \cP_\textup{cert}$. First, since $\Lambda$ has nonnegative off-diagonal entries, $a,b\geq 0$, and $\rho>0$, it holds that $\Gamma$ has nonnegative off-diagonal entries.
Next, the row and column sums of $\Gamma$ are
\begin{align*}
    \Gamma\bbm1 &= \Lambda\bbm1 + \frac{b^\intercal \bbm1}{\rho}a = -a + a = 0\\
\Gamma^\intercal\bbm1 &= \Lambda^\intercal\bbm1 + \frac{a^\intercal \bbm1}{\rho}b = -b + b = 0.
\end{align*}

Now, let $(y_0,\bs)$ be arbitrary and let $\bx, \by, \bg$ denote the associated data.
We will shift this trajectory's starting point while leaving $\bs$ unchanged. Set
\begin{align*}
    \delta = -\frac{b^\intercal \bx}{\rho},\quad
    y_0' = y_0 + \delta,\quad
    \bs' = \bs.
\end{align*}
Let $\bx',\by',\bg'$ denote the data associated with the starting point $y_0'$ and oracle responses $\bs'$.
It holds that
\begin{align*}
    \by' &= \by + \bbm 1\delta,\qquad
    \bx' = \bx + \bbm1 \delta,\qquad
    \bg' = \bg - q\bbm1 \delta.
\end{align*}

Now, as $\Lambda$ is a PEP certificate, it holds that
\begin{align*}
    0&\leq \textup{Slack}^\textup{G,G}_{\tau,\Lambda,\tilde H}(y_0', \bg')\\
    &= 
    \frac{\tau}{2}\norm{s_0'}^2 - \frac{1}{2\tau}\norm{s_N'}^2 + \tr((\bg')^\intercal\Lambda(\bx'))\\
    &= 
    \frac{\tau}{2}\norm{s_0}^2 - \frac{1}{2\tau}\norm{s_N}^2 + \tr((\bg - q\bbm1\delta)^\intercal\Lambda(\bx + \bbm 1\delta))\\
    &= \frac{\tau}{2}\norm{s_0}^2 - \frac{1}{2\tau}\norm{s_N}^2 + \tr(\bg^\intercal\Lambda\bx)
    -\ip{a^\intercal\bg,\delta} +q\ip{\delta, b^\intercal \bx} + q\rho\norm{\delta}^2 
    \\
    &= \frac{\tau}{2}\norm{s_0}^2 - \frac{1}{2\tau}\norm{s_N}^2 + \tr(\bg^\intercal\Lambda\bx)
    +\frac{1}{\rho}\ip{a^\intercal\bg,b^\intercal\bx}.
\end{align*}
This is exactly $\textup{Slack}^\textup{G,G}_{\tau,\Gamma,\tilde H}(y_0, \bg)$.
\end{proof}

\begin{proof}[Proof of \cref{lem:compressed_Lambda_P_Phi}]
Compute
\begin{align*}
        \Lambda_{U}P\Phi_{V} &= U\Lambda \left(U^\intercal P V\right) \Phi \left(V^\intercal P\right) P^{-1}\\
    &= U \Lambda \left(\begin{bmatrix}
    -\bbm1_N & I_N
    \end{bmatrix}\begin{bmatrix}
    0_N^\intercal & 1\\
    \bar P & \bbm 1_N
    \end{bmatrix}\begin{bmatrix}
    I_N\\
    0_N^\intercal
    \end{bmatrix}\right)\Phi \left(\begin{bmatrix}
    I_N & 0_N
    \end{bmatrix} \begin{bmatrix}
    0_N & \bar P\\
    1 & \bbm 1_N^\intercal
    \end{bmatrix}\right)P^{-1}\\
    &= U \Lambda \bar P (\bar P \Lambda \bar P)^{-1} \begin{bmatrix}
    0_N & \bar P
    \end{bmatrix}P^{-1}\\
    &= \begin{bmatrix}
    -\bbm1_N^\intercal\\
    I_N
    \end{bmatrix} \begin{bmatrix}
    0_N & I_N
    \end{bmatrix}P^{-1}\\
    &= (I - e_0\bbm 1^\intercal)P^{-1}.
\end{align*}

For the second identity, we begin by writing
\begin{align*}
    \Lambda_W \bar P_W \Phi_W &= W \Lambda \bar P\Phi W^\intercal= W\bar P^{-1} W^\intercal= \bar P^{-1}_W.
\end{align*}
Next, we note that
\begin{align*}
    \bar S^{-1} \bar R \bar S \bar R &= \bar S^{-1}\bar S^\intercal = \bar S^{-1} \left(\bbm1\bbm1^\intercal - \bar S + I\right)= e_0 \bbm1^\intercal - I + \bar S^{-1}.
\end{align*}

Thus,
\begin{align*}
    \bar P^{-1}_WP &=  \begin{bmatrix}
    \bar R\bar P^{-1} \bar R & \\
    & 0
    \end{bmatrix} \begin{bmatrix}
    0 & \bar P\\
    1 & \bbm1_N^\intercal
    \end{bmatrix}
    =  \begin{bmatrix}
    0 & \bar R \bar P^{-1}\bar R \bar P\\
    0 & 0
    \end{bmatrix}
    = \begin{bmatrix}
    0 & \bar S^{-1}\bar R \bar S \bar R\\
    0 & 0
    \end{bmatrix}\\
    &= \begin{bmatrix}
    0 & 1 & 1 & \dots & 1\\
    0 & -1 & 0 \\
    \vdots & & \ddots & \ddots\\
    0 & & & -1 & 0\\
    0 & & & & 0
    \end{bmatrix}\\
    &= e_0 \bbm1^\intercal - I + E_N.\qedhere
\end{align*}
\end{proof}

\section{Proofs of \cref{lem:DD_duality_blocks,lem:gd_blocks}}
\label{app:deferred_block_identities}

\begin{proof}[Proof of \cref{lem:DD_duality_blocks}]

The first identity is
\begin{align*}
    P^{-1}A^\vee P &=P^{-1}SH^A P = H^\intercal S^\intercal = A^\intercal - \frac{1}{q}e_0 v_H^\intercal.
\end{align*}

The second identity is
\begin{align*}
    P w^\vee &= P (I - (1+q)SH^A)^\intercal e_N = (I - (1+q)P^{-1}SH^A P)^\intercal \bbm1\\
    &=  (I - (1+q)SH) \bbm1\\
    &= \frac{1+q}{q}(I - q SH)\bbm1 - \frac{1}{q}\bbm1\\
    &= b - \frac{1}{q}\bbm1.
\end{align*}

The third identity is
\begin{align*}
    P^{-1}b^\vee &= P^{-1}v_{H^A} = P^{-1}(I - q SH^A)\bbm1 = P^{-1}(I - q SH^A)P e_N \\
    &= (I - q P^{-1}SH^AP)e_N= (I - q SH)^\intercal e_N = w + \eta e_0.
\end{align*}

The fourth identity is
\begin{align*}
    \zeta^\vee = (1+q)\eta_{H^A} = q \left(1 + \frac{1}{q}\right)\eta_H = \zeta.
\end{align*}

The remaining two identities follow from
\begin{align*}
    A\bbm 1 &= \left(SH + \frac{1}{q}v_H e_0^\intercal\right)\bbm1 = SH\bbm1 + \frac{1}{q}(I - q SH)\bbm1= \frac{1}{q}\bbm1\\
    \bbm1^\intercal w &= \bbm1^\intercal\left[(I - q SH)^\intercal e_N - \eta_H e_0\right] = v_H^\intercal e_N  - \eta_H = 0.\qedhere
\end{align*}
\end{proof}

\begin{proof}[Proof of \cref{lem:gd_blocks}]
We will use the following identities
\begin{align*}
    &\bar P_W = \bbm1\bbm1^\intercal - P && \bar P_W^{-1} = (e_0 \bbm1^\intercal - I + E_N)P^{-1}\\
    &\bar P_W^{-1}\bbm1 = e_0 && \bar P_W e_0 = \bbm1 - e_N\\
    & R(I - \bbm1 e_0^\intercal)P = S^\intercal(I-E_N)
\end{align*}
The second identity is \cref{lem:compressed_Lambda_P_Phi} and the remainder are direct calculations.

Now,
\begin{align*}
    SH^A + \frac{1}{q} v_{H^A}e_0^\intercal &= SH^A + \frac{1}{q}(I - qSH^A)\bbm1 e_0^\intercal = SH^A(I - \bbm 1e_0^\intercal) + \frac{1}{q}\bbm1 e_0^\intercal.
\end{align*}

Then,
\begin{align*}
    \bar P_W^{-1}A^\vee \bar P_W &= \bar P_W^{-1}\left(SH^A + \frac{1}{q}v_{H^A}e_0^\intercal\right) \bar P_W\\
    &= (e_0\bbm1^\intercal - I + E_N)P^{-1}\left(SH^A(I - \bbm 1e_0^\intercal) + \frac{1}{q}\bbm1 e_0^\intercal\right)(\bbm1\bbm1^\intercal - P)\\
    &= \left((e_0\bbm1^\intercal - I + E_N)H^\intercal S^\intercal(I-E_N)P^{-1} + \frac{1}{q}e_0 e_0^\intercal\right)(\bbm1\bbm1^\intercal - P)\\
    &= -(e_0\bbm1^\intercal - I + E_N)H^\intercal S^\intercal(I-E_N) + \frac{1}{q}e_0 \bbm1^\intercal (I - E_N)\\
    &= 
    \left[(I-E_N)H^\intercal S^\intercal + \frac{1}{q}e_0 \bbm 1^\intercal \left(I-qH^\intercal S^\intercal\right)\right] (I - E_N)\\
    &=     \left[H^\intercal S^\intercal + \frac{1}{q}e_0 v_H^\intercal \right] (I - E_N) = A^\intercal (I - E_N).
\end{align*}
Here, the second line substitutes for $\bar P_W^{-1}, \bar P_W$; the third line uses $R(I - \bbm1 e_0^\intercal) = S^\intercal (I - E_N)P^{-1}$; the fourth line uses $(I-E_N)P^{-1}\bbm1 = 0$; the final line uses the fact that $SH$ is lower triangular so that $(I- E_N)SHe_N = 0$.

The second identity is
\begin{align*}
    -\bar P_W w^\vee &= 
    -\bar P_W \left[(I - (1+q)SH^A)^\intercal e_N - \frac{1+q}{q}\eta e_0\right]\\
    &= 
    -\bar P_W \left[P^{-1}(I - (1+q)SH) \bbm1  - \frac{1+q}{q}\eta e_0\right]\\
    &= (I - \bbm1 e_N^\intercal) (I - (1+q)SH) \bbm1  + \frac{1+q}{q}\eta (\bbm1 - e_N)\\
    &= (I - \bbm1 e_N^\intercal) \left(\frac{1+q}{q}v_H - \frac{1}{q}\bbm1\right)  + \frac{1+q}{q}\eta (\bbm1 - e_N)\\
    &= \frac{1+q}{q}v_H  - \frac{1+q}{q}\eta e_N\\
    &= b - \frac{1+q}{q}\eta e_N.
\end{align*}
Here, the third line substitutes $\bar P_W$ and $\bar P_W e_0$ and the fifth line uses $\bbm 1\in\ker(I - \bbm1 e_N^\intercal)$.

The third identity is
\begin{align*}
    -\bar P_W^{-1}b^\vee &= - \frac{1+q}{q}\bar P_W^{-1}v_{H^A}
    = - \frac{1+q}{q}\bar P_W^{-1}(I - q SH^A)\bbm1\\
    &= - \frac{1+q}{q}\left(e_0 \bbm1^\intercal - I + E_N\right)P^{-1}(I - q SH^A)P e_N\\
    &= - \frac{1+q}{q}\left(e_0 \bbm1^\intercal - I + E_N\right)(I - q SH)^\intercal e_N\\
    &= - \frac{1+q}{q}\left(\eta e_0 -(I- q SH  - E_N + q SH E_N)^\intercal e_N\right)\\
    &= - \frac{1+q}{q}\left(\eta e_0 -\left(\frac{q}{1+q} I- q SH \right)^\intercal e_N\right)\\
    &= (I - (1+q)SH)^\intercal e_N- \frac{1+q}{q}\eta e_0 = w.
\end{align*}
Here, the second line substitutes $\bar P_W^{-1}$. The fifth lines uses that $SH$ is lower triangular with diagonal entries $\frac{1}{1+q}$, so that $SH E_N = \frac{1}{1+q}E_N$.

The fourth identity is
\begin{align*}
    \zeta^\vee &= (1+q)(1+\frac{1}{q})\eta_{H^A} = (1+q)(1+\frac{1}{q})\eta_{H} = \zeta.
\end{align*}

The remaining identity is
\begin{align*}
    e_N^\intercal w &= e_N^\intercal(I - (1+q)SH)^\intercal e_N = 0,
\end{align*}
where the last identity notes that $SH$ is lower triangular with diagonal entries $\frac{1}{1+q}$. The $e_N^\intercal w^\vee = 0$ proof is identical.
\end{proof}

\section{H-duality for the suboptimality criteria}
\label{app:suboptimality}
For completeness, this appendix extends our dualities between distance and gradient performance criteria to the suboptimality performance criteria. This is the setting in which H-duality was first observed or conjectured~\cite{Kim2024-Hduality,kim2016optimized,kim2021optimizing}.
Accordingly, we restrict our attention to the function setting in this appendix.

Let S denote the ``suboptimality-type'' criterion, so that (D,S) is an initial distance to terminal suboptimality setup and (S,G) is an initial suboptimality to terminal gradient setup.

\begin{definition}
Fix $q\geq 0$, $\tau>0$, and $\tilde H\in\R^{\cI_N\times \cI_N}$ strictly lower triangular. 
Let $\Lambda\in\R^{\cI_N\times \cI_N}$.
Set $\eta = e_N^\intercal v_H$ and define the following lifted slack matrices:
\begin{align*}
    \cL_{\Lambda,\tilde H}^\textup{D,S}&\coloneqq \begin{bmatrix}
    -2\sym(\Lambda SH) & \Lambda v_H & (I - q SH)^\intercal e_N & -qH^\intercal\bbm1 \\
    \cdot& \tau & q\eta & q\eta\\
    \cdot& \cdot& 1 & 0\\
    \cdot& \cdot& \cdot & q
    \end{bmatrix}\\
    \cL_{\Lambda,\tilde H}^\textup{S,G}&\coloneqq \begin{bmatrix}
    -2\sym(\Lambda SH) & \Lambda v_H & (I - q SH)^\intercal e_N & \Lambda\bbm1 \\
    \cdot& 1& q\eta & 1\\
    \cdot& \cdot& \tau &0 \\
    \cdot& \cdot& \cdot & 1+q
    \end{bmatrix},
\end{align*}
where the blocks below the diagonal are defined by symmetry.
We say that $\Lambda\in\R^{\cI_N\times \cI_N}$ is an unsigned certificate of (D,S) (respectively (S,G)) convergence with rate $\tau$ if $\cL_{\Lambda,\tilde H}^\textup{D,S}\succeq 0$ (respectively $\cL_{\Lambda,\tilde H}^\textup{S,G}\succeq 0$).
\end{definition}

\begin{lemma}
Suppose $\Lambda\in\R^{\cI_N\times \cI_N}$ is an unsigned certificate of (D,S) convergence of $\tilde H$ with rate $\tau$. If, in addition, $\Lambda$ has nonnegative off-diagonal entries, $\Lambda\bbm1 \leq 0$, and $\Lambda^\intercal\bbm 1 \leq -e_N$, then 
\begin{align*}
    h_\star - h_N + \frac{\tau}{2}\norm{y_0}^2 \geq 0
\end{align*}
on any trajectory generated by $\tilde H$.
\end{lemma}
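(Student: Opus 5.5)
The plan is to add two nonnegative quantities: one coming from the interpolation inequalities via \cref{lem:aggregated_interpolation}, the other from the PSD condition $\cL_{\Lambda,\tilde H}^\textup{D,S}\succeq 0$. The quadratic terms should cancel, leaving exactly $h_\star - h_N + \frac{\tau}{2}\norm{y_0}^2$. Throughout, we use the normalization $y_\star=0$, $h_\star = f_\star = 0$.

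\textbf{Step 1 (interpolation).} Set $a \coloneqq -\Lambda\bbm1\geq 0$ and $b\coloneqq -\Lambda^\intercal\bbm1 - e_N\geq 0$; both are nonnegative by the sign assumptions. Apply \cref{lem:aggregated_interpolation} in the function setting. Since $\Lambda^\intercal\bbm1 + b = -e_N$ and $f_\star=0$, it yields
\begin{align*}
    -f_N - \tr(\bg^\intercal\Lambda\bx)\geq 0.
\end{align*}

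\textbf{Step 2 (unpacking the lifted slack).} Interpret $\cL_{\Lambda,\tilde H}^\textup{D,S}$ as a quadratic form in $(\bg, y_0, \epsilon_1, \epsilon_2)$, analogously to the proof of \cref{lem:certificateLift}. By \cref{lem:x_identity}, $\bx = -SH\bg + v_H y_0$. Using $e_N^\intercal S = \bbm1^\intercal$, we get
\begin{align*}
    x_N = -\bbm1^\intercal H\bg + \eta y_0,\qquad s_N = g_N + q x_N = e_N^\intercal(I-qSH)\bg + q\eta y_0.
\end{align*}
Thus the third column of $\cL_{\Lambda,\tilde H}^\textup{D,S}$ encodes $\ip{\epsilon_1, s_N}$ with diagonal weight $1$, and the fourth column encodes $q\ip{\epsilon_2, x_N}$ with diagonal weight $q$. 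The first two block rows give $\frac{\tau}{2}\norm{y_0}^2 + \tr(\bg^\intercal\Lambda\bx)$, since $\tr(\bg^\intercal\Lambda\bx) = -\tr(\bg^\intercal\Lambda SH\bg) + \ip{\Lambda v_H y_0,\bg}$. Half the quadratic form therefore equals
\begin{align*}
    \frac{\tau}{2}\norm{y_0}^2 + \tr(\bg^\intercal\Lambda\bx) + \frac12\norm{\epsilon_1}^2 + \ip{\epsilon_1,s_N} + \frac q2\norm{\epsilon_2}^2 + q\ip{\epsilon_2,x_N}.
\end{align*}
Minimizing over $\epsilon_1 = -s_N$ and $\epsilon_2=-x_N$, PSDness implies
\begin{align*}
    \frac{\tau}{2}\norm{y_0}^2 + \tr(\bg^\intercal\Lambda\bx) - \frac12\norm{s_N}^2 - \frac q2\norm{x_N}^2\geq 0
\end{align*}
for every $(y_0,\bg)$, and in particular along any trajectory of $\tilde H$.

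\textbf{Step 3 (combine).} Add the inequalities of Steps 1 and 2. The $\tr(\bg^\intercal\Lambda\bx)$ terms cancel, and by definition of $\bf$, $f_N + \frac12\norm{s_N}^2 + \frac q2\norm{x_N}^2 = h_N$. This leaves $\frac\tau2\norm{y_0}^2 - h_N\geq 0$, which is the claim since $h_\star=0$.

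The main obstacle is Step 2: one must carefully verify that the blocks of $\cL^\textup{D,S}$ match this quadratic form. In particular, the $(2,3)$ and $(2,4)$ entries $q\eta$ must be matched with the $y_0$-parts of $s_N$ and $q x_N$, and the fourth column $-qH^\intercal\bbm1$ with the $\bg$-part of $q x_N$. The factor-of-$\frac12$ convention must also be tracked consistently with \cref{lem:certificateLift}.
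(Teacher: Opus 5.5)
Your proof is correct and matches the paper's argument. Both apply \cref{lem:aggregated_interpolation} with $a=-\Lambda\bbm1$ and $b=-\Lambda^\intercal\bbm1-e_N$, reduce $\cL^\textup{D,S}_{\Lambda,\tilde H}\succeq 0$ to $\frac{\tau}{2}\norm{y_0}^2+\tr(\bg^\intercal\Lambda\bx)-\frac12\norm{s_N}^2-\frac q2\norm{x_N}^2\geq 0$, and sum the two inequalities. The only cosmetic difference is that you evaluate the quadratic form at $\epsilon_1=-s_N$, $\epsilon_2=-x_N$ instead of taking the Schur complement of the bottom-right $2\times 2$ block, which handles $q=0$ directly without the paper's footnote.
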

\begin{proof}
Let $\eta = e_N^\intercal v_H$.
Applying 
\cref{lem:aggregated_interpolation} with $a = -\Lambda\bbm1$ and $b= -\Lambda^\intercal\bbm1 - e_N$ gives
\begin{align*}
    f_\star - f_N - \tr(\bg^\intercal \Lambda \bx) \geq 0.
\end{align*}

Note that 
    $s_N = e_N^\intercal(I-qSH)\bg + q\eta y_0$ 
    and
    $x_N = -\bbm1^\intercal H \bg + \eta y_0$.
Thus, letting $\cQ$ denote the Schur complement\footnote{In the case $q = 0$, one should first drop the row and column of the matrix before taking the Schur complement.} of $\cL_{\Lambda,\tilde H}^\textup{D,S}$ with respect to the bottom-right $2\times 2$ block gives
\begin{align*}
    \frac{1}{2}\tr\left(\begin{bmatrix}
    \bg\\
    y_0
    \end{bmatrix}^\intercal\cQ \begin{bmatrix}
    \bg\\
    y_0
    \end{bmatrix}\right) = 
    \tr(\bg^\intercal \Lambda\bx) + 
    \frac{\tau}{2}\norm{y_0}^2 - \frac{1}{2}\norm{s_N}^2 - \frac{q}{2}\norm{x_N}^2.
\end{align*}
This is nonnegative by the assumption that $\cL_{\Lambda,\tilde H}^\textup{D,S}\succeq 0$.

Summing the nonnegative displays completes the proof.
\end{proof}

\begin{lemma}
Suppose $\Lambda\in\R^{\cI_N\times \cI_N}$ 
is an unsigned certificate of (S,G) convergence of $\tilde H$ with rate $\tau$. If, in addition, $\Lambda$ has nonnegative off-diagonal entries, $\Lambda\bbm1 \leq 0$, $(\Lambda^\intercal\bbm 1)_i\leq 0$ for $i=1,\dots,N$, 
and $\sum_{i=1}^N (-\Lambda^\intercal\bbm1)_i \leq 1$,
then
\begin{align*}
     h_0 - h_\star  - \frac{1}{2\tau}\norm{s_N}^2 - \frac{1}{2(1+q)}\norm{\bbm1^\intercal\Lambda^\intercal\bg +g_0}^2\geq 0
\end{align*}
on any trajectory generated by $\tilde H$.
\end{lemma}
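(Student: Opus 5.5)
The proof follows the (D,S) lemma just above. First, aggregate interpolation inequalities using \cref{lem:aggregated_interpolation} so that only $f_0-f_\star$ survives, together with $-\tr(\bg^\intercal\Lambda\bx)$. Then use $\cL^\textup{S,G}_{\Lambda,\tilde H}\succeq 0$ to control the remaining quadratic terms.

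\emph{Step 1: choice of multipliers.} Apply \cref{lem:aggregated_interpolation} with
\begin{align*}
a = -\Lambda\bbm1 \qquad\text{and}\qquad b = -\Lambda^\intercal\bbm1 + e_0 .
\end{align*}
Then $a\geq 0$ because $\Lambda\bbm1\leq 0$, and $b_i = -(\Lambda^\intercal\bbm1)_i\geq 0$ for $i\geq 1$. For $b_0$, write
\begin{align*}
(\Lambda^\intercal\bbm1)_0 = \bbm1^\intercal\Lambda\bbm1 + \sum_{i=1}^N(-\Lambda^\intercal\bbm1)_i \leq 0 + 1,
\end{align*}
using $\bbm1^\intercal\Lambda\bbm1 = \bbm1^\intercal(\Lambda\bbm1)\leq 0$ and the hypothesis $\sum_{i\ge1}(-\Lambda^\intercal\bbm1)_i\le 1$. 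Hence $b_0 = 1-(\Lambda^\intercal\bbm1)_0\geq 0$. With these choices $\Lambda^\intercal\bbm1 + b = e_0$ and $\bbm1^\intercal(a-b) = -1$. The lemma therefore gives
\begin{align*}
f_0 - f_\star - \tr(\bg^\intercal\Lambda\bx)\geq 0.
\end{align*}
Since $f_\star=h_\star$ and $f_0 = h_0 - \tfrac12\norm{s_0}^2-\tfrac q2\norm{x_0}^2$, the claim reduces to showing that
\begin{align*}
\tr(\bg^\intercal\Lambda\bx) + \tfrac12\norm{s_0}^2 + \tfrac q2\norm{x_0}^2 - \tfrac{1}{2\tau}\norm{s_N}^2 - \tfrac{1}{2(1+q)}\norm{\bbm1^\intercal\Lambda^\intercal\bg + g_0}^2
\end{align*}
is nonnegative for all $(y_0,\bg)$.

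\emph{Step 2: reading off the lifted matrix.} Order the variables of $\cL^\textup{S,G}_{\Lambda,\tilde H}$ as $(\bg, z, \epsilon, \delta)$, where $z$ is the initial-point variable paired with the column $\Lambda v_H$, and $\epsilon,\delta$ are auxiliary variables. The role of each block is as follows.
\begin{itemize}
\item The $\epsilon$ block, with diagonal entry $\tau$ and off-diagonal entries $(I-qSH)^\intercal e_N$ and $q\eta$, encodes $\ip{\epsilon,s_N}+\tfrac\tau2\norm{\epsilon}^2$. This uses $s_N = e_N^\intercal(I-qSH)\bg + q\eta y_0$, and minimizing over $\epsilon$ yields $-\tfrac1{2\tau}\norm{s_N}^2$.
\item The $\delta$ block, with diagonal entry $1+q$ and off-diagonal entries $\Lambda\bbm1$ and $1$, encodes $\ip{\delta,\bbm1^\intercal\Lambda^\intercal\bg + z} + \tfrac{1+q}{2}\norm{\delta}^2$. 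Minimizing over $\delta$ yields the last negative term.
\item The $\bg$ block $-2\sym(\Lambda SH)$ and the column $\Lambda v_H$ encode $\tr(\bg^\intercal\Lambda\bx)$, via $\bx=-SH\bg+v_H y_0$ from \cref{lem:x_identity}.
\end{itemize}
Taking the Schur complement with respect to the bottom-right $2\times 2$ (diagonal) block, exactly as in the preceding lemma, gives a PSD form $\cQ$ in $(\bg,z)$. The remaining task is an algebraic identity: $\tfrac12\tr\bigl([\bg;z]^\intercal\cQ[\bg;z]\bigr)$ equals the display in Step 1, for the correct identification of $z$ in terms of $(y_0,\bg)$.

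\emph{Step 3 and main obstacle.} I expect the main obstacle to be pinning down that identification. The diagonal entry $1$ for $z$, the cross term $1$ between $z$ and $\delta$, and the cross term $q\eta$ between $z$ and $\epsilon$ must together reproduce $\tfrac12\norm{s_0}^2+\tfrac q2\norm{x_0}^2$ and the $g_0$ inside the last norm. My first attempt is $z=y_0$, expanding with $y_0=(1+q)x_0+g_0$ and $s_0=g_0+qx_0$. The point to check is that the leftover cross terms between $g_0$ and $x_0$ are exactly absorbed by completing the square in $\delta$ (this is why the coefficient $\tfrac1{1+q}$ appears). If this does not close, I would instead try a rescaled $z$ (for example a multiple of $x_0$ plus $g_0$) so that the entries $1$, $q\eta$ and $\Lambda v_H$ match. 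Once the identity holds, $\cQ\succeq0$ implies the reduced inequality. Adding it to the inequality from Step 1 completes the proof.
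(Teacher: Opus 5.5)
Your Step 1 is a valid use of \cref{lem:aggregated_interpolation}, and your reading of $\cL^\textup{S,G}_{\Lambda,\tilde H}$ with $z=y_0$ is correct. The two pieces do not fit together, however, and the obstacle you flag in Step 3 is a genuine failure. Put $u=\bbm{1}^\intercal\Lambda^\intercal\bg$. With $z=y_0$ the Schur complement form is $\tr(\bg^\intercal\Lambda\bx)+\frac12\norm{y_0}^2-\frac{1}{2\tau}\norm{s_N}^2-\frac{1}{2(1+q)}\norm{u+y_0}^2$. Substituting $y_0=g_0+(1+q)x_0$ shows that this equals your reduced display \emph{minus} $\ip{u,x_0}$. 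Completing the square in $\delta$ absorbs the $g_0$--$x_0$ cross terms, but not the $u$--$x_0$ one.

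No other identification of $z$ can repair this, because your reduced claim is false. Take $N=0$, $q=0$, $\Lambda=[-1]$, $\tau=1$. Every sign hypothesis holds, and in coordinates $(t_1,t_2,t_3,t_4)$ ordered as $(\bg,z,\epsilon,\delta)$ the quadratic form of $\cL^\textup{S,G}_{\Lambda,\tilde H}$ is $(t_1+t_3)^2+(t_1-t_2-t_4)^2\geq 0$. Yet your display equals $-\ip{g_0,x_0}=-\ip{g_0,y_0-g_0}$, which is negative at $y_0=2g_0$. So your Step 1 inequality together with $\cQ\succeq 0$ only proves the claim with an extra $+\ip{u,x_0}$ on the left-hand side.

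The missing idea is in the multipliers. The row-sum weights $-\Lambda\bbm{1}$ must go on the interpolation inequalities between $i$ and the initial point $0$, not between $i$ and $\star$. Concretely, apply \cref{lem:aggregated_interpolation} to $\tilde\Lambda=\Lambda-\Lambda\bbm{1}e_0^\intercal$, with $a=0$ and $b=e_0-\tilde\Lambda^\intercal\bbm{1}$. This $\tilde\Lambda$ still has nonnegative off-diagonal entries and satisfies $\tilde\Lambda\bbm{1}=0$. Then $b_i=(-\Lambda^\intercal\bbm{1})_i\geq 0$ for $i\geq 1$, and $b_0=1-\sum_{i=1}^N(-\Lambda^\intercal\bbm{1})_i\geq 0$. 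This is where that hypothesis is used in full; your choice only used its weaker consequence $(\Lambda^\intercal\bbm{1})_0\leq 1$. The lemma now gives
\begin{align*}
    f_0-f_\star-\tr(\bg^\intercal\Lambda\bx)+\ip{u,x_0}\geq 0.
\end{align*}
Its extra term cancels exactly the residual $-\ip{u,x_0}$ of the Schur complement with $z=y_0$, and summing the two inequalities yields the claim. This is how the paper argues.
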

\begin{proof}
Let $\eta = e_N^\intercal v_H$, $\alpha = -\Lambda\bbm1$ and $\beta = -\Lambda^\intercal\bbm1$. By assumption, $\alpha\geq 0$, $\beta_1,\dots,\beta_N\geq 0$ and $\sum_{i=1}^{N}\beta_i\leq 1$.

Set $\tilde \Lambda = \Lambda + \alpha e_0^\intercal$. Note that $\tilde\Lambda$ has nonnegative off-diagonal entries, $\tilde\Lambda\bbm 1 = 0$ and $\tilde\Lambda^\intercal\bbm 1 = \Lambda^\intercal\bbm1 - (\bbm1^\intercal\Lambda\bbm1)e_0$.
Set
\begin{align*}
    b \coloneqq e_0 - \tilde\Lambda^\intercal\bbm1 = (1- \sum_{i=0}^N \beta_i) e_0 + \beta = \begin{bmatrix}
    1- \sum_{i=0}^N\beta_i + \beta_0\\
    \beta_1\\
    \vdots\\
    \beta_N
    \end{bmatrix}\geq 0
\end{align*}

Apply \cref{lem:aggregated_interpolation} with $\tilde\Lambda$, $a=0$ and $b$ to get
\begin{align*}
    f_0 - f_\star - \tr(\bg^\intercal(\Lambda - \Lambda\bbm1e_0^\intercal)\bx)\geq 0.
\end{align*}

Note that 
    $s_N = e_N^\intercal(I-qSH)\bg + q\eta y_0$.
Thus, the Schur complement $\cQ$ of $\cL_{\Lambda,\tilde H}^\textup{S,G}$ with respect to the bottom-right $2\times 2$ block satisfies
\begin{align*}
    \frac{1}{2}\tr\left(\begin{bmatrix}
    \bg\\
    y_0
    \end{bmatrix}^\intercal\cQ \begin{bmatrix}
    \bg\\
    y_0
    \end{bmatrix}\right) = 
    \tr(\bg^\intercal \Lambda\bx) + 
    \frac{1}{2}\norm{y_0}^2 - \frac{1}{2\tau}\norm{s_N}^2 - \frac{1}{2(1+q)}\norm{\bbm1^\intercal\Lambda^\intercal\bg +y_0}^2.
\end{align*}
This is nonnegative by the assumption that $\cL_{\Lambda,\tilde H}^\textup{S,G}\succeq 0$.

Summing the nonnegative displays gives
\begin{align*}
   f_0 - f_\star +\ip{\bbm1^\intercal\Lambda^\intercal\bg,x_0} + 
    \frac{1}{2}\norm{y_0}^2 - \frac{1}{2\tau}\norm{s_N}^2 - \frac{1}{2(1+q)}\norm{\bbm1^\intercal\Lambda^\intercal\bg +y_0}^2\geq 0.
\end{align*}
Now, using $y_0 = s_0 + x_0 = g_0 + (1+q)x_0$, we can rewrite the LHS as
\begin{align*}
   &f_0 - f_\star + 
    \frac{1}{2}\norm{s_0}^2
    + 
    \frac{q}{2}\norm{x_0}^2
    - \frac{1}{2\tau}\norm{s_N}^2 - \frac{1}{2(1+q)}\norm{\bbm1^\intercal\Lambda^\intercal\bg +g_0}^2\\
    &\qquad = h_0 - h_\star  - \frac{1}{2\tau}\norm{s_N}^2 - \frac{1}{2(1+q)}\norm{\bbm1^\intercal\Lambda^\intercal\bg +g_0}^2.\qedhere
\end{align*}
\end{proof}

\begin{theorem}
    \label{thm:DS}
    Suppose $q\geq 0$.
Let $\tilde H\in\R^{\cI_N\times \cI_N}$ be strictly lower triangular and let $\tau>0$. Let $\Lambda\in\R^{\cI_N\times \cI_N}$ be invertible and define
\begin{align*}
    \Phi \coloneqq (P\Lambda P)^{-1}\qquad\text{and}\qquad
    \cR \coloneqq \begin{bmatrix}
    \Lambda P\\
    & 0 & 1 & 0\\
     & 1 & 0 & 0\\
     & 1 & 0 & -1
    \end{bmatrix}
\end{align*}
Then,
\begin{align*}
    \cL_{\Lambda,\tilde H}^\textup{D,S} = \cR \cL_{\Phi,\tilde H^A}^\textup{S,G}\cR^\intercal.
\end{align*}
\end{theorem}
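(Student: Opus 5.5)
We will verify the identity $\cL_{\Lambda,\tilde H}^\textup{D,S} = \cR \cL_{\Phi,\tilde H^A}^\textup{S,G}\cR^\intercal$ block by block, as in the proof of \cref{thm:DG}. Write $\cL_2=\cL_{\Phi,\tilde H^A}^\textup{S,G}$, with blocks indexed by $(\bg, y_0, \text{slot }3, \text{slot }4)$. Since $\cR$ is block diagonal with first block $\Lambda P$ and a $3\times 3$ lower block $T=\left[\begin{smallmatrix}0&1&0\\1&0&0\\1&0&-1\end{smallmatrix}\right]$, the product splits into three parts. The first is the $(1,1)$ block $\Lambda P(-2\sym(\Phi SH^A))P\Lambda^\intercal$. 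The second is the off-diagonal row $\Lambda P[\,\Phi v_{H^A},\ (I-qSH^A)^\intercal e_N,\ \Phi\bbm1\,]T^\intercal$. The third is the scalar block $T\,C\,T^\intercal$, where $C$ is the bottom-right $3\times3$ block of $\cL_2$.

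\textbf{Step 1: the $(1,1)$ block.} We use $\Lambda P\Phi = P^{-1}$ together with $P^{-1}SH^AP = H^\intercal S^\intercal$, which is the first identity in the proof of \cref{lem:dg_block_calculations}. This gives
\[
-2\sym\left(\Lambda P\Phi SH^A P\Lambda^\intercal\right) = -2\sym\left(H^\intercal S^\intercal\Lambda^\intercal\right) = -2\sym(\Lambda SH),
\]
exactly as in \cref{thm:DG}.

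\textbf{Step 2: the off-diagonal row.} Post-multiplying by $T^\intercal$ sends the three columns $(c_2,c_3,c_4)$ to $(c_3,\ c_2,\ c_2-c_4)$. We compute each $\Lambda P c_i$.

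For $c_2$: $\Lambda P\Phi v_{H^A} = P^{-1}v_{H^A} = (I-qSH)^\intercal e_N$, by the third identity of \cref{lem:dg_block_calculations}.

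For $c_3$: $\Lambda P(I-qSH^A)^\intercal e_N = \Lambda v_H$, by the second identity there.

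For $c_4$: $\Lambda P\Phi\bbm1 = P^{-1}\bbm1 = e_0$, since $Pe_0=\bbm1$.

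The new columns are therefore $\Lambda v_H$, then $(I-qSH)^\intercal e_N$, then $(I-qSH)^\intercal e_N - e_0$. The first two match the D,S blocks directly. The last must equal $-qH^\intercal\bbm1$. Using $S^\intercal e_N=\bbm1$, this reduces to checking that $e_0^\intercal$ equals the last row $e_N^\intercal$ of $I$, which is false as stated. This is the delicate point. I expect the correct fact to be $P^{-1}e_N$ or a similar vector, and it must be checked carefully whether $\Phi\bbm1$ and $Pe_0$ are being used correctly. A cleaner route is to note that $(I-qSH)^\intercal e_N - \text{(column 4 image)}$ should reproduce $-qH^\intercal S^\intercal e_N$. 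So I will recompute $\Lambda P\Phi\bbm1 = P^{-1}\bbm1$ explicitly from $P=SR$: we have $P^{-1}\bbm1 = RS^{-1}\bbm1 = Re_0 = e_N$. This resolves the issue, since $(I-qSH)^\intercal e_N - e_N = -qH^\intercal S^\intercal e_N = -qH^\intercal\bbm1$.

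\textbf{Step 3: the scalar block.} Here $C=\left[\begin{smallmatrix}1&q\eta&1\\ q\eta&\tau&0\\1&0&1+q\end{smallmatrix}\right]$ with $\eta=e_N^\intercal v_{H^A}=e_N^\intercal v_H$ by \cref{lem:eN_vH}. A direct $3\times3$ multiplication gives:
\begin{itemize}
\item $(TCT^\intercal)_{11}=\tau$, $(TCT^\intercal)_{12}=q\eta$, and $(TCT^\intercal)_{13}=q\eta-0=q\eta$;
\item $(TCT^\intercal)_{22}=1$ and $(TCT^\intercal)_{23}=1-1=0$;
\item $(TCT^\intercal)_{33}=1-2+(1+q)=q$.
\end{itemize}
These are exactly the lower $3\times3$ entries of $\cL^\textup{D,S}_{\Lambda,\tilde H}$.

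Symmetry then completes the identity. The main obstacle is the careful bookkeeping of the $P$-identities in Step 2, in particular the evaluation $P^{-1}\bbm1=e_N$.
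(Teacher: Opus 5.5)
Your proposal is correct and follows the paper's proof: a block-by-block check in which the (1,1)--(1,3) blocks reuse the (D,G) identities via $\Lambda P\Phi = P^{-1}$, the (1,4) block reduces to $P^{-1}(v_{H^A}-\bbm1) = -qH^\intercal\bbm1$, and the bottom-right $3\times 3$ block is a direct multiplication. In a final write-up, delete the false start claiming $Pe_0=\bbm1$ (in fact $Pe_0=e_N$ and $Pe_N=\bbm1$) and keep only the corrected computation $P^{-1}\bbm1 = RS^{-1}\bbm1 = e_N$.
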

\begin{proof}

Throughout this proof, abbreviate $\cL_1 = \cL_{\Lambda,\tilde H}^\textup{D,S}$ and $\cL_2 = \cL_{\Phi,\tilde H^A}^\textup{S,G}$ and set $\eta = e_N^\intercal v_H = e_N^\intercal v_{H^A}$.
Below, we verify that $\cL_1 = \cR \cL_2\cR^\intercal$.

The (1,1), (1,2), and (1,3) blocks of $\cR \cL_2\cR^\intercal$ are exactly the (1,1), (1,2), and (1,3) blocks of $\cR \cL_2\cR^\intercal$ in the (D,G)--(D,G) setting (see \cref{thm:DG}). Thus, the proof of \cref{thm:DG} gives
\begin{align*}
    -2\Lambda P \sym(\Phi S H^A)P\Lambda^\intercal &= -2\sym(H^\intercal S^\intercal\Lambda^\intercal),\\
    \Lambda P (I - qSH^A)^\intercal e_N &= \Lambda v_H,\\
\Lambda P \Phi v_{H^A} &= (I - q SH)^\intercal e_N.
\end{align*}

The (1,4) block of $\cR\cL_2\cR^\intercal$ is
\begin{align*}
    \Lambda P \Phi(v_{H^A} - \bbm1) &= P^{-1}\left(I - q SH^A - I\right)\bbm1\\
    &= -q P^{-1} SH^A \bbm1\\
    &= -q P^{-1} PH^\intercal R\bbm1\\
    &= -q H^\intercal\bbm1.
\end{align*}

Finally, the bottom-right $3\times 3$ block is
\begin{align*}
    \begin{bmatrix}
    0 & 1 & 0\\
    1 & 0 & 0\\
    1 & 0 & -1
    \end{bmatrix}
    \begin{bmatrix}
    1& q\eta & 1\\
    q\eta& \tau & \\
    1 & & 1+q
    \end{bmatrix}\begin{bmatrix}
    0 & 1 & 1\\
    1 & 0 & 0\\
    0 & 0 & -1
    \end{bmatrix} &= \begin{bmatrix}
    \tau & q\eta &q\eta\\
        q\eta & 1 & 0\\
    q\eta & 0 & q
    \end{bmatrix}.\qedhere
\end{align*}
\end{proof}

\end{document}